\newcommand{\E}{\ensuremath{\mathbb{E}}}
\newcommand{\B}{\mathrm{B}} %Bounded operators
\newcommand{\I}{\mathrm{I}} %matrice identité
\newcommand{\II}{\mathrm{II}} %type II
\newcommand{\K}{\ensuremath{\mathbb{K}}}
\let\L\relax %\L ne fait plus rien !
\newcommand{\L}{\mathrm{L}}
\let\O\relax
\newcommand{\O}{\ensuremath{\mathbb{O}}}
\newcommand{\M}{\mathrm{M}}
\newcommand{\R}{\ensuremath{\mathbb{R}}}
\newcommand{\C}{\ensuremath{\mathbb{C}}}
\newcommand{\W}{\mathrm{W}}
\newcommand{\kR}{\mathfrak{R}}
\newcommand{\Zc}{\mathrm{Z}}
\newcommand{\can}{\mathrm{can}}
\newcommand{\Id}{\mathrm{Id}}
\newcommand{\JW}{\mathrm{JW}}
\newcommand{\JB}{\mathrm{JB}}
\newcommand{\JBW}{\mathrm{JBW}}
\newcommand{\JC}{\mathrm{JC}}
\renewcommand{\leq}{\ensuremath{\leqslant}}
\renewcommand{\geq}{\ensuremath{\geqslant}}
\newcommand{\qed}{\hfill \vrule height6pt  width6pt depth0pt}
\newcommand{\norm}[1]{ \| #1  \|}
\newcommand{\bnorm}[1]{ \big\| #1  \big\|}
\newcommand{\co}{\colon}
\newcommand{\ot}{\otimes}
\newcommand{\ovl}{\overline}
\newcommand{\otvn}{\ovl\ot}
\newcommand{\D}{\mathrm{D}}
\newcommand{\sa}{\mathrm{sa}}
\newcommand{\Sym}{\mathrm{Sym}} %matrices symetriques
\newcommand{\Asym}{\mathrm{Asym}} %matrices antisymetriques
\let\i\relax %\i ne fait plus rien !
\newcommand{\i}{\mathrm{i}}
\newcommand{\ov}{\overset}
\renewcommand{\d}{\mathop{}\mathopen{}\mathrm{d}} %opérateur différentiel
\newcommand{\e}{\mathrm{e}} %constante e
\renewcommand{\d}{\mathop{}\mathopen{}\mathrm{d}} 
\let\cal\relax
\newcommand{\cal}{\mathcal}
\DeclareMathOperator{\tr}{Tr} %trace
\DeclareMathOperator{\Ran}{Ran} %range
\let\Re\relax %\Re ne fait plus rien
\DeclareMathOperator{\Re}{Re} %partie réelle
\DeclareMathOperator{\card}{card} %cardinal
\newtheorem{thm}{Theorem}[section]
\newtheorem{defi}[thm]{Definition}
\newtheorem{prop}[thm]{Proposition}
\newtheorem{conj}[thm]{Conjecture}
\newtheorem{cor}[thm]{Corollary}
\newtheorem{lemma}[thm]{Lemma}
\newtheorem{remark}[thm]{Remark}
\newtheorem{example}[thm]{Example}
\newenvironment{proof}[1][]{\noindent {\it Proof #1} : }{\hbox{~}\qed
\smallskip
}
\numberwithin{equation}{section}
\let\OLDthebibliography\thebibliography
\renewcommand\thebibliography[1]{
  \OLDthebibliography{#1}
  \setlength{\parskip}{0pt}
  \setlength{\itemsep}{0pt plus 0.3ex}
}
\begin{document}
\selectlanguage{english}
\title{\bfseries{Positive contractive projections on noncommutative $\L^p$-spaces and nonassociative $\L^p$-spaces}}
\date{}
\author{\bfseries{C\'edric Arhancet}}

\maketitle

%%%%%%%%%%%%%%%%%%%%%%%%%%%%%%%%%%%%%%%%%%%%%%%%%%%%%%%%%%%%%%%%
%%%%%%%%%%%%%%%%%%%%%%%%%%%%%%%%%%%%%%%%%%%%%%%%%%%%%%%%%%%%%%%%
\begin{abstract}
We continue our investigation of contractive projections on noncommutative $\L^p$-spaces where $1 < p < \infty$  started in \cite{ArR19}. We improve the results of  \cite{ArR19} and we characterize precisely the positive contractive projections on a noncommutative $\L^p$-space associated with a $\sigma$-finite von Neumann algebra. We connect this topic to the theory of $\JW^*$-algebras. More precisely, in large cases, we are able to show that the range of a positive contractive projection is isometric to a nonassociative $\L^p$-space associated to a $\JW^*$-algebra.
\end{abstract}
%%%%%%%%%%%%%%%%%%%%%%%%%%%%%%%%%%%%%%%%%%%%%%%%%%%%%%%%%%%%%%%%
%%%%%%%%%%%%%%%%%%%%%%%%%%%%%%%%%%%%%%%%%%%%%%%%%%%%%%%%%%%%%%%%

\makeatletter
 \renewcommand{\@makefntext}[1]{#1}
 \makeatother
 \footnotetext{
%This work is partially supported by ??.\\
%ANR Project OSQPI (ANR-11-BS01-0008)2010 
\noindent {\it 2010 Mathematics subject classification:}
Primary 46L51, 46B20, 47B99.
%  	47B99   	Operator theory	None of the above, but in this section
%46B20 Geometry and structure of normed linear spaces
%46L07. %  46L51  Noncommutative measure and integration.%primary Secondary, 46L51 %  46M35       Abstract interpolation of topological vector spaces [See also 46B70]
% 46L07       Operator spaces and completely bounded maps [See also 47L25]
% 43A22 Homomorphisms and multipliers of function spaces on groups, semigroups, etc.
% 43A15 Lp-spaces and other function spaces on groups, semigroups, etc.
\\
{\it Key words and phrases}: noncommutative $\L^p$-spaces, projections, complemented subspaces, $\JW$-algebras, nonassociative $\L^p$-spaces.}%noncommutative $L^p$-spaces, Fourier
%multipliers, Schur multipliers,

{
  \hypersetup{linkcolor=blue}
\tableofcontents
}

%%%%%%%%%%%%%%%%%%%%%%%%%%%%%%%%%%%%%%%%%%%%%%%%%%%%%%%%%%%%%%%%%%%%%%%%%%%%%%%%%%%%%%%%%%%%%%%%%%%%%%%%%%%%%%%%%%%%%%%%%%%%%%%%%%%%%%%%%%%%%%%%%%%%%%
\section{Introduction}
%%%%%%%%%%%%%%%%%%%%%%%%%%%%%%%%%%%%%%%%%%%%%%%%%%%%%%%%%%%%%%%%%%%%%%%%%%%%%%%%%%%%%%%%%%%%%%%%%%%%%%%%%%%%%%%%%%%%%%%%%%%%%%%%%%%%%%%%%%%%%%%%%%%%%%

A classical and old topic of Banach space theory is the investigation of the structure of projections and complemented subspaces. Note that a bounded operator $P \co X \to X$ on a Banach space $X$ is a projection if $P^2=P$ and a subspace $Y$ of $X$ is contractively complemented if it is the range of a contractive linear projection. It is known that a subspace of a smooth Banach space $X$ can be the range of at most one projection of norm one, see \cite[Theorem 6]{CoS1}. %\cite[Theorem 5.5]{Ran01} 
We refer to the surveys \cite{Ran01} and \cite{Mos06} for the state of the art in this area.

Suppose $1 \leq p <\infty$. For example, the contractively complemented subspaces of a classical (=commutative) $\L^p$-space $\L^p(\Omega)$ are all isometrically isomorphic to a $\L^p$-space. This is a classical result of Ando \cite{And66} (for a finite measure space), see also \cite[Theorem 3 p.~162]{Lac74}. See \cite{Tza69}, \cite{BeL74} and references therein for the case of general measures. This work also highlights the link between contractive projections and weighted conditional expectations. Note that Douglas previously investigated the particular case $p=1$ in \cite{Dou65}. Furthermore, a subspace $Y$ of a $\L^p$-space is the range of a positive contractive projection if and only if $Y$ is order isometric to some $\L^p$-space, see \cite[Problem 5.4.1]{AbA02} and \cite[Theorem 4.10]{Ran01}.

The noncommutative analogue of the $\L^p$-space $\ell^p$ is the Schatten space $S^p\ov{\mathrm{def}}{=} \{x \co \ell^2 \to \ell^2 : \norm{x}_p \ov{\mathrm{def}}{=} \bigl(\tr(\vert x\vert^p)\bigr)^{\frac{1}{p}}<\infty\}$. On this Banach space, the situation is much more complicated. It is easy to see that the range of a contractive projection $P \co S^p \to S^p$ on the Schatten space $S^p$ is not necessarily isometric to a Schatten space if $p \not= 2$. For example if $\sigma \co S^p \to S^p$ denotes the transpose map then it is clear that the map $P \ov{\mathrm{def}}{=}\frac{1}{2}(\Id_{S^p} +\sigma) \co S^p \to S^p$ is a positive contractive projection on the subspace of symmetric matrices of $S^p$, in sharp contrast with the setting of classical $\L^p$-spaces of measure spaces.

Nevertheless, the complete description and structure of contractively complemented subspaces of $S^p$ was achieved by Arazy and Friedman in their famous and impressive memoirs \cite{ArF78} and \cite{ArF92} for any $1 \leq p \leq \infty$. Such a subspace is isometrically isomorphic to a $\ell^p$-sum of $S^p$-Cartan factors of type I-IV. Recall that Cartan factors of this type are rectangular operators spaces, spaces of antisymmetric operators, spaces of symmetric operators and complex spin factors. 

In \cite[p.~99]{ArF92}, the authors explicitly introduce the problem to determine the structure of contractively complemented subspaces of noncommutative $\L^p$-spaces. Indeed, the case $p=1$ was already solved in \cite{FrB85} by Friedman and Russo. They proved that the range of contractive projection on a noncommutative $\L^1$-space (=predual of von Neumann algebra) is isometric to the predual of a $\JW^*$-triple, that is a weak* closed subspace of the space $\B(H,K)$ of bounded operators between Hilbert spaces $H$ and $K$ which is closed under the triple product $(x,y,z) \mapsto xy^*z+zy^*x$. Moreover, in \cite{NO02}, the authors gives the description of \textit{completely} contractively complemented subspaces of noncommutative $\L^1$-spaces. Theses spaces coincide with the preduals of $\W^*$-ternary rings of operators. Using the precise description of contractively complemented subspaces of $S^p$ given by Arazy and Friedman, it is showed in \cite[Theorem 1.1]{LRR09} that any \textit{completely} 1-complemented subspaces of $S^p$ is isometric to a direct sum of spaces of the form $S^p(H,K)$, where $H$ and $K$ are Hilbert spaces.

Almost forty years after the publication of the memoir \cite{ArF92}, it is proved in \cite{ArR19} that the range of a 2-positive contractive projection on an arbitrary noncommutative $\L^p$-space is completely order and completely isometrically isomorphic to some noncommutative $\L^p$-space. This result can be used with completely positive contractive projections. Furthermore, a description of 2-positive contractive projections is provided and even new for Schatten spaces. We can see a 2-positive contractive projection as some kind weighted conditional expectation, which remembers the situation of classical $\L^p$-spaces. This entails that a 2-positive contractive projection is necessarily completely positive. The approach is unrelated to the methods of Arazy and Friedman and relies on a two-sided change of density and a lifting argument of the projection at the level $p=\infty$. The use of \textit{non-tracial} Haagerup's noncommutative $\L^p$-spaces is crucial even for the case of Schatten spaces. 

The goal of this paper is to investigate the case of positive contrative projections, which are not necessarily 2-positive. Our first result is an improvement of the main result of \cite{ArR19}. It characterizes and describe precisely the positive contractive projections on noncommutative $\L^p$-spaces. 

\begin{thm}
\label{main-th-ds-intro-bis}
Let $\cal{M}$ be a $\sigma$-finite von Neumann algebra. Suppose $1 < p<\infty$. A bounded map $P \co \L^p(\cal{M}) \to \L^p(\cal{M})$ is a positive contractive projection if and only if there exist a faithful normal state $\varphi$ on $\cal{M}$, a positive element $h \in \L^p(\cal{M},\varphi)$ with support projection $s(h)$ and a faithful normal Jordan conditional expectation $Q \co s(h)\cal{M}s(h) \to s(h)\cal{M}s(h)$ such that
\begin{enumerate} 
	\item for any $y \in \L^p(\cal{M})$ we have $P(y)=P(s(h)ys(h))$,
	\item $s(h)$ belongs to the centralizer of the state $\varphi$,
	\item for any $x \in s(h)\cal{M}s(h)$ we have
	%\footnote{\thefootnote. This means that $Q$ preserves the restriction on $s(h)Ms(h)$ of the state $\frac{1}{\norm{h}_p^p}\tr_{\varphi}(h^p\,\cdot)$.}
$
\tr_{\varphi}(h^p Q(x))
=\tr_{\varphi}(h^p x)$,
\item for any $x \in s(h)\cal{M}s(h)$, we have 
\begin{equation}
\label{relevement-E_h-intro}
P\big(h^{\frac{1}{2}}xh^{\frac{1}{2}}\big)
=h^{\frac{1}{2}} Q(x) h^{\frac{1}{2}}.
\end{equation}
\end{enumerate}
Moreover, in this case $h$ belongs to the range $\Ran P$ of the projection $P$ and $Q(s(h)\cal{M}s(h))$ is a $\JW^*$-subalgebra of the $\JW^*$-algebra $(s(h)\cal{M}s(h),\circ)$.
%we have the following properties.
%\begin{enumerate}
%\item[i.] The element $h$ belongs to the range $\Ran P$ of $P$.
%\item[ii.] If $M$ is of type $\I$ then the $\JW^*$-algebra $Q(s(h)Ms(h))$ is also of type $\I$.
%\item[iii.] If $M$ is semifinite then the $\JW^*$-algebra $Q(s(h)Ms(h))$ is also semifinite.
%\item[iv.] If $M$ is finite then the $\JW^*$-algebra $Q(s(h)Ms(h))$ is modular.
%\item[v.] If $P$ is in addition 2-copositive, then the $\JW^*$-algebra $Q(s(h)Ms(h))$ is reversible.
%\end{enumerate}	
\end{thm}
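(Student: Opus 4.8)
The overall strategy is to treat the two implications separately: the ``if'' part is a transfer argument through the change-of-density calculus of Haagerup's $\L^p$-spaces, while for the ``only if'' part one manufactures the data $(\varphi,h,Q)$ by locating a maximal positive fixed vector of $P$ and then reconstructing $Q$ by a Radon--Nikodym-type comparison in $\L^p(M)$; the Jordan structure is obtained at the end from the Effros--St\o rmer theorem, sharpened by an $\L^p$-rigidity argument. For the ``if'' direction, given $(\varphi,h,Q)$ I would first establish the density lemma that $h^{\frac12}\,s(h)Ms(h)\,h^{\frac12}$ is norm-dense in $s(h)\L^p(M)s(h)$; since $s(h)$ lies in the centralizer $M_\varphi$, this follows from the explicit description of Haagerup's $\L^p$-spaces. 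Condition~(3) says precisely that $Q$ is a trace-preserving positive unital idempotent on the von Neumann algebra $s(h)Ms(h)$ endowed with the faithful normal state $x\mapsto\tr_\varphi(h^px)/\tr_\varphi(h^p)$, hence extends to a contraction on the corresponding $\L^p$-space; conjugating by the isometric change-of-density embedding $x\mapsto h^{\frac12}xh^{\frac12}$ transports this to a contraction on $s(h)\L^p(M)s(h)$ agreeing with \eqref{relevement-E_h-intro} on the dense subspace, and we extend by~(1) to all of $\L^p(M)$. Idempotency is then immediate from $Q^2=Q$ and~(1), and positivity follows by approximating a positive element of $s(h)\L^p(M)s(h)$ by elements $h^{\frac12}xh^{\frac12}$ with $x\geq 0$.

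For the ``only if'' direction, let $P$ be a positive contractive projection. Splitting self-adjoint elements into positive and negative parts shows at once that $\Ran P\subseteq e\L^p(M)e$ for a fixed projection $e$, and a lattice/maximality argument using $\sigma$-finiteness (in the spirit of the commutative theory, where this is exactly where the ``support'' of $P$ is pinned down) produces a positive $h\in\Ran P$ with $s(h)=e$ maximal and $P(y)=P(eye)$ for all $y$, which is clause~(1). Since $M$ is $\sigma$-finite one then chooses a faithful normal state $\varphi$ with $e\in M_\varphi$, obtained by amalgamating faithful normal states on $eMe$ and on $(1-e)M(1-e)$, which gives clause~(2). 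To build $Q$, note that for $0\leq x\leq\norm{x}e$ one has $0\leq P(h^{\frac12}xh^{\frac12})\leq\norm{x}P(h)=\norm{x}\,h=\norm{x}\,h^{\frac12}eh^{\frac12}$, so by the Radon--Nikodym comparison in $\L^p(M)$ there is a unique $Q(x)\in eMe$ with $0\leq Q(x)\leq\norm{x}e$ and $P(h^{\frac12}xh^{\frac12})=h^{\frac12}Q(x)h^{\frac12}$; polarizing yields a positive unital map $Q\co eMe\to eMe$ realizing~(4), with $Q^2=Q$ forced by $P^2=P$, $Q(e)=e$ from $P(h)=h$, and normality from a weak$^*$-continuity argument.

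It remains to get~(3) and the structural claims. Here $\L^p(M)$ is smooth, so the norming functional of the fixed vector $h$, which is a multiple of $h^{p-1}$, is fixed by the adjoint contraction $P^{*}$ on $\L^{p'}(M)$; hence $\tr_\varphi(h^pQ(x))=\tr_\varphi(h^{p-1}P(h^{\frac12}xh^{\frac12}))=\tr_\varphi(P^{*}(h^{p-1})h^{\frac12}xh^{\frac12})=\tr_\varphi(h^px)$, which is~(3), and faithfulness of $Q$ is then an immediate consequence (if $x\geq 0$ with $Q(x)=0$ then $\tr_\varphi(h^px)=0$, forcing $h^{p/2}xh^{p/2}=0$ and hence $x=0$ since $x\in eMe$). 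That $h\in\Ran P$ was built in. For the Jordan assertion, the Effros--St\o rmer theorem on positive unital normal projections shows that $Q(eMe)$, with the product $x\bullet y:=Q(x\circ y)$, is a $\JBW^*$-algebra; it remains to identify this product with the restriction of $\circ$, equivalently to see that $Q(eMe)$ is $\circ$-closed in $(eMe,\circ)$. For this I would extract, from the rigidity of the $\L^p$-norm along $\Ran P$ (equality cases in Clarkson/triangle inequalities, differentiating $t\mapsto\norm{P(h^{\frac12}(q+tx)h^{\frac12})}_p$ at $t=0$ for $q\in Q(eMe)$), the module identity $Q(x\circ q)=Q(x)\circ q$; this forces $Q(x)\bullet Q(y)=Q(Q(x)\circ Q(y))=Q(x)\circ Q(y)$, so $Q(eMe)$ is a $\JW^*$-subalgebra of $(eMe,\circ)$, completing the proof.

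The main obstacles are two. First, the simultaneous requirement that $\varphi$ be faithful and normal with $s(h)\in M_\varphi$ while keeping $h$ and the pairing $\tr_\varphi$ under control is exactly what the non-tracial Haagerup $\L^p$ formalism is designed to accommodate, and carrying it out cleanly --- together with the lattice argument behind clause~(1) --- is delicate. Second, and more serious, is the rigidity step yielding the module identity $Q(x\circ q)=Q(x)\circ q$: Effros--St\o rmer only produces an abstract $\JBW^*$-structure on $Q(eMe)$, not a $\JW^*$-\emph{sub}algebra, so one must genuinely exploit $\L^p$-isometric rigidity --- which is weakest precisely at $p=2$, where positivity has to compensate for the non-uniqueness in Clarkson-type inequalities --- to conclude that $Q(eMe)$ is closed under the ambient Jordan product.
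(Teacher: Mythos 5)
Your outline of the ``if'' direction, the maximality construction of $h$, the Radon--Nikodym construction of $Q$ (which is in substance the lifting theorem the paper quotes from Junge--Ruan--Xu), and the smoothness argument giving $P^*(h^{p-1})=h^{p-1}$ and hence condition~(3) all follow the same route as the paper. The genuine gap is in the final structural claim. You correctly observe that the Effros--St\o rmer construction only equips $Q(s(h)Ms(h))$ with the abstract product $x\bullet y=Q(x\circ y)$, but your proposed remedy --- extracting the module identity $Q(x\circ q)=Q(x)\circ q$ from ``equality cases in Clarkson/triangle inequalities'' and from differentiating $t\mapsto\bnorm{P\big(h^{\frac12}(q+tx)h^{\frac12}\big)}_p$ --- is not an argument: no identity is actually derived, and you yourself concede the mechanism degenerates at $p=2$. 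The missing input is purely operator-algebraic and requires no $\L^p$-rigidity at all: since $Q$ is a \emph{faithful} normal unital positive projection on the von Neumann algebra $s(h)Ms(h)$, St\o rmer's results (\cite[Proposition 2.2.8]{Sto5} applied with $e=1$, together with \cite[Theorem 2.2.2]{Sto5} and the Broise-type identity \cite[Proposition 2.1.7]{Sto5}) already yield that the range of $Q$ restricted to the selfadjoint part is a genuine $\JW$-subalgebra for the \emph{ambient} Jordan product and that the range lies in the definite set of $Q$, whence $Q(x\circ Q(y))=Q(x)\circ Q(y)$; this is exactly Proposition~\ref{Prop-fundamental} in the paper. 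Faithfulness --- which you do establish --- is precisely the hypothesis that upgrades the abstract $\JBW^*$-structure to a $\JW^*$-subalgebra; without this theorem your sketch leaves the ``Moreover'' clause unproved.

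A secondary but real problem is clause~(1). The assertion that ``splitting self-adjoint elements into positive and negative parts shows at once that $\Ran P\subseteq e\L^p(M)e$'' does not hold up: for a merely positive projection, the positive and negative parts of a selfadjoint element of $\Ran P$ need not belong to $\Ran P$, so the supports of general range elements are not a priori dominated by $s_P$, and your maximality argument only produces the element $h$ with $s(h)=s_P$, not the localization identity $P(y)=P(s_Pys_P)$. In the paper this identity is exactly where the Ando-type duality-mapping lemma enters: $J_{\L^p(M)}$ carries $\Ran P$ onto $\Ran P^*$ preserving positivity in both directions, so $s_{P^*}=s_P$ and $\norm{P(y)}_p^2=\tr\big(s_Pys_P\,J_{\L^p(M)}(P(y))\big)$, which kills $P$ on the corners. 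You already invoke this very tool for condition~(3), so the repair is to apply it also at this earlier step; apart from these two points, your proposal is essentially the paper's argument.
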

Here $(x,y) \mapsto x \circ y \ov{\mathrm{def}}{=} \frac{xy+yx}{2}$ is the Jordan product which is commutative but non-associative and $\tr_\varphi$ is the trace on the Haagerup noncommutative $\L^1$-space $\L^1(\cal{M},\varphi)$ defined in \eqref{Def-tr}. 

Recall that a $\JW^*$-algebra is a weak* closed Jordan-$*$-subalgebra of a von Neumann algebra. This notion was introduced by Edwards in the paper \cite{Edw80} (see also \cite{You78}). Note that a von Neumann algebra equipped with the Jordan product is an example of $\JW^*$-algebra. It is known that the selfadjoint parts of $\JW^*$-algebras are precisely the $\JW$-algebras. In the continuity of the classical work \cite{JNW34} of Jordan, von Neumann and Wigner on Jordan algebras, the theory of $\JW$-algebras was introduced by Topping in \cite{Top65}. By the previous remark, the theory of $\JW^*$-algebras is essentially equivalent to the theory of $\JW$-algebras.  We refer to the books \cite{AlS03}, \cite{ARU97}, \cite{CGRP14}, \cite{CGRP18}, \cite{HOS84} and references therein for more information on these algebras. These are related to the study of the state spaces of $\mathrm{C}^*$-algebras, see \cite{AHOS80}, \cite{AlS78} and the survey \cite{Alf79}. Furthermore, these algebras are connected to bounded symmetric domains, see \cite[pp.~92-93]{HOS84} for a brief overview of this topic. Finally, the category of $\sigma$-finite $\JBW$-algebras is equivalent to the category of facially homogeneous self-dual cone in real Hilbert spaces, see \cite{Ioc84}. This fact can be seen as a generalization of the well-known one-to-one correspondence between $\sigma$-finite von Neumann algebras and orientable, facially homogeneous self-dual cones in complex Hilbert spaces introduced by Connes in \cite{Con74}.

A difficulty for the identification of the range of the projection $P$ is that the Jordan conditional expectation $Q$ is not necessarily selfadjoint in the sense of \cite[p.~122]{JMX06} with respect to the restriction of the state $\psi \ov{\mathrm{def}}{=} \frac{1}{\norm{h}_p^p}\tr_{\varphi}(h^p\,\cdot)$ on the von Neumann algebra $s(h)\cal{M}s(h)$, \textit{contrary} to the case of classical conditional expectations on von Neumann algebras. So at the time of writing, we have difficulties to describe the range of the induced map $Q_1 \co \L^1(s(h)\cal{M}s(h),\psi) \to \L^1(s(h)\cal{M}s(h),\psi)$ and we does not see how use Theorem \ref{main-th-ds-intro-bis} and complex interpolation for identifying the range of $P$.
 
The \textit{main} part of this paper consists precisely to investigate the fine structure of $P$ in order to determine the range of $P$. We will prove that the range of $P$ is isometrically isomorphic to a complex interpolation space of the form $(\cal{N},\cal{N}_*)_{\frac{1}{p}}$ for a suitable compatibility, where $\cal{N}$ is a $\JW^*$-algebra and where $\cal{N}_*$ is its predual, in large cases. These spaces are nonassociative $\L^p$-spaces associated with $\JW^*$-algebra, notion which we will introduce in a companion paper \cite{Arh23}.

Finally, we give related complements in another publication \cite{Arh22}. Indeed, we study the contractively decomposable projections where contractively decomposable maps are defined in \cite{ArK23} and \cite{JuR04}. Finally, we refer to \cite{HRS03}, \cite{PiX03} and \cite{RaX03} for more information on the structure of noncommutative $\L^p$-spaces.

\paragraph{Structure of the paper}
The paper is organized as follows. Section \ref{Haagerup-noncommutative} gives a  presentation of Haagerup noncommutative $\L^p$-spaces. We equally state and prove some results on Banach space geometry which are necessary for this paper. In Section \ref{Sec-Jordan}, we recall some information on Jordan algebras and we examine some contractive projections which we call Jordan conditional expectations. In Section \ref{Sec-a-local}, we give a proof of Theorem \ref{main-th-ds-intro-bis}. We closely follow the approach of \cite{ArR19}. Lemma \ref{Lemma-smooth} allows us to clarify the proof. Section \ref{Sec-a-precise} contains an analysis of the structure of the projection $P$ of Theorem \ref{main-th-ds-intro-bis}. We prove that in large cases its range is isometrically isomorphic to a nonassociative $\L^p$-space. Finally, we raise several an open problem in Section \ref{sec-open-questions} related to the content of this paper.

%%%%%%%%%%%%%%%%%%%%%%%%%%%%%%%%%%%%%%%%%%%%%%%%%%%%%%%%%%%%%%%%%%%%%%%%%%%%%%%%%%%%%%%%%%%%%%%%%%%%%%%%%%%%%%%%%%%%%%%%%%%%%%%%%%%%%%%%%%%%%%%%%%%%%%%
%\section{Preliminaries}
%\label{sec-Preliminaries}
%%%%%%%%%%%%%%%%%%%%%%%%%%%%%%%%%%%%%%%%%%%%%%%%%%%%%%%%%%%%%%%%%%%%%%%%%%%%%%%%%%%%%%%%%%%%%%%%%%%%%%%%%%%%%%%%%%%%%%%%%%%%%%%%%%%%%%%%%%%%%%%%%%%%%%%
%

%%%%%%%%%%%%%%%%%%%%%%%%%%%%%%%%%%%%%%%%%%%%%%%%%%%%%%%%%%%%%%%%%%%%%%%%%%%%%%%%%%%%%
\section{Haagerup's noncommutative $\L^p$-spaces and Banach space geometry}
\label{Haagerup-noncommutative}

It is well-known that there are several equivalent constructions of noncommutative $\L^p$-spaces associated with a von Neumann algebra. In this paper, we will use Haagerup's noncommutative $\L^p$-spaces introduced in \cite{Haa79a} and presented in a more detailed way in \cite{Ter81}. We denote by $s(x)$ the support of a positive operator $x$. If $\cal{M}$ is a von Neumann algebra equipped with a normal semifinite faithful trace, then the topological $*$-algebra of all (unbounded) $\tau$-measurable operators $x$ affiliated with $\cal{M}$ is denoted by $\L^0(\cal{M},\tau)$. 

%$s_l(x)$ and $s_r(x)$ the left support and the right support of an operator $x$. If $x$ is a positive operator then $s_l(x)=s_r(x)$ is called the support of $x$ and denoted by $s(x)$.

%If $a,b \in \L^0(M,\tau)_+$, we have
%\begin{equation}
%\label{Ine-measurable}
%a \leq b
%\iff \dom b^{\frac{1}{2}} \subset \dom a^{\frac{1}{2}}
%\text{ and } \norm{a^{\frac{1}{2}}\xi}_H \leq \norm{b^{\frac{1}{2}}\xi}_H, \text{ for any } \xi \in \dom b^{\frac{1}{2}}.
%\end{equation}
%If $a,b \in \L^0(M,\tau)$, we have
%\begin{equation}
%\label{support-properties}
%ab=0 \Rightarrow s_r(a)b=0 \text{ and } as_l(b)=0.
%\end{equation}

In the sequel, we fix a normal semifinite faithful weight $\varphi$ on a von Neumann algebra $\cal{M}$ acting on a Hilbert space $H$. The one-parameter modular automorphisms group associated with $\varphi$ is denoted by $\sigma^\varphi=(\sigma_t^\varphi)_{t \in \R}$ \cite[p.~92]{Tak03}. We denote by $\mathfrak{m}_\varphi^+$ the set of all positive $x \in \cal{M}$ such that $\psi(x)<\infty$ and $\mathfrak{m}_\varphi$ its complex linear span.

%Let $h \in M_+$, and consider the map $\varphi_h \co M_+ \to [0,\infty]$ given by $\varphi_h(x) = \varphi(h^{\frac{1}{2}}xh^{\frac{1}{2}})$. This map is a normal weight. 
%%(Boey p49)
%Furthermore if $h$ is invertible, then $\varphi_h$ is faithful and semifinite. More generally, with an affiliated operator $h$ we can define a weight $\varphi_h$ \cite[page 103]{Tak03}. If $M$ is a von Neumann algebra equipped with a normal semifinite faithful trace $\tau$ then every normal semi-finite weight $\varphi$ can be written uniquely in the form $\varphi=\tau_h$ with a positive selfadjoint operator $h$ affiliated with $M$ \cite[page 122]{Tak03} (often denoted by $\tau(h\cdot)$).%and called Radon-Nikodym derivative

For $1 \leq p <\infty$, the spaces $\L^p(\cal{M})$ are constructed as spaces of measurable operators relative not to $\cal{M}$ but to some semifinite bigger von Neumann algebra, namely, the crossed product $\tilde{\cal{M}} \ov{\mathrm{def}}{=} \cal{M} \rtimes_{\sigma^\varphi} \R$ of $\cal{M}$ by one of its modular automorphisms groups, that is, the von Neumann subalgebra of $\B(\L^2(\R,H))$ generated by the operators $\pi(x)$ and $\lambda_s \ot \Id_H$, where $x \in \cal{M}$ and $s \in \R$, defined by
\begin{equation}
\label{Translations}
\big(\pi(x)\xi\big)(t) \ov{\mathrm{def}}{=} \sigma^{\varphi}_{-t}(x)(\xi(t))
\quad \text{and} \quad
(\lambda_s \ot \Id_H)(\xi(t)) \ov{\mathrm{def}}{=} \xi(t-s), \quad t \in \R, \ \xi \in \L^2(\R,H).
\end{equation} 
For any $s \in \R$, let $W(s)$ be the unitary operator on $\L^2(\R,H)$ defined by
\begin{equation}
\label{Def-W}
\big(W(s)\xi\big)(t)
\ov{\mathrm{def}}{=} \e^{-\i s t} \xi(t),\quad \xi \in \L^2(\R,H).
\end{equation}
The dual action $\widehat{\sigma} \co \R \to \B(\tilde{\mathcal{M}})$ on $\mathcal{M}$ \cite[p.~260]{Tak03} is given by
\begin{equation}
\label{Dual-action}
\widehat{\sigma}_s(x)
\ov{\mathrm{def}}{=} W(s)xW(s)^*,\quad x \in \tilde{\cal{M}},\ s \in \R.
\end{equation}
Then, by \cite[Lemma 3.6]{Haa78a} or \cite[p.~259]{Tak03}, $\pi(\cal{M})$ is the fixed subalgebra of $\tilde{\mathcal{M}}$ under the family of automorphisms $\widehat{\sigma}_s$:
\begin{equation}
\label{carac-Pi-de-M}
\pi(\cal{M})
=\big\{x \in \tilde{\cal{M}}\ : \ \widehat{\sigma}_s(x)=x \quad \text{for all }s \in \R\big\}.
\end{equation}
We identify $\cal{M}$ with the subalgebra $\pi(\cal{M})$ in $\tilde{\cal{M}}$. If $\psi$ is a normal semifinite weight on $\cal{M}$, we denote by $\widehat{\psi}$ its Takesaki's dual weight on the crossed product $\cal{M}$, see the introduction of \cite{Haa78b} for a simple definition using the theory of operator valued weights. %defined in \cite[page 300]{Str81}. 
This dual weight  satisfies the $\widehat{\sigma}$-invariance relation $\widehat{\psi} \circ \widehat{\sigma}=\widehat{\psi}$, see \cite[(10) p.~]{Ter81}. In fact, Haagerup introduces an operator valued weight $T \co \cal{M}^+ \to \bar{\cal{M}}^+$ with values in the extended positive part\footnote{\thefootnote. If $\cal{M}=\L^\infty(\Omega)$ then $\bar{\cal{M}}^+$ identifies to the set of equivalence classes of measurable functions $\Omega \to [0,\infty]$.} $\bar{\cal{M}}^+$ of $\cal{M}$ and formally defined by 
\begin{equation}
\label{Operator-valued}
T(x)
=\int_\R \widehat{\sigma}_s(x)\d s
\end{equation}
and shows that for a normal semifinite weight $\psi$ on $\cal{M}$, its dual weight is 
\begin{equation}
\label{Def-poids-dual}
\widehat{\psi} 
\ov{\mathrm{def}}{=} \bar \psi\circ T
\end{equation}
where $\bar\psi$ denotes the natural extension of the normal weight $\psi$ to the whole of $\bar{\cal{M}}^+$.

By \cite[p.~301]{Str81} \cite[Th.~3.7]{Haa78a} \cite[Chap.~II, Lemma 1]{Ter81}, the map $\psi \to \widehat{\psi}$ is a bijection from the set of normal semifinite weights on $\cal{M}$ onto the set of normal semifinite $\widehat{\sigma}$-invariant weights on $\tilde{\cal{M}}$.

Recall that by \cite[Lemma 5.2 and Remark p.~343]{Haa79b} and \cite[Th.~1.1 (c)]{Haa78b} the crossed product $\tilde{\mathcal{M}}$ is semifinite and there is a unique normal semifinite faithful trace $\tau=\tau_{\varphi}$ on $\tilde{\mathcal{M}}$ satisfying 
\begin{equation}
\label{trace-carac}
(\mathrm{D}\widehat{\varphi}:\mathrm{D} \tau)_t
=\lambda_t \ot \Id_H, \quad t \in \R
\end{equation}
where $(\mathrm{D}\widehat{\varphi}:\mathrm{D} \tau)_t$ denotes the Connes cocycle \cite[p.~48]{Str81} \cite[p.~111]{Tak03} of the dual weight $\widehat{\varphi}$ with respect to $\tau$. Moreover, $\tau$ satisfies the relative invariance $\tau \circ \widehat{\sigma}_s = \e^{-s}\tau$ for any $s \in \R$ by \cite[Lemma 5.2]{Haa79b}.

If $\psi$ is a normal semifinite weight on $\cal{M}$, we denote by $h_\psi$ the Pedersen-Takesaki derivative of the dual weight $\widehat{\psi}$ with respect to $\tau$ given by \cite[Theorem 4.10]{Str81}. By \cite[Corollary 4.8]{Str81}, note that the relation of $h_\psi$ with the Radon-Nikodym cocycle of $\widehat\psi$ is
\begin{equation}
\label{Radon-Nikodym-1}
(\D\widehat\psi:\mathrm{D}\tau)_t
= h_\psi^{\i t}, \quad t \in \R.
\end{equation}

By \cite[Chap.~II, Prop.~4]{Ter81}, the mapping $\psi \to h_\psi$ gives a bijective correspondence between the set of all normal semifinite weights on $\cal{M}$ and the set of positive selfadjoint operators $h$ affiliated with $\tilde{\cal{M}}$ satisfying
\begin{equation}
\label{eq:def-L1}
\widehat{\sigma}_s(h)
=\e^{-s}h, \quad s \in \R.
\end{equation}
Moreover, by \cite[Chap.~II, Cor.~6]{Ter81}, $\omega$ belongs to $\cal{M}_*^+$ if and only if $h_\omega$ belongs to $\L^0(\tilde{\cal{M}},\tau)_+$. One may extend by linearity the map $\omega \mapsto h_\omega$ to the whole of $\cal{M}_*$. The Haagerup space $\L^1(\cal{M},\varphi)$ is defined as the set $\{h_\omega : \omega \in \cal{M}_*\}$, i.e. the range of the previous map. This is a closed linear subspace of $\L^0(\tilde{\cal{M}},\tau)$, characterized by the conditions (\ref{eq:def-L1}).

By \cite[Chap.~II, Th.~7]{Ter81}, the mapping $\omega \mapsto h_\omega$, $\cal{M}_* \to \L^1(\cal{M},\varphi)$ is a linear order isomorphism which preserves the conjugation, the module, and the left and right actions of $\cal{M}$. Then $\L^1(\cal{M},\varphi)$ may be equipped with a continuous linear functional $\tr_\varphi \co \L^1(\cal{M}) \to \C$  defined by
\begin{equation}
\label{Def-tr}
\tr_\varphi(h_\omega)
\ov{\mathrm{def}}{=} \omega(1),\quad \omega \in \cal{M}_*
\end{equation}
\cite[Chap.~II, Def.~13]{Ter81}. We also use the notation $\tr$ instead of $\tr_\varphi$. A norm on $\L^1(\cal{M},\varphi)$ may be defined by $\norm{h}_1 \ov{\mathrm{def}}{=} \tr(|h|)$ for every $h \in \L^1(\cal{M},\varphi)$. By \cite[Chap.~II, Prop.~15]{Ter81}, the map $\cal{M}_* \to \L^1(\cal{M},\varphi)$, $\omega \mapsto h_\omega$ is a surjective isometry.

More generally for $1 \leq p \leq \infty$, the Haagerup $\L^p$-space $\L^p(\cal{M},\varphi)$ associated with the normal faithful semifinite weight $\varphi$ is defined \cite[Chap.~II, Def.~9]{Ter81} as the subset of the topological $*$-algebra $\L^0(\tilde{\cal{M}},\tau)$ of all (unbounded) $\tau$-measurable operators $x$ affiliated with $\cal{M}$ satisfying for any $s \in \R$ the condition
\begin{equation}
\label{Def-Haagerup}
\widehat{\sigma}_s(x)=\e^{-\frac{s}{p}}x \quad \text{if } p<\infty 
\quad \text{and} \quad \widehat{\sigma}_s(x)=x \quad \text{ if } p=\infty	
\end{equation}
where $\widehat{\sigma}_s \co \L^0(\cal{M},\tau) \to \L^0(\cal{M},\tau)$ is here the continuous $*$-automorphism obtained by a natural extension of the dual action \eqref{Dual-action} on $\cal{M}$. By \eqref{carac-Pi-de-M}, the space $\L^\infty(\cal{M},\varphi)$ coincides with $\pi(\cal{M})$ that we identify with $\cal{M}$. The spaces $\L^p(M,\varphi)$ are closed selfadjoint linear subspaces of $\L^0(\cal{M},\tau)$. They are closed under left and right multiplications by elements of $\cal{M}$. If $h=u|h|$ is the polar decomposition of $h \in \L^0(\tilde{\cal{M}},\tau)$ then by \cite[Chap.~II, Prop.~ 12]{Ter81} we have
$$
h \in \L^p(\cal{M},\varphi)
\iff 
u \in \cal{M} \text{ and } |h| \in \L^p(\cal{M},\varphi).
$$
%By \cite[Chap.~II, Prop.~10]{Ter81},

Suppose $1 \leq p<\infty$. By \cite[Chap.~II, Prop.~12]{Ter81} and its proof, for any $h \in \L^0(\tilde{\cal{M}},\tau)_+$, we have $h^p \in \L^0(\tilde{\cal{M}},\tau)_+$. Moreover, an element $h \in \L^0(\tilde{\cal{M}},\tau)$ belongs to $\L^p(\cal{M},\varphi)$ if and only if $|h|^p$ belongs to $\L^1(\cal{M},\varphi)$. A norm on $\L^p(\cal{M},\varphi)$ is then defined by the formula 
\begin{equation}
\label{Def-norm-Lp}
\norm{h}_p
\ov{\mathrm{def}}{=} (\tr |h|^p)^{\frac{1}{p}}
\end{equation} 
if $1 \leq p < \infty$ and by $\norm{h}_\infty \ov{\mathrm{def}}{=}\norm{h}_\cal{M}$, see \cite[Chap.~II, Def.~14]{Ter81}.

\paragraph{Case of a normal faithful linear form} If $\varphi$ is a normal faithful linear form on $\cal{M}$ then by \cite[(1.13)]{HJX10} $h_\varphi$ belongs to $\L^1(\cal{M},\varphi)$ and 
\begin{equation}
\label{HJX-1.13}
\varphi(x)
=\tr(h_\varphi x)=\tr(xh_\varphi ), \quad x \in \cal{M}.
\end{equation}
%\textbf{REDIGER}
%\begin{equation}
%\label{varphi-trace}
%\varphi(z)=\tr(zD_{\varphi})=\tr(D_{\varphi}z).
%\end{equation}

\paragraph{Duality} Let $p, p^* \in [1,\infty]$ with $\frac{1}{p}+\frac{1}{p^*}=1$. By \cite[Chap.~II, Prop.~21]{Ter81}, for any $h \in \L^p(\cal{M},\varphi)$ and any $k \in \L^{p^*}(\cal{M},\varphi)$ we have $hk,kh \in \L^1(\cal{M},\varphi)$ and the tracial property $\tr(hk)=\tr(kh)$.

If $1 \leq p < \infty$, by \cite[Ch.~II, Th.~32]{Ter81} the bilinear form $\L^p(\cal{M},\varphi) \times \L^{p^*}(\cal{M},\varphi)\to \C$, $(h,k) \mapsto \tr(h k)$ defines a duality bracket between $\L^p(\cal{M},\varphi)$ and $\L^{p^*}(\cal{M},\varphi)$, for which $\L^{p^*}(\cal{M},\varphi)$ is (isometrically) the dual of $\L^p(\cal{M},\varphi)$. 

%\paragraph{Link with Dixmier's noncommutative $\L^p$-spaces} On the other hand, if the weight $\varphi$ is tracial, i.e. $\varphi(x^*x)=\varphi(xx^*)$ for all $x \in M$, then the Haagerup space $\L^p(M,\varphi)$ isometrically coincides with Dixmier's classical tracial noncommutative $\L^p$-space, see \cite[page~62]{Ter81}.

\paragraph{Change of weight} It is essentially proved in \cite[p.~59]{Ter81} %(see also \cite[Theorem 5.1]{Ray2})  
that $\L^p(\cal{M},\varphi)$ is independent of $\varphi$ up to an isometric isomorphism preserving the order and modular structure of $\L^p(\cal{M},\varphi)$, as well as the external products and Mazur maps. In fact given two normal semifinite faithful weights $\varphi_1, \varphi_2$ on $\cal{M}$ there is a $*$-isomorphism $\kappa \co \cal{M}_1 \to \cal{M}_2$ between the crossed products $\tilde{\cal{M}}_i\ov{\mathrm{def}}{=} \cal{M} \rtimes_{\sigma^\varphi_i} \R$ preserving $\cal{M}$, as well as the dual actions and pushing the trace on $\cal{M}_1$ onto the trace on $\cal{M}_2$, that is 
\begin{align}
\label{kappa}
\pi_2 = \kappa\circ \pi_1,\quad \widehat\sigma_2\circ\kappa 
= \kappa \circ \widehat\sigma_1 
\quad \text{and} \quad
\tau_2=\tau_1 \circ \kappa^{-1}.
\end{align}
Furthermore, $\kappa$ extends naturally to a topological $*$-isomorphism $\kappa \co \L^0(\tilde{\cal{M}}_1,\tau_1) \to \L^0(\tilde{\cal{M}}_2,\tau_2)$ between the algebras of measurable operators, which restricts to isometric $*$-isomorphisms between the noncommutative $\L^p$-spaces $\L^p(\cal{M},\varphi_1)$ and $\L^p(\cal{M},\varphi_2)$, preserving the $\cal{M}$-bimodule structures.

Moreover it turns out also that for every normal semifinite faithful weight $\psi$ on $\cal{M}$, the dual weights $\hat\psi_i$ corresponds through $\kappa$, that is $\hat{\psi}_2 \circ \kappa=\hat\psi_1$. It follows that if $\omega \in \cal{M}_*$ the corresponding Pedersen-Takesaki derivatives must verify $h_{\omega,2}=\hat{\kappa}(h_{\omega,1})$. In particular if $\omega \in \cal{M}_*^+$, we have
\begin{equation}
\tr_{\varphi_1} h_{\omega,1}
\ov{\eqref{Def-tr}}{=} \omega(1) 
\ov{\eqref{Def-tr}}{=} \tr_{\varphi_2} h_{\omega,2}
=\tr_{\varphi_2} \kappa(h_{\omega,1}).
\end{equation}
Hence $\kappa \co \L^1(\cal{M},\varphi_1) \to \L^1(\cal{M},\varphi_2)$ preserves the traces:
\begin{equation}
\label{Trace-preserving}
\tr_{\varphi_1}
=\tr_{\varphi_2} \circ \kappa.
\end{equation}
Since $\kappa$ preserves the $p$-powers operations, i.e. $\kappa(h^p)=(\kappa(h))^p$ for any $h \in \L^0(\tilde{\cal{M}})$, it induces an isometry from $\L^p(\cal{M},\varphi_1)$ onto $\L^p(\cal{M},\varphi_2)$. It is not hard to see that this isometry is completely positive and completely isometric, a fact which is of first importance for our study.
 
This independence allows us to consider $\L^p(\cal{M},\varphi)$ as a particular realization of an abstract space $\L^p(\cal{M})$.

%The $M$-bimodule structure and the norm of $\L^p(M)$ are defined unambiguously by those of any of its particular realization, as well as the trace functional of $\L^1(M)$ and the bilinear products $\L^p(M) \times \L^q(M) \to \L^r(M)$, $\frac 1r=\frac 1p+\frac 1q$, and the Mazur maps $\L^p_+(M) \to \L^1_+(M)$, $h \mapsto h^p$ (and their inverses). An element $h \in \L^1(M)$ identifies with the linear form $\psi \in M_*$ defined by the condition $\psi(x)=\tr(xh)$ for any $x \in M$.
%, and the positive part $\L^p_+(M)$ may be seen as the cone of $p$-roots $\psi^{\frac{1}{p}}$ of positive elements of $M_*$.

\paragraph{Centralizer of a weight} Recall that the centralizer \cite[p.~38]{Str81} of a normal semifinite faithful weight is the sub-von Neumann algebra $\cal{M}^\varphi=\{x \in \cal{M}: \sigma_t^\varphi(x)=x\text{ for all } t \in \R\}$. If $x \in \cal{M}$, we have by \cite[(2) p.~39]{Str81}
\begin{equation}
\label{Charac-centralizer}
x \in \cal{M}^\varphi
\iff x\mathfrak{m}_{\varphi} \subset \mathfrak{m}_{\varphi},\, \mathfrak{m}_{\varphi}x \subset \mathfrak{m}_{\varphi} \text{ and } \varphi(xy)=\varphi(yx) \text{ for any } y \in \mathfrak{m}_{\varphi}. 
\end{equation}

\paragraph{Reduced noncommutative $\L^p$-spaces} 

%A key fact for our analysis of positive contractions on noncommutative $\L^p$-spaces is  that for every projection $e$ in $M$ the linear subspace $e\L^p(M)e \ov{\mathrm{def}}{=}  \{ehe : h \in \L^p(M)\}$ is completely positively isometric to  $\L^p(eMe)$, the $\L^p$-space of the reduced von Neumann algebra $eMe$. This fact is not evident from a given realization of $\L^p(M,\varphi)$, since the restriction  $\varphi_e$ of $\varphi$ to $eMe$ may not be semifinite, or the crossed product $\R \rtimes_{\sigma^{\varphi_e}} eMe$ not be a reduct of  $\R \rtimes_{\sigma^{\varphi}} M$.

If the projection $e$ \textit{belongs} to the centralizer of $\varphi$, the restriction $\varphi_e$ of $\varphi$ on $e\cal{M}e$. It results from (\ref{Charac-centralizer}) that the weight $\varphi_e$ is still semifinite and is well-known that we can identify $\L^p(e\cal{M}e,\varphi_e)$ with the subspace $e\L^p(\cal{M},\varphi)e$ of $\L^p(\cal{M},\varphi)$ (see \cite[p.~508]{Wat98}). %\cite[Lemma 4.3]{GoLa}
Moreover, we have the following result. 

%which implies that the inclusions $\L^p(eMe) \subset \L^p(M)$ are isometric and preserve duality.

\begin{lemma}
\label{Lemme-trace coincide}
The Haagerup trace $\tr_\varphi$ restricts to $\tr_{\varphi_e}$  on $\L^1(e\cal{M}e)$. 
\end{lemma}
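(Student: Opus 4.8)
The plan is to lift everything to the crossed product and to keep track of how the canonical objects attached to $\varphi$ behave under reduction by the projection $e$ (which belongs to $M^\varphi$ by hypothesis).

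First, since $\sigma_t^\varphi(e)=e$, the weight $\varphi_e$ is a normal semifinite faithful weight on $eMe$ with $\sigma_t^{\varphi_e}=\sigma_t^\varphi|_{eMe}$. A direct computation with \eqref{Translations} and \eqref{Def-W} shows that $\pi(e)$ commutes with every $\lambda_s\ot\Id_H$ and with every $W(s)$, and that $\pi(e)\pi(x)\pi(e)=\pi(exe)$ for $x\in M$. Hence the crossed product $\mathcal M_e\ov{\mathrm{def}}{=}eMe\rtimes_{\sigma^{\varphi_e}}\R$ identifies with the reduced von Neumann algebra $\pi(e)\mathcal M\pi(e)$; since $\widehat\sigma_s(\pi(e))=\pi(e)$, the dual action \eqref{Dual-action} restricts to the dual action of $eMe$, and the operator valued weight $T$ of \eqref{Operator-valued} restricts to the operator valued weight $T_e\co\mathcal M_e^+\to\overline{eMe}^{+}$ of $eMe$, so that $\widehat{\varphi_e}=\widehat\varphi|_{\mathcal M_e}$ by \eqref{Def-poids-dual}. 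Moreover $\pi(e)$ is fixed by $\sigma^{\widehat\varphi}$ (which restricts to $\sigma^\varphi$ on $\pi(M)$), hence lies in the centralizers of both $\widehat\varphi$ and $\tau_\varphi$; therefore the Connes cocycle \eqref{trace-carac} restricts and $\bigl(\D\widehat{\varphi_e}:\D(\tau_\varphi|_{\mathcal M_e})\bigr)_t$ is the translation unitary of the crossed product $\mathcal M_e$. By the uniqueness of the trace satisfying \eqref{trace-carac} this forces $\tau_{\varphi_e}=\tau_\varphi|_{\mathcal M_e}$. Consequently, compression by $\pi(e)$ identifies $\L^0(\mathcal M_e,\tau_{\varphi_e})$ with $\pi(e)\L^0(\mathcal M,\tau_\varphi)\pi(e)$ and, via \eqref{Def-Haagerup}, realizes the identification $\L^p(eMe,\varphi_e)=e\L^p(M,\varphi)e$ recalled before the lemma.

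Now, by linearity of $\tr_\varphi$ and $\tr_{\varphi_e}$, it suffices to treat a positive $h\in e\L^1(M,\varphi)e$, say $h=h_\omega$ with $\omega\in M_*^+$. Since $\omega\mapsto h_\omega$ preserves the left and right actions of $M$ (\cite[Chap. II, Th. 7]{Terp}) and $h=\pi(e)h\pi(e)$, we get $h_\omega=h_{\omega_e}$ where $\omega_e(x)\ov{\mathrm{def}}{=}\omega(exe)$; by injectivity $\omega=\omega_e$, so that $\tr_\varphi(h)=\omega(1)=\omega(e)$. The relation $\omega=\omega_e$ also gives $\widehat\omega(y)=\widehat\omega(\pi(e)y\pi(e))$ for $y\in\mathcal M^+$, hence $\widehat\omega$ has support dominated by $\pi(e)$ and $\widehat\omega|_{\mathcal M_e}=\widehat{\omega'}$ with $\omega'\ov{\mathrm{def}}{=}\omega|_{eMe}\in(eMe)_*^+$. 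Since the Pedersen-Takesaki derivative is local with respect to a supporting corner, the density of $\widehat\omega$ with respect to $\tau_\varphi$, viewed inside $\L^0(\mathcal M_e,\tau_\varphi|_{\mathcal M_e})=\L^0(\mathcal M_e,\tau_{\varphi_e})$, equals the density of $\widehat{\omega'}$ with respect to $\tau_{\varphi_e}$; that is, as an element of $\L^1(eMe,\varphi_e)$ the operator $h$ equals $h_{\omega'}$. Therefore $\tr_{\varphi_e}(h)=\omega'(1_{eMe})=\omega'(e)=\omega(e)$, and comparing the two computations yields $\tr_{\varphi_e}(h)=\tr_\varphi(h)$.

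The main obstacle is the second paragraph above: one has to verify that the (otherwise soft) identification $\L^p(eMe,\varphi_e)=e\L^p(M,\varphi)e$ is compatible with \emph{all} the abstract Haagerup data — the dual action, the operator valued weight $T$, the canonical trace $\tau$ and the bijection $\omega\leftrightarrow h_\omega$ — under reduction by $\pi(e)$. This is precisely where the uniqueness of the canonical trace and the functorial behaviour of operator valued weights and of the Pedersen-Takesaki derivative under reduction are used; once these compatibilities are in hand, the identity $\tr_{\varphi_e}=\tr_\varphi|_{\L^1(eMe)}$ reduces to the short computation $\omega(1)=\omega(e)=\omega'(e)$ of the third paragraph.
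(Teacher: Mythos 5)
Your proof is correct, and in fact the paper offers no argument of its own for this lemma: it is stated as part of the folklore surrounding the identification $\L^p(eMe,\varphi_e)=e\L^p(M,\varphi)e$ quoted from \cite{Wat}, so what you have written is precisely the standard verification that this identification is compatible with the whole Haagerup apparatus (crossed product, dual action, operator valued weight $T$, canonical trace, Pedersen--Takesaki densities), after which the lemma collapses to the computation $\tr_{\varphi_e}(h)=\omega'(e)=\omega(e)=\omega(1)=\tr_\varphi(h)$.

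The one step you assert faster than you justify is the identification $eMe\rtimes_{\sigma^{\varphi_e}}\R=\pi(e)\mathcal{M}\pi(e)$. The facts you list (that $\pi(e)$ commutes with the $\lambda_s\ot\Id_H$ and the $W(s)$, and that $\pi(e)\pi(x)\pi(e)=\pi(exe)$) give immediately only the inclusion of the reduced crossed product into $\pi(e)\mathcal{M}\pi(e)$, while your later steps (the identification of $\L^0(\mathcal{M}_e,\tau_{\varphi_e})$ with the compressed algebra of measurable operators, and the comparison of the two Pedersen--Takesaki derivatives, for which $h_\omega$ must be affiliated with $\mathcal{M}_e$) do use the reverse inclusion. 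It is, however, routine to supply: since $(\lambda_s\ot\Id_H)\pi(x)(\lambda_s\ot\Id_H)^*=\pi(\sigma_s^{\varphi}(x))$ by \eqref{Translations}, the linear span of the operators $\pi(x)(\lambda_s\ot\Id_H)$, $x\in M$, $s\in\R$, is a weak* dense $*$-subalgebra of $\mathcal{M}$, and compressing a generic element gives $\pi(e)\pi(x)(\lambda_s\ot\Id_H)\pi(e)=\pi(exe)(\lambda_s\ot\Id_H)\pi(e)$, which lies in $eMe\rtimes_{\sigma^{\varphi_e}}\R$; passing to weak* limits yields $\pi(e)\mathcal{M}\pi(e)\subseteq eMe\rtimes_{\sigma^{\varphi_e}}\R$ (this compression identity is in effect the content of the identification the paper quotes from \cite{Wat}, and appears also in \cite{GoLa} and \cite{HJX}). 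With that point supplied, your remaining reductions --- restriction of the dual weights through the bimodule property of $T$ in \eqref{Operator-valued}, identification of the canonical traces via the reduction of Connes cocycles by a projection lying in the centralizers together with the uniqueness in \eqref{trace-carac}, locality of the Pedersen--Takesaki derivative, and the final evaluation of the two trace functionals on $h_\omega=h_{\omega'}$ --- are all sound.
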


%\begin{proof}
%Indeed, we have $\hat\psi(\bar e x\bar e) = \tau_{\bar e}(h_\psi^{\frac{1}{2}} \bar e x\bar e h_\psi^{\frac{1}{2}}) = \tau(\bar e (h_\psi^{\frac{1}{2}} \bar e x\bar e h_\psi^{\frac{1}{2}} \bar e))= \tau(h_\psi^\frac{1}{2}xh_\psi^\frac{1}{2})$ (since $h_\psi \in\bar e \L^0(\M,\tau)\bar e$). It is then clear by unicity of the Radon-Nikodym derivative that 
%$$
%h_{i_e\psi}
%=h_\psi.
%$$
%We deduce that
%$$
%\tr_\varphi h_{i_e \psi}
%= i_e\psi(1)
%=\psi(e)
%=\tr_{\varphi_e}h_\psi.
%$$
%\end{proof}
%It follows at once .

%\vspace{0.2cm}

Let $e \in \cal{M}$ be a projection. Let us construct a normal semifinite weight on $\cal{M}$ with centralizer containing $e$. 
%\cite[10.10]{Str81} \cite[Exercise 7.6.46]{KaR97b} 
Consider two normal semifinite faithful weights $\varphi_1$ and $\varphi_2$ on $e\cal{M}e$ and $e^\perp \cal{M} e^\perp$. By \cite[p.~155]{RaX03}, we can define a normal semifinite faithful weight $\varphi$ on $\cal{M}$ by 
\begin{equation}
\label{Extension-weight1}
\varphi(x)
\ov{\mathrm{def}}{=} \varphi_1(exe)+\varphi_2(e^\perp x e^\perp), \quad x \in \cal{M}_+.
\end{equation}
Moreover, $e$ belongs to the centralizer of $\varphi$ by \eqref{Charac-centralizer} and we have $\varphi_e=\varphi_1$.
%, it is easy to check that $e$ belongs to the centralizer of $\varphi$
%\end{proof}

%In fact it is not hard to see that this example is generic.

%We will use the following classical lemma. We add the uniqueness part in part 1. 
%\begin{lemma}
%\label{Lemma-DP}
%Let $M$ be a (semifinite) von Neumann algebra equipped with a faithful normal semifinite trace $\tau$. 
%\begin{enumerate}
	%\item Let $a,b \in \L^0(\mathcal{M},\tau)^+$ satisfy $a \leq b$. Then there exists a unique $x \in \mathcal{M}$ such that $a^{\frac 12} = x b^{\frac 12}$ and $xs(b)=x$. Moreover, we have necessarily $\norm{x}_\infty \leq 1$.
%
%\item If $a \leq b$ in $\L^0(\mathcal{M},\tau)_\sa$ and if $x \in \L^0(\mathcal{M},\tau)$ then $x^*ax \leq x^*bx$.
%\end{enumerate}
%\end{lemma}
%
%\begin{proof}
%1. Suppose that $\mathcal{M}$ acts on a Hilbert $H$. If $x$ satisfies the properties then $x$ is obviously null on the subspace $(1-s(b))H$. Moreover, the restriction of $x$ on the subspace $s(b)H$ is given by $x|s(b)H=a^{\frac 12}b^{-\frac 12}$ where $b^{-\frac 12} \co s(b)H \to s(b)H$. We conclude that $x$ is uniquely determined. The proof of the existence is given in \cite[Remark 2.3]{DeJ1} and shows the relation $xs(b)=x$.
%\end{proof}

The following is an easy folklore observation.

\begin{lemma}
\label{lemma2-GL}
Let $\cal{M}$ be a von Neumann algebra and  $1 \leq p <\infty$. Let $h$ be a positive element of $\L^p(\cal{M})$.
\begin{enumerate}
	\item The map $s(h)\cal{M}s(h) \to \L^p(\cal{M})$, $x \mapsto h^{\frac{1}{2}} x h^{\frac{1}{2}}$ is injective.
	\item Suppose $1 \leq p <\infty$. The subspace $h^{\frac{1}{2}} \cal{M} h^{\frac{1}{2}}$ is dense in $s(h)\L^p(\cal{M})s(h)$ for the topology of $\L^p(\cal{M})$.
\end{enumerate}
\end{lemma}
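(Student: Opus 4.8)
For part (1), the plan is to exploit the support projection. Write $s = s(h)$, so $h = sh = hs$ and $h^{1/2} = s h^{1/2} = h^{1/2} s$. First I would reduce to showing injectivity of the map $x \mapsto h^{1/2} x h^{1/2}$ on $sMs$. Suppose $x \in sMs$ with $h^{1/2} x h^{1/2} = 0$. The key point is that $h^{1/2}$, viewed as a positive self-adjoint operator affiliated with $\mathcal{M}$, has range projection exactly $s$ (this is the defining property of the support $s(h) = s(h^{1/2})$). Hence the map $y \mapsto h^{1/2} y$ is injective on the subspace $s \L^0(\mathcal{M},\tau)$ — more precisely, if $h^{1/2} z = 0$ for $z$ affiliated with $\mathcal{M}$ then $sz = 0$, since $\ker h^{1/2} = \ker s = (1-s)H$ at the level of measurable operators (one can phrase this via the polar decomposition and the fact that $|h^{1/2}|$ has trivial kernel on $sH$). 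Applying this twice: from $h^{1/2}(x h^{1/2}) = 0$ we get $s(x h^{1/2}) = x h^{1/2} = 0$ (using $x = sx$), and then taking adjoints $h^{1/2} x^* = 0$ gives $s x^* = x^* = 0$, hence $x = 0$. I should be careful to justify the cancellation law for unbounded measurable operators; the clean way is to use that $\L^0(\mathcal{M},\tau)$ is a $*$-algebra in which $|a|$ and $a$ have the same kernel (as projections in $\mathcal{M}$), together with $s(h^{1/2}) = s$.

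For part (2), the goal is density of $h^{1/2} M h^{1/2}$ in $s\L^p(M)s$. Here I would argue in two stages. First, observe $h^{1/2} M h^{1/2} = h^{1/2}(sMs)h^{1/2}$ since $h^{1/2} = h^{1/2}s$, so it suffices to approximate elements of $s\L^p(M)s$. Second, I would pass through an intermediate dense class: one-sided approximation. By a standard fact about Haagerup $\L^p$-spaces, for $1 \leq p < \infty$ the set $h^{1/2} k$ with $k \in \L^{2p}(M,\varphi)$, or equivalently products $ab$ with $a \in \L^{2p}$, $b \in \L^{2p}$, is dense in $s\L^p(M)s$ when one restricts appropriately; but more directly, I would use that since $h$ is positive with support $s$, the operators $h^{1/2} f_n(h) = f_n(h) h^{1/2}$ — where $f_n$ is a bounded Borel function with $f_n(t) \to t^{-1/2}$ suitably cut off near $0$, e.g. $f_n(t) = t^{-1/2}\mathbf{1}_{[1/n,n]}(t)$ — converge strongly to $s$ in the sense that $h^{1/2} f_n(h) \to s$ in the strong resolvent / measure topology and the spectral projections $e_n = \mathbf{1}_{[1/n,n]}(h)$ increase to $s$. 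Then for $y \in s\L^p(M)s$ we have $e_n y e_n \to y$ in $\L^p$ (left and right multiplication by the increasing projections $e_n$ converges to multiplication by $s$ in $\L^p$-norm for $p < \infty$, by dominated convergence for the spectral measure together with $e_n y e_n \to sys = y$), and $e_n y e_n = h^{1/2}\big(f_n(h) y f_n(h)\big) h^{1/2}$ where $f_n(h) y f_n(h) \in s\L^p(M)s$; a further cutoff of $y$ itself (replacing $y$ by $e_m y e_m \in M$-bounded... ) lands us inside $h^{1/2} M h^{1/2}$. So the scheme is: approximate $y$ by $e_n y e_n$, then write each $e_n y e_n$ as $h^{1/2} x_n h^{1/2}$ with $x_n = f_n(h) y f_n(h)$, and finally approximate $x_n \in s\L^p(M)s$ in a way compatible with the $\L^p$-norm of $h^{1/2} x_n h^{1/2}$ by an element of $M$ — e.g. using density of $M$ in $\L^p$ and continuity of the (bounded, by part of the construction) map $x \mapsto h^{1/2} x h^{1/2}$ on the relevant truncated pieces.

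The main obstacle I anticipate is the bookkeeping in part (2): controlling the approximation in $\L^p$-norm rather than merely in measure, and ensuring the double truncation (by spectral projections of $h$ on one side, and approximating the middle factor by a bounded operator on the other) really produces elements of the algebraic set $h^{1/2} M h^{1/2}$ and not just of its closure in some weaker topology. The cleanest route is probably to first establish the ``bounded middle'' density — that $\{h^{1/2} x h^{1/2} : x \in M\}$ is dense in $\{h^{1/2} z h^{1/2} : z \in s\L^p(M)s\}$ — which reduces to continuity of $x \mapsto h^{1/2} x h^{1/2}$ from $(M, \|\cdot\|_p)$ into $\L^p$ (Hölder: $\|h^{1/2} x h^{1/2}\|_p \le \|h^{1/2}\|_{2p}^2 \|x\|_\infty$ is the wrong bound; instead use $\|h^{1/2} x h^{1/2}\|_p \le \|x\|_p^{?}$ — in fact the right statement is that $h^{1/2}(\cdot)h^{1/2}$ need not be bounded $\L^p \to \L^p$, so one argues within the range of a fixed spectral projection $e_n$ where $h^{1/2} e_n$ is bounded). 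Then, separately, show $\{h^{1/2} z h^{1/2} : z \in s\L^p(M)s\}$ is dense in $s\L^p(M)s$ via the spectral cutoff $e_n \nearrow s$. Composing the two density statements gives the claim. I expect roughly a page of careful but routine estimates once this two-step architecture is fixed.
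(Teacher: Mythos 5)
Part (1) of your plan is correct, and the kernel/cancellation argument is the right one: from $h^{\frac{1}{2}}z=0$ you can multiply on the left by the bounded operators $h^{-\frac{1}{2}}1_{[1/n,\infty)}(h)\in\mathcal{M}$ and let $n\to\infty$ in the measure topology to get $s(h)z=0$; the auxiliary operators live only in the crossed product $\mathcal{M}$, which is harmless since the conclusion involves only $s(h)\in M$. (The paper itself records the lemma as folklore and prints no proof, so there is nothing to compare against; your part (1) would be an acceptable write-up.)

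Part (2), however, has a genuine gap in the setting the paper actually works in, namely Haagerup's $\L^p(M)$. There $h$ is a $\tau$-measurable operator affiliated with $\mathcal{M}=M\rtimes_{\sigma^\varphi}\R$, not with $M$, and it satisfies $\widehat{\sigma}_s(h)=\e^{-s/p}h$ by \eqref{Def-Haagerup}. Hence its spectral projections $e_n=1_{[1/n,n]}(h)$ transform as $\widehat{\sigma}_s(e_n)=1_{[1/n,n]}(\e^{-s/p}h)$: they are not fixed by the dual action, so by \eqref{carac-Pi-de-M} they do not belong to $M$ (only the support $s(h)=1_{(0,\infty)}(h)$ is $\widehat{\sigma}$-invariant). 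Consequently $e_nye_n$ and $f_n(h)yf_n(h)$ are not homogeneous of degree $-\frac{1}{p}$ under $\widehat{\sigma}$ and therefore leave $\L^p(M)$ altogether; already for $M=\C$ one has $\L^p(\C)=\C\,\exp^{1/p}$ inside $\L^0(\L^\infty(\R))$, and truncating $h$ by a spectral projection exits this one-dimensional space. So the statements ``$e_nye_n\to y$ in $\L^p$'' and ``$e_nye_n=h^{\frac{1}{2}}\bigl(f_n(h)yf_n(h)\bigr)h^{\frac{1}{2}}$ with middle factor to be pushed into $M$'' are not available: your architecture is the standard tracial (Dixmier) argument and does not transfer to the Haagerup picture. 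A fix that stays elementary is duality, and it reuses your part (1): for $1\le p<\infty$ the dual of $\L^p(M)$ is $\L^{p^*}(M)$ (with $\L^\infty=M$ when $p=1$) under $(y,k)\mapsto\tr(yk)$. If $k$ annihilates $h^{\frac{1}{2}}Mh^{\frac{1}{2}}$, then $\tr\bigl(x\,h^{\frac{1}{2}}kh^{\frac{1}{2}}\bigr)=0$ for all $x\in M$ by the tracial property, so $h^{\frac{1}{2}}kh^{\frac{1}{2}}=0$ in $\L^1(M)\cong M_*$; the cancellation of part (1), which works verbatim for a $\tau$-measurable middle factor, gives $s(h)ks(h)=0$, and then $\tr(yk)=\tr\bigl(y\,s(h)ks(h)\bigr)=0$ for every $y\in s(h)\L^p(M)s(h)$. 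Hahn--Banach yields the density. Alternatively, reduce to $N=s(h)Ms(h)$, note that $h^p$ is the Haagerup density of a faithful normal positive functional $\psi$ on $N$, and invoke the known density of $h_\psi^{\frac{1}{2p}}N h_\psi^{\frac{1}{2p}}$ in $\L^p(N)$ together with the identification $\L^p(N)\cong s(h)\L^p(M)s(h)$.
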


\paragraph{Lifting result}

Our main tool will be the following result of \cite{ArR19} which can be proved with the same ideas.% (we refer to \cite{ArR} for complete details).

\begin{thm}
\label{Th-relevement-cp} 
Let $\cal{M}$ and $\cal{N}$ be von Neuman algebras. Suppose $1 \leq p < \infty$. Let $T \co \L^p(\cal{M}) \to \L^p(\cal{N})$ be a positive linear map. Let $h$ be a positive element of $\L^p(\cal{M})$. %with $s(h)=1_M$. 
Then there exists a unique linear map $v \co \cal{M} \to s(T(h))\cal{N}s(T(h))$ such that 
\begin{equation}
\label{equa-relevement}
T\big(h^{\frac{1}{2}}xh^{\frac{1}{2}}\big)
=T(h)^{\frac{1}{2}}v(x)T(h)^{\frac{1}{2}},\qquad x \in \cal{M}.
\end{equation}
Moreover, this map $v$ is unital, contractive, positive and normal. Furthermore, if $T$ is $n$-copositive then $v$ is $n$-copositive.  
\end{thm}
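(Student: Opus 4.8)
The plan is to read the identity \eqref{equa-relevement} as the \emph{definition} of $v$. Write $k \ov{\mathrm{def}}{=} T(h) \in \L^p(N)_+$ and $e \ov{\mathrm{def}}{=} s(T(h)) = s(k)$. Since $b \mapsto k^{\frac12}bk^{\frac12}$ is injective on $eNe = s(k)Ns(k)$ by Lemma~\ref{lemma2-GL}, any map $v$ satisfying \eqref{equa-relevement} is automatically unique, and — once it is defined set-theoretically — its linearity follows from that of $T$ combined with this injectivity (argue first on self-adjoint elements, writing them as differences of positive ones, then complexify). So the whole problem reduces to: (a) producing, for each $x$ with $0 \le x \le 1_M$, an element $v(x) \in eNe$ with $0 \le v(x) \le e$ and $T(h^{\frac12}xh^{\frac12}) = k^{\frac12}v(x)k^{\frac12}$; and (b) checking the four asserted properties.

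The heart of (a) is a noncommutative Radon--Nikodym statement. A positive linear functional on a Haagerup $\L^p$-space is automatically bounded, and hence so is $T$ (uniform boundedness, after pairing $T$ with positive elements of $\L^{p^*}(N)$); in particular $T$ is $\sigma(\L^p,\L^{p^*})$-continuous. If $0 \le x \le 1_M$ then $0 \le h^{\frac12}xh^{\frac12} \le h$ in $\L^p(M)_+$, so $0 \le T(h^{\frac12}xh^{\frac12}) \le k$, and it is enough to prove: every $y$ with $0 \le y \le k$ in $\L^p(N)_+$ can be written $y = k^{\frac12}bk^{\frac12}$ with $b \in eNe$ and $0 \le b \le e$. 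From $0 \le y \le k$ one first gets $s(y) \le e$. Let $q_n \ov{\mathrm{def}}{=} \chi_{[\frac1n,\infty)}(k) \uparrow e$ and $b_n \ov{\mathrm{def}}{=} (k^{-\frac12}q_n)\,y\,(q_nk^{-\frac12}) \in \mathcal N$ (the crossed product of $N$), so that $0 \le b_n \le q_n \le e$ and $b_n = q_nb_mq_n$ for $m \ge n$ (all operators in sight commute with $k$). A bounded-net argument built on the latter identity forces $b_n \to b$ $\sigma$-weakly for a single $b$ with $0 \le b \le e$. Since $k^{\frac12}\zeta k^{\frac12} \in \L^1(N)$ is a normal functional for every $\zeta \in \L^{p^*}(N)$, one has $k^{\frac12}b_nk^{\frac12} \to k^{\frac12}bk^{\frac12}$ weakly; on the other hand $k^{\frac12}b_nk^{\frac12} = q_nyq_n \to y$, whence $y = k^{\frac12}bk^{\frac12}$. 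Finally, $\widehat\sigma_s(y) = \e^{-s/p}y$, $\widehat\sigma_s(k) = \e^{-s/p}k$ and the normality of $\widehat\sigma_s$ give $\widehat\sigma_s(b) = b$ for all $s \in \R$, so $b \in \pi(N) \cap e\mathcal Ne = eNe$. Running the same step once more shows in addition that, for self-adjoint $b \in eNe$, $k^{\frac12}bk^{\frac12} \ge 0$ forces $b \ge 0$ (apply the statement to $0 \le k^{\frac12}bk^{\frac12} \le \|b\|k$ and invoke injectivity).

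Step (b) is then mostly bookkeeping. The construction gives a well-defined linear $v \co M \to eNe$ satisfying \eqref{equa-relevement}; it is positive by construction, and unital because $k^{\frac12}ek^{\frac12} = k = T(h^{\frac12}\cdot 1_M \cdot h^{\frac12})$, hence contractive by the Russo--Dye theorem (for self-adjoint $x$, contractivity is immediate from $-e \le v(x) \le e$ when $-1_M \le x \le 1_M$). For normality, let $x_i \uparrow x$ be an order-bounded increasing net in $M_+$: then $h^{\frac12}x_ih^{\frac12} \to h^{\frac12}xh^{\frac12}$ weakly in $\L^p(M)$ (pair against $\L^{p^*}(M)_+$), so $\sigma(\L^p,\L^{p^*})$-continuity of $T$ gives $k^{\frac12}v(x_i)k^{\frac12} \to k^{\frac12}v(x)k^{\frac12}$ weakly; but also $v(x_i) \uparrow y_0 := \sup_i v(x_i) \le v(x)$ $\sigma$-weakly in $eNe$, so $k^{\frac12}v(x_i)k^{\frac12} \to k^{\frac12}y_0k^{\frac12}$ weakly (again because $k^{\frac12}\zeta k^{\frac12} \in \L^1(N)$), and comparing weak limits and using injectivity yields $v(x) = y_0$; thus $v$ is normal. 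For $n$-copositivity, if $[a_{ij}] \in \M_n(M)_+$ then $[h^{\frac12}a_{ij}h^{\frac12}] \ge 0$, so $n$-copositivity of $T$ gives $(I_n \ot k^{\frac12})\,[v(a_{ji})]\,(I_n \ot k^{\frac12}) = [T(h^{\frac12}a_{ji}h^{\frac12})] \ge 0$; since $[v(a_{ji})]$ is self-adjoint in $\M_n(eNe)$ and $I_n \ot k^{\frac12}$ has support $I_n \ot e$, the positivity-reflection just established (applied in $\M_n(N)$) gives $[v(a_{ji})] \ge 0$.

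The main obstacle is the noncommutative Radon--Nikodym step above: the ``density'' $k^{-\frac12}$ is unbounded and only affiliated with $\mathcal N$, so one is forced through the spectral truncations $q_n$, must control all the limits $\sigma$-weakly rather than in norm, and — the delicate point — must check that the limit $b$ is invariant under the dual action, so that it lies in $N$ and not merely in $\mathcal N$. After that, uniqueness, linearity and all the remaining properties come from the injectivity of $b \mapsto k^{\frac12}bk^{\frac12}$ (Lemma~\ref{lemma2-GL}) together with the standard continuity of the module action on Haagerup $\L^p$-spaces.
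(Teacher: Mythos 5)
The paper itself gives no proof of Theorem \ref{Th-relevement-cp}: it only says the statement is a generalization of \cite[Theorem 3.1]{JRX} ``which can be proved with the same ideas''. Your architecture is precisely that standard route: reduce everything to the Radon--Nikodym-type factorization ``$0\le y\le k$ in $\L^p(N)$ implies $y=k^{\frac12}bk^{\frac12}$ with $b\in s(k)Ns(k)$, $0\le b\le s(k)$'', and then extract uniqueness, linearity, positivity, unitality, contractivity (Russo--Dye), normality and $n$-copositivity from the injectivity of $b\mapsto k^{\frac12}bk^{\frac12}$ (Lemma \ref{lemma2-GL}). Your step (b) bookkeeping, including the positivity-reflection used for copositivity applied in $\M_n(N)$ to $I_n\ot k$, is sound.

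The genuine gap is inside your proof of the factorization itself, at the identification of the limit. The spectral projections $q_n=\chi_{[\frac1n,\infty)}(k)$ belong to the crossed product $\mathcal N$ but \emph{not} to $N$: they are not invariant under the dual action, since $\widehat{\sigma}_s(q_n)=\chi_{[\e^{s/p}/n,\infty)}(k)$. Hence $b_n\in\mathcal N\setminus N$ and $k^{\frac12}b_nk^{\frac12}=q_nyq_n\notin\L^p(N)$. Consequently the justification you give does not parse: the element $k^{\frac12}\zeta k^{\frac12}\in\L^1(N)$ is a normal functional \emph{on $N$} and cannot be evaluated at $b_n$, and the Haagerup trace $\tr$ cannot be applied to $q_nyq_n\,\zeta$, so the two asserted convergences ``$k^{\frac12}b_nk^{\frac12}\to k^{\frac12}bk^{\frac12}$ weakly'' and ``$q_nyq_n\to y$'' take place in no common, well-defined topology, and the crucial identity $y=k^{\frac12}bk^{\frac12}$ is not established as written. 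For the same reason your one-line dual-action argument is too quick: $\widehat{\sigma}_s(b_n)=k^{-\frac12}\chi_{[\e^{s/p}/n,\infty)}(k)\,y\,\chi_{[\e^{s/p}/n,\infty)}(k)\,k^{-\frac12}\neq b_n$, so $\widehat{\sigma}_s(b)=b$ requires the extra observation that the $\sigma$-weak limit of the truncations is independent of the threshold scale (or a uniqueness argument for $b$ in the corner $s(k)\mathcal Ns(k)$). Both defects are repairable: identify the limit by quadratic forms on $\dom(k^{\frac12})$, using $\bnorm{y^{\frac12}(q_n-e)\xi}^2\le\bnorm{k^{\frac12}(q_n-e)\xi}^2\to 0$ where $e=s(k)$, or bypass the truncations entirely via the Douglas factorization $y^{\frac12}=c\,k^{\frac12}$ with a contraction $c\in\mathcal N$, setting $b=c^*c$ and deducing $\widehat{\sigma}$-invariance from uniqueness; but until one of these is carried out, the key Radon--Nikodym step, and with it the whole construction of $v$, is not proved.
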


\paragraph{Extension of maps on noncommutative $\L^p$-spaces}
Let $\cal{M}$ be a von Neumann algebra equipped with a normal semifinite faithful weight $\varphi$. Suppose that $\cal{N}$ is another von Neumann algebra equipped with a normal faithful weight $\psi$. Consider a unital positive map $T \co \cal{M} \to \cal{N}$ such that $\psi \circ T =\varphi$. Given $1 \leq p<\infty$, the map
\begin{equation}
\label{Map-extension-Lp}
\begin{array}{cccc}
T_p  \co &  h_\varphi^{\frac{1}{2p}} \mathfrak{m}_\varphi h_\varphi^{\frac{1}{2p}}  &  \longrightarrow   &  \L^p(\cal{N}) \\
           &   h_\varphi^{\frac{1}{2p}}x h_\varphi^{\frac{1}{2p}}  & \longmapsto &  h_\psi^{\frac{1}{2p}}T(x) h_\psi^{\frac{1}{2p}}  \\
\end{array}
\end{equation}
%By \cite[Theorem 5.1]{HJX10}, 
extends to a contractive map $T_p$ from $\L^p(\cal{M})$ into $\L^p(\cal{N})$. See \cite[Remark 5.6]{HJX10}.

\paragraph{Normalized duality mappings} Recall that a normed linear space $X$ is said to be strictly convex (or rotund) if for any $x,y \in X$ the equalities $\frac{\norm{x+y}_X}{2}=\norm{x}_X=\norm{y}_X$ imply $x=y$.%Strict Convexity and Complex Strict Convexity: Theory and Applications
%De Istratescu %Megginson p427

Let $X$ be a Banach space. For each $x \in X$, we can associate \cite[Definition 2.12]{Pat18} the subset
\begin{equation}
\label{Def-JX}
J_X(x) 
\ov{\mathrm{def}}{=} \big\{x^* \in X^*\ : \ \langle x, x^* \rangle_{X,X^*}= \norm{x}_X^2 = \norm{x^*}_{X^*}^2\big\}
\end{equation}
of the dual $X^*$. 

The multivalued operator $J_X \co X \to X^*$ is called the normalized duality mapping of $X$. From the Hahn-Banach theorem, for every $x \in X$, there exists $y^* \in X^*$ with $\norm{y^*}_{X^*} = 1$ such that $\langle x, y^*\rangle_{X,X^*} = \norm{x}_X$. Using $x^*=\norm{x}_Xy^*$, we conclude that $J_X(x)\not= \emptyset$ for each $x \in X$. If the dual space $X^*$ is strictly convex, $J_X$ is single-valued. 

%Indeed, if $x_1^*, x_2^*$ belong to $J(x)$ then 
%\begin{align*}
%\MoveEqLeft
%\norm{x}_X^2 
%\geq \norm{x}_X \Bnorm{\frac{1}{2}(x_1^*+x_2^*)}_{X^*}
%\geq \Big\langle x,\frac{1}{2}(x_1^*+x_2^*)\Big\rangle_{X,X^*}
%=\frac{1}{2}\langle x,x_1^*\rangle_{X,X^*}
%+\frac{1}{2}\langle x,x_2^*\rangle_{X,X^*} \\
%&=\norm{x_1^*}_{X^*}^2 
%=\norm{x_2^*}_{X^*}^2 
%=\norm{x}_X^2.            
%\end{align*} 
%Since $X^*$ is strictly convex, we conclude that $x_1^* = x_2^*$. 
\begin{comment}
If $X$ is reflexive, then $J_X$ is a mapping of $X$ onto $X^*$. Indeed, from the Hahn-Banach theorem, for every $x^* \in X^* -\{0\}$, there is $x \in X$ with $\norm{x}_X = 1$ such that $\langle x, x^* \rangle_{X,X^*} = \norm{x^*}_{X^*}$. Then, $x^*$ must be an element of $J_X(\norm{x^*}_{X^*}x)$. 
\end{comment}

When $X$ is a reflexive strictly convex Banach space with a strictly convex dual space $X^*$, $J_X$ is a singlevalued bijective map and its inverse $J_X^{-1} \co X^* \to X^{**}=X$ is equal to $J_{X^*} \co X^* \to X$. 

%Indeed, for any $x^* \in X$, by definition of $J_{X^*}(x^*)$ we have 
%$$
%\norm{J_{X^*}(x^*)}_X=\norm{x^*}_{X^*}
%\quad \text{and} \quad
%\big\langle J_{X^*}(x^*), x^* \big\rangle_{X,X^*}
%%=\big\langle x^*, J_{X^*}(x^*)\big\rangle_{X^*,X}
%=\norm{x^*}_{X^*}^2.
%$$ 
%We deduce that $J_{X}(J_{X^*}(x^*))=x^*$. Hence $J_XJ_{X^*}=\Id_{X^*}$. By symmetry, we also get $J_{X^*}J_X=\Id_{X}$.

If the Banach space $X$ is a noncommutative $\L^p$-space, we have the following explicit description of the normalized duality mapping of \cite{ArR19}.

\begin{lemma}
\label{Lemma-J-lp}
Suppose $1<p<\infty$. If $h$ belongs to $\L^p(\cal{M})$ with polar decomposition $h=u|h|$ then we have
\begin{align}
\label{comput-J}
J_{\L^p(\cal{M})}(h)
=\norm{h}^{2-p}_{p}|h|^{p-1}u^*.
\end{align}
\end{lemma}

%\begin{proof}
%Recall that $\L^p(\cal{M})$ is strictly convex and smooth \cite[Corollary 5.2]{PiX03}. Hence $J_{\L^p(\cal{M})} \co \L^p(\cal{M}) \to \L^{p^*}(\cal{M})$ is a singlevalued bijective map. We have 
%$$
%\bnorm{\norm{h}^{2-p}_{p}|h|^{p-1}u^*}_{p^*}
%=\norm{h}^{2-p}_{p}\bnorm{|h|^{p-1}}_{p^*}
%=\norm{h}^{2-p}_{p}\norm{h}^{p-1}_{p}
%=\norm{h}_p
%$$ 
%and using the fact that $u^*u$ is the support projection of $|h|$ by \cite[Theorem 9.1.25]{Pal01}, we obtain
%$$
%\big\langle h, \norm{h}^{2-p}_{p}|h|^{p-1}u^*\big\rangle_{\L^p(\cal{M}),\L^{p^*}(\cal{M})}
%=\norm{h}^{2-p}_{p}\tr \big(u|h||h|^{p-1}u^*\big)
%=\norm{h}^{2-p}_{p}\tr(|h|^p)
%=\norm{h}_p^2.
%$$ 
%\end{proof}

\paragraph{Projections} 
The following is \cite[Theorem 3.2.6 p.~297]{Meg98} combined with \cite[5.10 p.~148]{FHHMPZ01}.

\begin{prop}
\label{Prop-Fabian}
Let $X$ be a Banach space and consider a bounded map $P \co X \to X$. Then $P$ is a projection if and only if $P^*\co X^* \to X^*$ is a projection. In this case, $P(X)^*$ is isomorphic to $P^*(X^*)$.
\end{prop}

The following lemma is a generalization of \cite[Lemma 1]{And66} and is fundamental for us.

\begin{lemma}
\label{Lemma-smooth}
Let $X$ be a smooth strictly convex reflexive Banach space. Let $P \co X \to X$ be a contractive projection and $x$ be an element of $X$. Then $x$ belongs to $\Ran P$ if and only if $J_X(x)$ belongs to $\Ran P^*$.
\end{lemma}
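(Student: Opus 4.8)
The plan is to prove both implications of Lemma~\ref{Lemma-smooth} by exploiting that in a smooth strictly convex reflexive Banach space the normalized duality mapping $J_X$ and the duality mapping $J_{X^*}$ of the dual are mutually inverse bijections, together with the elementary fact that for a contractive projection $P$ one has $\norm{P}\leq 1$ and $P^2=P$, hence also $(P^*)^2=P^*$ and $\norm{P^*}\leq 1$, so $P^*$ is a contractive projection on $X^*$.

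\textbf{The forward implication.} Suppose $x\in\Ran P$, i.e.\ $P(x)=x$. Write $x^*=J_X(x)$, which is the unique functional with $\langle x,x^*\rangle_{X,X^*}=\norm{x}_X^2=\norm{x^*}_{X^*}^2$. I would consider $P^*(x^*)$ and show it equals $x^*$ by a norming argument: on one hand
\[
\langle x, P^*(x^*)\rangle_{X,X^*}
=\langle P(x), x^*\rangle_{X,X^*}
=\langle x, x^*\rangle_{X,X^*}
=\norm{x}_X^2,
\]
and on the other hand $\norm{P^*(x^*)}_{X^*}\leq\norm{x^*}_{X^*}$ since $\norm{P^*}\leq 1$. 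Combining these with the Cauchy--Schwarz-type bound $\langle x,P^*(x^*)\rangle_{X,X^*}\leq\norm{x}_X\norm{P^*(x^*)}_{X^*}$ forces $\norm{P^*(x^*)}_{X^*}\geq\norm{x}_X=\norm{x^*}_{X^*}$, hence $\norm{P^*(x^*)}_{X^*}=\norm{x^*}_{X^*}$ and $\langle x,P^*(x^*)\rangle_{X,X^*}=\norm{x}_X^2=\norm{P^*(x^*)}_{X^*}^2$. This says $P^*(x^*)\in J_X^{-1}\text{-preimage}$ — more precisely, that $x$ is norming for $P^*(x^*)$ in the way characterizing $J_{X^*}(P^*(x^*))$; since $X^{**}=X$ and $J_{X^*}=J_X^{-1}$ is single-valued, one concludes $J_{X^*}(P^*(x^*))=x=J_{X^*}(x^*)$, and injectivity of $J_{X^*}$ (bijectivity, really) gives $P^*(x^*)=x^*$, i.e.\ $x^*=J_X(x)\in\Ran P^*$.

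\textbf{The converse implication.} Suppose $J_X(x)\in\Ran P^*$. Here I would apply the already-proved forward implication to the contractive projection $P^*\co X^*\to X^*$ and the element $x^*=J_X(x)\in\Ran P^*$: it yields $J_{X^*}(x^*)\in\Ran(P^*)^*=\Ran P^{**}$. Identifying $X^{**}$ with $X$ (reflexivity) and $J_{X^*}$ with $J_X^{-1}$, we get $J_{X^*}(J_X(x))=x\in\Ran P$, which is exactly what is wanted. The symmetry here — that $X$ is smooth and strictly convex with smooth strictly convex dual, so the hypotheses are self-dual — is what makes this shortcut legitimate.

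\textbf{The main obstacle} is bookkeeping rather than depth: one must be careful about the identification $X=X^{**}$ and the fact that under it $J_{X^*}=J_X^{-1}$ (established in the paragraph preceding the lemma), and about which "Cauchy--Schwarz" inequality is being invoked — namely $|\langle x,f\rangle_{X,X^*}|\leq\norm{x}_X\norm{f}_{X^*}$, with equality governing membership in the duality sets. Smoothness of $X$ is what guarantees $J_X$ is single-valued (equivalently $X^*$ strictly convex), and strict convexity of $X$ is what guarantees $J_{X^*}$ is single-valued; both are needed so that the equalities of norms and pairings above pin down the elements uniquely rather than merely up to the duality set. No serious estimate is required beyond these.
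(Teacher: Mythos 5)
Your proof is correct and follows essentially the same route as the paper: the forward implication via the chain $\norm{x}^2=\langle P(x),J_X(x)\rangle=\langle x,P^*(J_X(x))\rangle\leq\norm{x}\,\norm{P^*(J_X(x))}\leq\norm{x}^2$ forcing $P^*(J_X(x))=J_X(x)$ by single-valuedness of the duality mapping, and the converse by applying the forward part to the contractive projection $P^*$ and using $J_{X^*}=J_X^{-1}$ together with reflexivity. The only cosmetic difference is that the paper closes the forward step by invoking strict convexity of $X^*$ directly (uniqueness of the norming functional of $x$), whereas you route through single-valuedness and injectivity of $J_{X^*}$; these rest on the same facts established before the lemma.
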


\begin{proof}
Recall that by \cite[Theorem 3.2.6 p.~297]{Meg98} the adjoint map $P^* \co X^* \to X^*$ is a contractive projection. Note that $X^*$ is strictly convex by \cite[Proposition 5.4.7 p.~481]{Meg98}.

$\Rightarrow$: Suppose $x \in \Ran P$. We have
\begin{align*}
\MoveEqLeft
\norm{x}_{X}^2           
\ov{\eqref{Def-JX} }{=} \langle x,J_X(x) \rangle_{X,X^*}
=\big\langle P^2(x),J_X(x) \big\rangle_{X,X^*}
=\big\langle P(x),P^*(J_X(x)) \big\rangle_{X,X^*} \\
&=\big\langle x,P^*(J_X(x)) \big\rangle_{X,X^*} 
\leq \norm{x}_{X} \norm{P^*(J_X(x))}_{X^*}
\leq \norm{x}_{X} \norm{P^*(J_X(x))}_{X^*} \\
&\leq \norm{x}_{X}\norm{J_X(x)}_{X^*}
\ov{\eqref{Def-JX} }{=}\norm{x}_{X}^2.
\end{align*}
We infer that $\norm{P^*(J_X(x))}_{X^*}= \norm{x}_{X}$ and $\big\langle x,P^*(J_X(x)) \big\rangle_{X,X^*}=\norm{x}_{X}^2$. Since $X^*$ is strictly convex, we conclude that $P^*(J_X(x))=J_X(x)$, i.e. $J_X(x)$ belongs to $\Ran P^*$. 

$\Leftarrow$: Suppose that $J_X(x)$ belongs to $\Ran P^*$. Since $X$ is strictly convex, the first part applied to $P^*$ instead of $P$ shows that $x=J_{X^*}J_X(x)$ belongs to $\Ran P$.
\end{proof}

We will use the following well-known result. See \cite[Lemma 4.8]{ArK23} for a slightly more general statement.

\begin{lemma}
\label{Lemma-interpolation}
Let $(E_0,E_1)$ be an interpolation couple and let $C$ be a contractively complemented subspace of $E_0+E_1$. We assume that the corresponding contractive projection $P \co E_0+E_1 \to E_0+E_1$ satisfies $P(E_i) \subset E_i$ and that the restriction $P \co E_i \to E_i$ is contractive for $i=0,1$. Then $(E_0 \cap C,E_1 \cap C)$ is an interpolation couple and the canonical inclusion $J \co C \to E_0+E_1$ induces an isometric isomorphism $\tilde{J}$ from $(E_0 \cap C,E_1 \cap C)_\theta$ onto the subspace $P((E_0,E_1)_\theta)=(E_0,E_1)_\theta \cap C$ of $(E_0,E_1)_\theta$. 
\end{lemma}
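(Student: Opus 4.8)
The plan is to prove Lemma \ref{Lemma-interpolation} by verifying the hypotheses of the standard complemented-subspace retract theorem for complex interpolation (the result that if $P$ is a projection bounded on both endpoints of an interpolation couple, then the complemented subspace inherits the interpolation structure). First I would check that $(E_0\cap C, E_1\cap C)$ is genuinely an interpolation couple: both spaces embed continuously into the Hausdorff topological vector space $E_0+E_1$, and they are both subspaces of $C$, which is itself a subspace of $E_0+E_1$; since $C=\Ran P$ is closed (being the range of a bounded projection), the intersections $E_i\cap C$ are closed in $E_i$, hence complete, and they sit compatibly inside $E_0+E_1$. So $(E_0\cap C,E_1\cap C)$ is an interpolation couple and one can form $(E_0\cap C, E_1\cap C)_\theta$.

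The heart of the argument is to identify $(E_0\cap C, E_1\cap C)_\theta$ isometrically with $P((E_0,E_1)_\theta)$. I would argue in two directions. For the ``$\leq$'' direction: given $x \in E_0\cap C + E_1\cap C$ with a defining analytic function $f$ in the Calderón space for the couple $(E_0\cap C, E_1\cap C)$, composing with the inclusions gives an admissible function for $(E_0,E_1)$ with the same boundary bounds, so the inclusion $\tilde J$ is contractive from $(E_0\cap C, E_1\cap C)_\theta$ into $(E_0,E_1)_\theta$, and its image plainly lies in $C$. For the ``$\geq$'' direction: given $x \in (E_0,E_1)_\theta \cap C$ and an admissible function $g$ for $(E_0,E_1)$ with $g(\theta)=x$ and $\max_i \sup_t \norm{g(\cdot+it)}_{E_i}$ close to $\norm{x}_\theta$, apply $P$ pointwise. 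Because $P$ maps $E_i$ into $E_i$ contractively and is bounded on $E_0+E_1$, the function $P\circ g$ is again admissible, now for the couple $(E_0\cap C, E_1\cap C)$, with boundary norms not increased, and $(P\circ g)(\theta)=P(g(\theta))=P(x)=x$. This shows $\norm{x}_{(E_0\cap C,E_1\cap C)_\theta}\leq \norm{x}_{(E_0,E_1)_\theta}$. Combining the two inequalities gives the isometry, and it also shows $P((E_0,E_1)_\theta)=(E_0,E_1)_\theta\cap C$, since $P$ is the identity on $C$ and every element of $(E_0,E_1)_\theta\cap C$ is hit.

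The main technical point — and the place to be careful — is checking that $P\circ g$ really is an admissible (Calderón) function for the intersection couple: one needs $P\circ g$ to be continuous and bounded on the closed strip with values in $(E_0\cap C)+(E_1\cap C)$ and holomorphic in the interior, and one needs the boundary traces $t\mapsto (P\circ g)(j+it)$ to be continuous with values in $E_j\cap C$ tending to $0$ at infinity (in the appropriate sense). Continuity and holomorphy transfer immediately since $P$ is a bounded linear operator on $E_0+E_1$ and on each $E_j$; the boundary regularity transfers because $P\co E_j\to E_j$ is bounded. The only subtlety is that $P(g(j+it))$ lands in $E_j\cap C$ rather than just $E_j$, which is automatic since $\Ran P = C$. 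I would simply note that this is exactly \cite[Lemma 4.8]{ArK} (stated there more generally) and give the short verification above; no genuinely hard estimate is involved, only bookkeeping with the definition of the complex interpolation norm.
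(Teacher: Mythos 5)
Your proof is correct: the two-sided Calder\'on-space argument (the inclusion gives contractivity of $\tilde{J}$ in one direction, and applying $P$ pointwise to an admissible function for $(E_0,E_1)$ gives the reverse inequality together with the identification of the range with $(E_0,E_1)_\theta \cap C = P((E_0,E_1)_\theta)$) is the standard retract argument for contractively complemented subspaces in complex interpolation. The paper gives no proof of Lemma \ref{Lemma-interpolation}, simply citing \cite[Lemma 4.8]{ArK} for a slightly more general statement, and your verification is exactly the argument that citation stands for, with the needed bookkeeping (closedness of $C=\Ran P$, contractivity of $P$ into the intersection couple's sum, and $P$ acting as the identity on $C$) handled correctly.
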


%%%%%%%%%%%%%%%%%%%%%%%%%%%%%%%%%%%%%%%%%%%%%%%%%%%%%%%%%%%%%%%%%%%%%%%%%%%%%%%%%%%%%%%
\section{Jordan algebras and Jordan conditional expectations}
\label{Sec-Jordan}

\paragraph{Various Jordan algebras} A Jordan algebra $A$ over a field $\K$ is a vector space $A$ over $\K$ equipped with a commutative bilinear product that satisfies $(x^2 \circ y)\circ x=x^2\circ (y \circ x)$ for any $x,y \in A$, see e.g. \cite[Definition 1.1 p.~3]{AlS03}. A Jordan algebra $A$ over $\R$ is called formally real \cite[p.~69]{HOS84} if for any $x_1,\ldots,x_n\in A$ the relation $x_1^2+\cdots +x_n^2=0$ implies $x_1=\cdots=x_n=0$. Following \cite[Definition 1.5 p.~5]{AlS03}, a $\JB$-algebra is a Jordan algebra over $\R$ with identity element 1 equipped with a complete norm satisfying the properties $\norm{x \circ y} \leq \norm{x} \norm{y}$, $\norm{x^2}=\norm{x}^2$, $\norm{x^2} \leq \norm{x^2+y^2}$ for any $x,y \in A$. A $\JBW$-algebra is a $\JB$-algebra which is a dual Banach space \cite[p.~111]{HOS84}. In this case, the predual is unique.

A $\JB^*$-algebra \cite[p.~91]{HOS84} \cite[Definition 3.3.1 p.~345]{CGRP14} is a complex Banach space $A$ which is a complex Jordan algebra equipped with an involution satisfying 
\begin{equation}
\label{def-JBstar}
\norm{x \circ y} \leq \norm{x} \norm{y},\quad  \norm{x^*}=\norm{x} 
\quad \text{and} \quad \norm{\{x,x^*,x\}}=\norm{x}^3
\end{equation}
for any $x,y \in A$, where we use the Jordan triple product $\{x,y,z\}\ov{\mathrm{def}}{=} (x \circ y) \circ z+(y \circ z)\circ x-(x \circ z) \circ y$. A $\JBW^*$-algebra \cite[p.~4]{CGRP18} is a $\JB^*$-algebra which is a dual Banach space.

Let $H$ be a complex Hilbert space. A $\JC$-algebra \cite[Definition 2.1.1]{Sto13} \cite[p.~75]{HOS84} is a norm closed real linear subspace of selfadjoint operators of $\B(H)$ closed under the Jordan product $(x,y) \mapsto x \circ y \ov{\mathrm{def}}{=} \frac{1}{2}(xy+yx)$. By \cite[p.~13]{Sto13}, the selfadjoint part $A_\sa$ of a $\mathrm{C}^*$-algebra $A$ is a $\JC$-algebra.

A $\JC^*$-algebra (also called Jordan $\mathrm{C}^*$-algebra) is a norm-closed $*$-subalgebra of $(\B(H), \circ)$ \cite[p.~345]{CGRP14}. % \cite[page 2]{Ble1}. 
If $A$ is a $\JC^*$-algebra then $(A,\circ)$ is a $\JB^*$-algebra by \cite[p.~345]{CGRP14}. A $\mathrm{C}^*$-algebra $A$ is of course a $\JC^*$-algebra. 

An element $p$ of $\JB$-algebra such that $p \circ p=p$ is called a projection.

\paragraph{Centers and factors} Two elements $a$ and $b$ of a Jordan algebra $A$ are said to operator commute \cite[p.~44]{HOS84} if for any $c \in A$ we have $(a \circ c) \circ b= a \circ (c \circ b)$. The centre $\Zc(A)$ of $A$ is the set of all elements of $A$ which operator commute with other element of $A$. By \cite[Lemma 2.5.3 p.~45]{HOS84}, the centre is an associative subalgebra of $A$. Following \cite[p.~115]{HOS84}, if the centre of a $\JBW$-algebra $A$ only consists of scalar multiples of the identity, we say that $A$ is a $\JBW$-factor.

If $p$ is a projection of a $\JW$-algebra, the smallest central projection $q$ such that $q \geq p$ is called the central cover of $p$ and denoted by $c(p)$ \cite[Definition 2.38 p.~56]{AlS03}. We say that a projection $p$ of a $\JW$-algebra $A$ is abelian if the $\JW$-subalgebra $pAp$ is associative \cite[p.~122]{HOS84}. By \cite[Definition 4.24]{AlS03}, this is equivalent to $pAp$ consists of mutually commuting elements.

We refer to \cite[Theorem 6.1.40 p.~362]{CGRP18} for the classification of $\JBW$-factors.

\paragraph{$\JW$-algebras} Recall that a (concrete) $\JW$-algebra \cite[p.~95]{HOS84} \cite[p.~14]{ARU97} \cite[p.~20]{Sto13} is a weak* closed $\JC$-algebra, that is a weak* closed Jordan subalgebra of $\B(H)_\sa$, that is a real linear space of selfadjoint operators which is closed for the weak* topology and closed under the Jordan product $\circ$. Note that a $\JW$-algebra is a $\JBW$-algebra by \cite[p.~95]{HOS84}. Recall that a $\JBW$-algebra is always unital by \cite[Lemma 4.1.7]{HOS84}. By \cite[Proposition 1.49 p.~28]{AlS03}, two elements $x$ and $y$ of a $\JW$-algebra operator commute if and only if $x$ and $y$ commute.

Recall that a $\JW$-algebra $A$ is reversible \cite[Definition 4.24]{AlS03} \cite[p.~25]{HOS84} if it is closed under symmetric products, i.e. if $a_1,\ldots,a_k \in A$ then 
$$
a_1a_2 \cdots a_k+a_ka_{k-1}\cdots a_1 \in A.
$$

\begin{example} \normalfont
\label{spin-factor}
A spin system \cite[6.1.2 p.~135]{HOS84} is a set $\cal{P}$ of at least two symmetries (i.e. selfadjoint unitaries) $\not=\pm \Id$ in $\B(H)$ which satisfy $s \circ t =0$, i.e. $st=-ts$, for any $s, t \in \cal{P}$ with $s \not= t$. If $\ovl{\mathcal{P}}$ is the weak closure of the linear span of $\cal{P}$, then $S \ov{\mathrm{def}}{=} \R\Id \oplus \ovl{\cal{P}}$ is a $\JW$-algebra \cite[pp.~14-15]{ARU97}. These $\JW$-algebras are called (real) spin factors. 

It is possible to give an abstract definition \cite[Definition 3.33 p.~91]{AlS03} of spin factors. Let $H$ be a real Hilbert space of dimension at least 2, and let $\R 1$ denote a one dimensional real Hilbert space with unit vector 1. Let $A \ov{\mathrm{def}}{=}  H \oplus \R 1$ and consider the product $\circ$ on $A$ defined by
\begin{equation}
\label{product-spin-factor}
(a+\lambda 1)  \circ  (b  + \mu 1)  
\ov{\mathrm{def}}{=} \mu a  + \lambda b  +  (\langle a ,  b\rangle  +  \lambda \mu)1, \quad a,b \in H, \lambda,\mu \in \R.
\end{equation}
and the norm $\norm{a+\lambda 1}_A  \ov{\mathrm{def}}{=} \norm{a}_H  + |\lambda|$. Then $A$ is a $\JB$-algebra by \cite[Lemma 6.1.3 p.~136]{HOS84} which is isomorphic to $S$. 

By \cite[Theorem 3]{Top66} \cite[Proposition 6.1.5 p.~137]{HOS84}, two spin factors are isomorphic if and only if their real Hilbert space dimensions are equal. If $\card \cal{P}=4$ or $\card \cal{P} \geq 6$ then the spin factor is non-reversible by \cite[Lemma 2.3.2]{Sto13} \cite[Theorem 6.2.5 p.~141]{HOS84}.
\end{example}

\begin{example} \normalfont
If $\O$ is the algebra of octonions, then the space 
$$
\mathrm{H}_3(\O)
=\left\{\begin{bmatrix}
   a  & \alpha & \beta  \\
   \ovl{\alpha}  & b & \gamma	\\
    \ovl{\beta} & \ovl{\gamma} &  c  \\
\end{bmatrix}: \alpha,\beta,\gamma \in \O, a,b,c \in \R \right\}
$$ 
of hermitian 3x3 matrices with entries in $\O$ equipped with the product $(x,y) \mapsto x \circ y=\frac{1}{2}(xy+yx)$ 
is a unital formally real Jordan algebra by \cite[Proposition 2.9.2 p.~69]{HOS84} of dimension 27. By \cite[Corollary 3.1.7 p.~77]{HOS84} and its proof, we can equip $\mathrm{H}_3(\O)$ with a norm that makes it a $\JB$-algebra. With this structure, $\mathrm{H}_3(\O)$ is a $\JBW$-factor \cite[Theorem 6.1.40 p.~362]{CGRP18}. By \cite[p.~75]{HOS84}, $\mathrm{H}_3(\O)$ is not a $\JW$-algebra. Note that by \cite[Theorem 4.5]{AlS03} every $\JBW$-factor other than $\mathrm{H}_3(\O)$ is a $\JW$-algebra.
\end{example}

\paragraph{Purely exceptional $\JBW$-algebras} Following \cite[7.2.1 p.~155]{HOS84}, we say that a $\JB$-algebra $A$ is purely exceptional \cite[Theorem 7.2.3 p.~155]{HOS84} if there is no nonzero homomorphism of $A$ into a $\JC$-algebra. By \cite[Theorem 7.2.3 p.~155]{HOS84}, a $\JBW$-algebra $A$ can be uniquely decomposed as a direct sum 
\begin{equation}
\label{decompo}
A
=A_{\mathrm{sp}}\oplus A_{\textrm{exp}}
\end{equation}
where $A_{\mathrm{sp}}$ is a $\JW$-algebra and $A_{\textrm{exp}}$ is a purely exceptional $\JBW$-algebra. \textbf{SEE \cite{Shu79}}

\paragraph{Purely real $\JW$-algebras} A real von Neumann algebra \cite[p.~15]{ARU97} is a real $*$-subalgebra $R$ of $\B(H)$ which is weakly closed satisfying $R \cap \i R=\{0\}$. %Rakhimov p1346 
Given a $\JW$-algebra $A$, we denote\footnote{\thefootnote. We warn the reader that this algebra is sometimes denoted $\ovl{\kR(A)}$ in the literature.} by $\kR(A)$ the closure for the weak* topology of the real algebra generated by $A$ in $\B(H)$ (note that this algebra is closed under adjoints). If $A$ is a reversible $\JW$-algebra then $A=\kR(A)_\sa$ by \cite[Lemma 4.25 p.~113]{AlS03} and $A''=\kR(A)+\i \kR(A)$ by \cite[Theorem 1.5]{ARU97} and \cite[Theorem 2.4]{Sto68}.

A $\JW$-algebra $A$ is said purely real \cite[p.~15]{ARU97} if $A$ is reversible and if $\kR(A) \cap \i \kR(A)=\{0\}$. In this case,  by \cite[pp.~21-22]{ARU97} or \cite[Lemma 3.2]{Sto68}, the map $\alpha \co A'' \to A''$, $z+\i y \mapsto z^*+\i y^*$ is a $*$-antiautomorphism of order 2 and it is easy to check that
\begin{equation}
\label{}
A
=\big\{x \in (A'')_\sa : \alpha(x)=x\big\}, \quad 
A_\C=\big\{x \in A'' : \alpha(x)=x\big\}, \quad 
\kR(A)=\big\{x \in A'' : \alpha(x)=x^* \big\}
\end{equation}
where $A_\C \ov{\mathrm{def}}{=} A+\i A$ denotes the complexification of $A$. The map $P_\can \ov{\mathrm{def}}{=} \frac{\Id+\alpha}{2} \co A'' \to A''$ is a positive contractive normal unital projection called canonical projection of $A''$ onto $A_\C$.

We will use the following property \cite[Proposition 1.5.1]{ARU97}.

\begin{prop}
\label{prop-Ayu151}
A purely real $\JW$-factor $A$ is not isomorphic to the selfadjoint part of a von Neumann algebra if and only if the von Neumann algebra $A''$ is a factor.
\end{prop}

\begin{remark} \normalfont
The property of being purely real is not an invariant under isomorphisms. See \cite[p.~1427]{Ayu87}.
\end{remark}

We will use the observation of \cite[Proposition 1.1.11]{ARU97}.

\begin{prop}
\label{Prop-center}
Let $A$ be a purely real $\JW$-algebra. Then $A$ does not not admits summands isomorphic to the selfadjoint part of a von Neumann algebra if and only if $\Zc(A'')=\Zc(A)+\i\Zc(A)$.
\end{prop}

The following is \cite[Theorem 3.2]{HaS95} (see also \cite[Theorem 3.6]{Sto97}).

\begin{thm}
\label{Th-HaS95-3.2}
Let $\cal{M}$ be a von Neumann algebra and $A$ be a reversible $\JW$-subalgebra such that $\kR(A) \cap \i \kR(A) = \{0\}$, $A''=\kR(A) + \i \kR(A)$ and $\Zc(A) = \Zc(A'')_\sa$. Suppose $Q \co \cal{M} \to \cal{M}$ is a faithful normal projection on $A$. Then there exists a unique faithful normal conditional expectation $\E \co \cal{M} \to \cal{M}$ on $A''$ such that if $P_\can \co A'' \to A''$ is the canonical projection on $A$, then $Q =P_\can \circ \E$.
\end{thm}

\paragraph{Type} Following \cite[Definition 3.21 p.~86]{AlS03}, we say that a $\JW$-algebra $A$ of type $\I$ is of type $\I_n$, where $n$ is some cardinal number, if there exists an orthogonal family $p_i$ of $n$ abelian projections in $A$ such that $1=\sum_i e_i$ and $c(e_i)=1$ for any $i$. It follows from \cite[Proposition 5.3.5 p.~131]{HOS84} that any $\JBW$-algebra of type $\I$ can be uniquely decomposed into a direct sum of $\JW$-algebras of type $\I_n$. We refer to \cite[Theorem 3.39 p.~95]{AlS03} for the classification of $\JW$-factors of type $\I$.

\paragraph{Decomposition} An arbitray $\JW$-algebra $A$ can be uniquely reduced \cite[Theorem 13]{Top65} \cite[Theorem 1.1]{Ayu87} into a direct sum of five $\JW$-algebras of the following types:
\begin{enumerate}
\item modular of type $\I$ (type $\I_{\mathrm{fin}}$),
\item properly nonmodular locally modular of type $\I$ (type $\I_\infty$),
\item modular of type $\II$ (type $\II_1$),
\item properly nonmodular of type $\II$ (type $\II_\infty$),
\item purely nonmodular (type III).
\end{enumerate}
We refer to \cite{Top65} for the definitions of used notions. Moreover, each $\JW$-factor belongs to one and only one of these types. Note that if a $\JW$-algebra $A$ coincides with the selfadjoint part of a von Neumann algebra, then this decomposition agrees with the classical classification of von Neumann algebras.

Recall that by \cite[Corollary 4.30]{AlS03}, a $\JW$-algebra $A$ is reversible if and only if the $\I_2$ summand of $A$ is reversible. For the factors, we have the following result \cite[Corollary 6.5]{Sto66}.

\begin{thm}
\label{thm-factors}
A $\JW$-factor is either reversible or totally non reversible (hence of type $\I_2$).
\end{thm}

\begin{example} \normalfont%\cite[Proposition 2.3]{AlS78}
By \cite[Proposition 2.3]{ASS78} \cite[p.~477]{Sta81}, a type $\I_1$ JW-algebra $A$ is isomorphic to the Jordan algebra $\mathrm{C}(X,\R)$ of all real-valued continuous functions on a compact Hausdorff hyperstonian space $X$. \textbf{It is left to the reader use \cite{DDLS16}} that using \cite[Theorem 7.19]{BGL22} that this algebra is isomorphic to $\L^\infty_\R(\Omega)$ for some localizable measure space $\Omega$. There exists a unique factor of type $\I_1$ up to isomorphism, the factor $\R1$, see \cite[p.~96]{AlS03}. 
\end{example}

\begin{example} \normalfont
\label{ex-type-II-JW}
The type $\I_2$ $\JW$-algebras were classified by Stacey in \cite[Theorem 2]{Sta82}. If $A$ is a $\JW$-algebra with separable predual then $A$ has type $\I_2$ if and only if there exist an index set $I$, a family $(\Omega_i)_{i \in I}$ of second countable locally compact spaces, a family $(\mu_i)_{i \in I}$ of Radon measures on the spaces $\Omega_i$ and a family $(S_i)_{i \in I}$ of spin factors, each of dimension strictly greater than 1 and at most countable giving an isomorphism 
$$
A
=\oplus_{i \in I} \L^\infty_\R(\Omega_i,S_i).
$$ 
By \cite[Theorem 6.1.8 p.~138]{HOS84}, if $A$ is a $\JBW$-algebra then $A$ is a $\JBW$ factor of type $\I_2$ if and only if $A$ is isomorphic to a spin factor.
\end{example}

We will use the following result of Haagerup and St\o rmer \cite[Theorem 2.1]{HaS95}.

\begin{thm}
\label{th-proj-finite}
Let $A$ be a $\JW$-algebra of type $\I_2$ and let $A''$ be the von Neumann algebra generated by $A$. Then there exists a faithful normal projection $P \co A'' \to A''$ onto $A$ if and only if $A''$ is finite.
\end{thm}

%\begin{example} \normalfont
%\label{ex-type-III-JW}
%\textbf{Mettre Type $\I_n$ $n \geq 3$}
%\end{example}

\paragraph{$\JW^*$-algebras} A $\JW^*$-algebra is a weak* closed $\JC^*$-subalgebra of $\B(H)$, that is a weak* closed $*$-subalgebra of $(\B(H), \circ)$.  %\cite[page 345]{CGRP2}. %\cite[page 2]{Ble1}. 
If $M$ is a von Neumann algebra then $(M,\circ)$ is obviously a $\JW^*$-algebra. A $\JW^*$-algebra is a $\JBW^*$-algebra. The selfadjoint part of a $\JW^*$-algebra is a $\JW$-algebra. Conversely, if $A$ is a $\JW$-algebra (included in $\B(H)$) then the complexification $A_\C=A+\i A$ is a $\JW^*$-subalgebra of $\B(H)$. For useful results which can be used for transfering results from $\JW$-algebras to $\JW^*$-algebras and vice versa, we refer to \cite[pp.~4-5]{BHK17} and \cite[Corollary 5.1.29 p.~9]{CGRP18}.

We refer to \cite[Proposition 6.1.41 p.~362]{CGRP18} for the classification of $\JBW^*$-factors.

\begin{example} \normalfont
\label{Ex-von-Neumann-algebra}
A von Neumann algebra $\cal{M}$ equipped with the Jordan product
\begin{equation}
\label{Jordan-product}
x \circ y 
\ov{\mathrm{def}}{=} \frac{1}{2}(xy+yx), \quad x,y \in \cal{M} 
\end{equation}
is a $\JW^*$-algebra.
\end{example}

\begin{example} \normalfont
\label{Ex-mat-sym}
By \cite[Proposition 25.2.2 p.~513]{Isi19}, the space $\Sym_n \ov{\mathrm{def}}{=} \{x \in \M_n : x^t=x \}$ of symmetric complex matrices of $\M_n$ is a $\JW^*$-algebra (called  Cartan factor of type $\II_n$) whose the associated $\JW$-algebra is reversible.
%For any integer $n \geq 1$, consider the Cartan factor $\Sym_n \ov{\mathrm{def}}{=} \{x \in \M_n : x^t=x \}$ of type $\II_n$, see e.g. \cite[p.~140]{Isi19}. 
\end{example}

\begin{example} \normalfont
If $n$ is an integer, by \cite[Proposition 25.2.2 p.~513]{Isi19} the space $\Asym_{2n} \ov{\mathrm{def}}{=} \{x \in \M_{2n} : x^t=-x \}$ of skew-symmetric complex matrices of $\M_{2n}$ is equipped with a structure of reversible $\JW^*$-algebra.
\end{example}

\begin{example} \normalfont
By \cite[Proposition 6.1.41 p.~362]{CGRP18}, the complexification $\mathrm{H}_3(\O)_\C=\mathrm{H}_3(\O_\C)$ of the $\JBW$-factor $\mathrm{H}_3(\O)$ is equipped with a structure of $\JBW^*$-factor.
\end{example}

\paragraph{Traces} A trace on a $\JBW$-algebra $A$ is a function $\tau$ on the set $A_+$ of positive
elements of $A$ with values in $[0,+\infty]$ satisfying the following conditions:
\begin{enumerate}
\item $\tau(x+y)=\tau(x)+\tau(y)$ for all $x, y \in A_+$,
\item $\tau(\lambda x)=\lambda\tau(x)$ for any $x \in A_+$ and any $\lambda \geq 0$, where $0.(+\infty)=0$,
\item $\tau(sxs)=\tau(x)$ for all $x \in A_+$ and all arbitrary symmetry $s$ of $A$.
\end{enumerate}
The trace $\tau$ is said to be faithful if $\tau(x)>0$ for all non-zero $x \in A_+$, finite if $\tau(1) < + \infty$, semifinite if given any $x \in A_+$ there is a non-zero $y \in A_+$, $y \leq x$ with $\tau(y) < +\infty$. The trace $\tau$ is normal if for every net $(x_\alpha)$ monotone increasing to $x$, ($x_\alpha, x \in A_+$) we have $\tau(x_\alpha) \to \tau(x)$. We refer to \cite{AyA85}, \cite{Ayu82}, \cite{Ayu92}, \cite{Kin83} an \cite{PeS82} for more information on traces on $\JBW$-algebras.

Every finite trace on a $\JBW$-algebra $A$ can be extended by the linearity to a linear functional on $A$. Thus a finite trace on a $\JBW$-algebra $A$ is a positive linear functional $\tau$ satisfying the condition $\tau(sxs)=\tau(x)$ for all $x \in A$ and all symmetries $s \in A$. By \cite[Lemma 5.18 p.~147]{AlS03}, it is known that the last condition is equivalent to the formula $\tau (a \circ(b \circ c))=\tau((a \circ b) \circ c)$ for all $a,b,c \in A$. By complexification, we obtain a positive linear functional on the associated $\JBW^*$-algebra $\cal{M}$ satisfying 
\begin{equation}
\label{trace}
\tau (a \circ(b \circ c))
=\tau((a \circ b) \circ c), \quad a,b,c \in \cal{M}.
\end{equation}

It is known that the existence of a normal semifinite faithful trace on a $\JW$-algebra $A$ is characterized by a condition on $A$ called local modularity, see \cite[Theorem 1.2.6 p.~33]{ARU97}. The following is \cite[Corollary 1.2.10 p.~35]{ARU97}.

\begin{prop}
\label{prop-Ayu1210}
Let $A$ be a reversible $\JW$-algebra with a normal trace $\tau$. Then the trace $\tau$ can be extended to a normal trace $\tau_1$ on the von Neumann algebra $A''$. If $\tau$ is faitbful (respectively finite or semifinite) then $\tau_1$ is also faithful (respectively finite or semifinite).
\end{prop}

%\vspace{0.2cm}

%recopier:
% \cite[Theorem 6.4]{Sto66},\cite[Lemma 2.3]{Sto68}  \cite[Lemma 4.1]{Sto80a}

 %\cite[Theorem 3.6]{Sto97} 

%\cite[Proposition 2.1.7 (ii)]{Sto13} \cite[Theorem 2.2.2 (i)]{Sto13}

%  \cite[Lemma 2.3]{HaS95}

%\cite[Lemma 4.4.13]{HOS84}

%, \cite[Remark 1.2.11]{ARU97}\cite[Proposition 1.1.11]{ARU97}def enveloping von Neumann algebra modular (finite), semifinite,totally non reversible

\begin{example} \normalfont
\label{ex-trace-spin} Consider a spin factor $S=\R\Id \oplus \ovl{\cal{P}}$ as in Example \ref{spin-factor}. By \cite[Lemma 5.21 p.~149]{AlS03} (see also \cite[Proposition 6.1.7 p.~137]{HOS84} and \cite{Top66}), there exists a unique tracial state $\tau$. Moreover, the same reference shows that $\tau$ is normal, faithful and defined by $\tau(\Id)=1$ and $\tau(x)=0$ for any $x \in \ovl{\cal{P}}$. %Note that by \cite[Corollary 5]{Top66} a state $\varphi$ of $S$ has necessarily the form $\varphi(x)=\tau(h \circ x \circ h)$ for some $h \in S_+$ with $\norm{h}_2=1$.
\end{example}

\begin{example} \normalfont
The restriction of the trace of the matrix algebra $\M_3(\O)$ is a faithful normal finite trace on the $\JW$-factor $\mathrm{H}_3(\O)$ by the proof of \cite[Proposition 2.9.2 p.~69]{HOS84}. 
\end{example}

\paragraph{Projections on Jordan algebras} The notion of positivity in a $\JB^*$-algebra $A$ is defined in \cite[p.~9]{CGRP18}. We say that a positive map $T \co A \to A$ on a $\JB^*$-algebra $A$ is faithful if $T(x)=0$ for some $x \in A_+$ implies $x=0$. Similarly to the case of $\mathrm{C}^*$-algebras \cite[p.~116]{Str81}, we introduce the following definition.

\begin{defi}
Let $\cal{N}$ be a (unital) $\JW^*$-subalgebra of a $\JW^*$-algebra $\cal{M}$. A linear map $Q \co \cal{M} \to \cal{M}$ is called a Jordan conditional expectation on $N$ if it is a unital positive map of range $\cal{N}$ which is $\cal{N}$-modular, that is 
\begin{equation}
\label{Def-cond-exp-JBstar}
Q(x \circ Q(y))
=Q(x) \circ Q(y), \quad x,y \in \cal{M}.
\end{equation}
%Let $A$ be a Jordan subalgebra of $\B(H)$. We say that a contractive projection $\E \co A \to A$ is a Jordan conditional expectation if
%\begin{equation}
%\label{}
%P(a \circ P(b))
%=P(a) \circ P(b),\quad x,y \in A.
%\end{equation}
\end{defi}
With $x=1$ and $y \in \cal{N}$, we obtain $Q(y)=Q(1 \circ y) \ov{\eqref{Def-cond-exp-JBstar}}{=} Q(1) \circ y=1 \circ y=y$. It follows that $Q$ is the identity on $\cal{N}$. Consequently, $Q$ is an idempotent mapping, that is a projection.

We introduce a similar definition for maps acting on $\JW$-algebras.

\begin{defi}
\label{Def-Jordan-cond-exp-JW}
Let $B$ be a (unital) $\JW$-subalgebra of a $\JW$-algebra $A$. A map $\cal{E} \co A \to A$ is called a Jordan conditional expectation on $B$ if it is a unital positive map of range $B$ which is $B$-modular, that is 
\begin{equation}
\label{Def-cond-exp-JB}
\cal{E}(x \circ \cal{E}(y))
=\cal{E}(x) \circ \cal{E}(y), \quad x,y \in A.
\end{equation}
\end{defi}

We have the following elementary link between these two notions.

\begin{lemma}
\label{Lemma-complexification-conditional-expectation}
Let $B$ be a (unital) $\JW$-subalgebra of a $\JW$-algebra $A$. Let $\cal{E} \co A \to A$ be a Jordan conditional expectation then $\cal{E}_\C \co A_\C \to A_\C$, $x_1+\i x_2\mapsto \cal{E}(x_1)+\i \cal{E}(x_2)$ is a Jordan conditional expectation on the $\JW^*$-subalgebra $B_\C$.
\end{lemma}

\begin{proof}
Note that $\cal{E}_\C(A_\C)=\cal{E}_\C(A+\i A)=\cal{E}(A)+\i \cal{E}(A)$. Since $\mathcal{E}(A)$ is $\JW$-subalgebra of $A$, we conclude that $\cal{E}_\C(A_\C)$ is a $\JW^*$-subalgebra of the $\JW^*$-algebra of $A_\C$. Finally, if $x=x_1+\i x_2$ and $y=y_1+\i y_2$ belongs to $A_\C$, a simple computation\footnote{\thefootnote. For any $x,y \in A_\C$, we have with obvious notations
\begin{align*}
\MoveEqLeft
\cal{E}_\C(x \circ \cal{E}_\C(y))
=\cal{E}_\C\big((x_1+\i x_2) \circ \cal{E}_\C(y_1+\i y_2)\big)
=\cal{E}(x_1 \circ \cal{E}(y_1)+\i x_1 \circ \cal{E}(y_2) +\i x_2 \circ \cal{E}(y_1) - x_2 \circ \cal{E}(y_2))\\
&=\cal{E}(x_1 \circ \cal{E}(y_1))+\i \cal{E}(x_1 \circ \cal{E}(y_2)) +\i \cal{E}(x_2 \circ \cal{E}(y_1)) - \cal{E}(x_2 \circ \cal{E}(y_2)) \\
&\ov{\eqref{Def-cond-exp-JB}}{=} \cal{E}(x_1) \circ \cal{E}(y_1)+\i \cal{E}(x_1) \circ \cal{E}(y_2) +\i \cal{E}(x_2) \circ \cal{E}(y_1) - \cal{E}(x_2) \circ \cal{E}(y_2) \\
&=\cal{E}_\C(x_1+\i x_2) \circ \cal{E}_\C(y_1+\i y_2)
=\cal{E}_\C(x) \circ \cal{E}_\C(y).
\end{align*}}  gives \eqref{Def-cond-exp-JBstar}.
\end{proof}

The following is a simple consequence of \cite[Proposition 2.2.9]{Sto13} (see also (\cite[Corollary 1.5]{EfS79} %for the particular case of projections acting on von Neumann algebras 
and \cite{BlN21} for related results) but is fundamental for us. Recall that the definite set $D$ \cite[Definition 2.1.4]{Sto13} of a positive map $Q \co A \to A$ on a $\mathrm{C}^*$-algebra $A$ is defined by 
\begin{equation}
\label{Def-definite-set}
D
\ov{\mathrm{def}}{=} \big\{x \in \cal{M}_\sa : Q(x^2)=Q(x)^2\big\}.
\end{equation}
%Note that by \cite[Lemma 2.2.3]{Sto68}, such a map is unital.

\begin{prop}
\label{Prop-fundamental}
Let $\cal{M}$ be a von Neumann algebra. Let $Q \co \cal{M} \to \cal{M}$ be a weak* continuous faithful unital positive projection. Then $Q$ is a Jordan conditional expectation and the range $Q(\cal{M})$ is a $\JW^*$-subalgebra of $(\cal{M},\circ)$.
\end{prop}

\begin{proof}
Note that $(\cal{M}_\sa,\circ)$ is a $\JW$-algebra. So the restriction $Q|\cal{M}_\sa \co \cal{M}_\sa \to \cal{M}_\sa$ is a weak* continuous faithful unital positive projection. By \cite[Proposition 2.2.8]{Sto13} applied with $e=1$, $Q(\cal{M}_\sa)$ is a $\JW$-subalgebra\footnote{\thefootnote. Note that \cite[Theorem 2.2.2]{Sto68} says that $Q(M_\sa)$ is a $\JC$-subalgebra of $M$.} of $(M_\sa,\circ)$. Moreover, by \cite[Theorem 2.2.2]{Sto13}, $Q(\cal{M}_\sa)$ is contained in the definite set of $Q$. Using \cite[Proposition 2.1.7 (i)]{Sto13} in the first equality, we infer that
\begin{equation*}
Q(x \circ Q(y))
=Q(x) \circ Q^2(y)
=Q(x) \circ Q(y), \quad x,y \in \cal{M}_\sa.
\end{equation*}
Hence $Q|\cal{M}_\sa \co \cal{M}_\sa \to \cal{M}_\sa$ is a Jordan conditional expectation. With  Lemma \ref{Lemma-complexification-conditional-expectation}, we conclude that $Q=(Q|\cal{M}_\sa)_\C \co \cal{M} \to \cal{M}$ is a Jordan conditional expectation on $(Q(\cal{M}_\sa))_\C=Q(\cal{M})$.
%$P(x\circ y)=P(x\circ y)$ for any $x,y \in M_\sa$. 
%Moreover, by \cite[Theorem 2.2.2]{Sto68} $P(M_\sa)$ is  contained in the definite set of $P$. We infer by \cite[Proposition 2.1.7 (i)]{Sto68} that for any $x \in A$ and any $y \in B$, we have 
%\begin{equation}
%\label{Equa-inter-cond}
%Q(x \circ y)
%=Q(x) \circ Q(y)
%=Q(x) \circ y, \quad x \in M_\sa, y \in Q(M_\sa).
%\end{equation}
%\begin{align*}
%\MoveEqLeft
%Q(x \circ y)
%=Q\big((x_1+\i x_2) \circ (y_1+\i y_2)\big)
%=Q(x_1 \circ y_1+\i x_1 \circ y_2 +\i x_2 \circ y_1 - x_2 \circ y_2)\\
%&=Q(x_1 \circ y_1)+\i Q(x_1 \circ y_2) +\i Q(x_2 \circ y_1) - Q(x_2 \circ y_2) \\
%&\ov{\eqref{Equa-inter-cond}}{=} Q(x_1) \circ y_1+\i Q(x_1) \circ y_2 +\i Q(x_2) \circ y_1 - Q(x_2) \circ y_2\\
%&=Q(x_1+\i x_2) \circ (y_1+\i y_2)
%=Q(x) \circ Q(y).
%\end{align*}
%If $x+\i x'$, $y+\i y'$ belongs to $P(M_\sa)+ \i P(M_\sa)$, we obtain
%\begin{align*}
%\MoveEqLeft
%(x+\i x')(y+\i y')            
%=x \cdot_P y+\i x \cdot_P y'+\i x'\cdot_P y-x' \cdot_P y' \\
%&=P(x \circ y)+\i P(x \circ y') +\i P(x' \circ y) - P(x' \circ y') 
%=P(x \circ y+\i x \circ y' +\i x' \circ y - x' \circ y')\\
%&=P\big((x+\i x') \circ (y+\i y')\big).
%\end{align*}
\end{proof}

%\begin{defi}
%\label{}
%Let $M$ be a $\JBW^*$-algebra equipped with a linear form $\varphi$. We say that a normal map $Q \co M \to M$ is Jordan-selfadjoint if 
%$$
%\varphi(T(x) \circ y^*)
%=\varphi(x \circ T(y)^*), \quad x,y \in M. 
%$$
%\end{defi}

%A Jordan conditional expectation is no necessarily selfadjoint in the sense of \cite[]{JMX06} but it is always Jordan-selfadjoint. 

%\begin{prop}
%\label{Prop-cond-exp-JBW-star}
%Let $M$ be a $\JBW^*$-algebra equipped with a linear form $\varphi$. Let $Q \co M \to M$ be a Jordan conditional expectation preserving $\varphi$. Then $Q$ is Jordan-selfadjoint. 
%%If in addition $Q$ is faithful \textbf{(automatic ?)} then for any $t \in \R$, we have
%%\begin{equation}
%%\label{Q-commute-avec-rho-t}
%%Q\rho_t^\varphi
%%=\rho_t^\varphi Q.
%%\end{equation}
%\end{prop}

%\begin{proof}
%Using the preservation of the state by $Q$ in the first and the last equalities, we have for any $x,y \in M$
%\begin{align*}
%\MoveEqLeft
%\varphi(Q(x)\circ y)
%=\varphi(Q(Q(x)\circ y))
%\ov{\eqref{Def-cond-exp-JBstar}}{=} \varphi(Q(x)\circ Q(y))
%\ov{\eqref{Def-cond-exp-JBstar}}{=} \varphi(Q(x \circ Q(y)))
%=\varphi(x \circ Q(y)).            
%\end{align*} 
%\end{proof}

\paragraph{Selfadjoint maps} Let $\cal{M}$ be a von Neumann algebra equipped with a normal semifinite faithful trace $\tau$. Recall that a \textit{positive} normal contraction $T \co \cal{M} \to \cal{M}$ is selfadjoint with respect to $\tau$ \cite[p.~49]{JMX06} if for any $x,y \in \cal{M} \cap \L^1(\cal{M})$ we have $\tau(T(x)y)=\tau(xT(y))$. We have a similar notion for a normal state $\varphi$ on $\cal{M}$ instead of the trace $\tau$, see \cite[p.~122]{JMX06}. It is important to note that if $\varphi$ is a normal state there is nothing to ensure that a $\varphi$-preserving normal Jordan conditional expectation $Q \co \cal{M} \to \cal{M}$ is selfadjoint contrary to the case of classical conditional expectations on von Neumann algebras. But we will show that such a map is Jordan-selfadjoint :
$$
\varphi(Q(x) \circ y)=\varphi(x \circ Q(y)), \quad x,y \in \cal{M}.
$$ 
It is a difficulty for identify the range of its $\L^p$-extension in Section \ref{Sec-a-precise}. Nevertheless, we have the following fundamental observation which will be used in Section \ref{Sec-a-precise}.% even in the case of a Jordan conditional expectation preserving a normal state.

\begin{prop}
\label{Prop-selfadjoint}
Let $\cal{M}$ be a von Neumann algebra equipped with a normal semifinite faithful trace. Let $Q \co \cal{M} \to \cal{M}$ be a trace preserving normal Jordan conditional expectation. Then $Q$ is selfadjoint.   
\end{prop}

\begin{proof} 
Using the preservation of the trace by $Q$ in the third and the sixth equalities, for any $x,y \in \cal{M} \cap \L^1(\cal{M})$, we obtain
\begin{align*}
\MoveEqLeft
\tau(Q(x)\circ y)
=\tau(Q(Q(x)\circ y)) 
\ov{\eqref{Def-cond-exp-JBstar}}{=} \tau(Q(x)\circ Q(y)) 
\ov{\eqref{Def-cond-exp-JBstar}}{=} \tau(Q(x \circ Q(y)))
=\tau(x \circ Q(y)).            
\end{align*}
%The conclusion is a consequence of Proposition \ref{Prop-cond-exp-JBW-star} and 
%$$
%\tau(x\circ y)
%=\frac{1}{2}(\tau(xy)+\tau(yx))
%=\tau(xy).
%If $z \in \L^1(M)$ and $x \in M$, we let $x \circ z=\frac{1}{2}(xz+zx)$. We have 
%$$
%\tr(x \circ z)
%=\frac{1}{2}\tr(xz+zx)
%=\tr(xz).
%$$
%For any $x,y \in M$, we have
%\begin{align*}
%\MoveEqLeft
%\psi(Q(x)y)            
%=\tr(d (Q(x)y) )\\
%&=\\
%&=\\
%&=\\
%&=\tr(dxQ(y))\\
%&=\psi(xQ(y)).
%\end{align*} 
\end{proof}

Let $A$ be a $\JBW$-algebra equipped with a normal faithful state $\varphi$. We say that a normal map $Q \co A \to A$ is Jordan-selfadjoint if 
$$
\varphi(T(x) \circ y)
=\varphi(x \circ T(y)), \quad x,y \in A. 
$$

The following elementary observation is elementary but is crucial for us. This is a consequence of the computation $\tau(x \circ y)=\frac{1}{2}\tau(xy+yx)=\tau(xy)$ which is true for any $x,y \in \cal{M}$. 

\begin{prop}
\label{Prop-Jordan-se=se}
Let $\cal{M}$ be a von Neumann algebra equipped with a normal finite faithful trace $\tau$. A normal map $T \co \cal{M} \to \cal{M}$ is Jordan-selfadjoint map\footnote{\thefootnote. Here $\cal{M}$ is equipped with its canonical structure of $\JW$-algebra.} if and only if $T$ is selfadjoint.
\end{prop}

%\begin{proof}
%$\Rightarrow$: It suffices to observe that $\tau(x \circ y)=\frac{1}{2}\tau(xy+yx)=\tau(xy)$
 %
%$\Leftarrow$: For any $x,y \in M$, we have
%$$
%\tau(x \circ T(y))
%=\frac{1}{2}\tau(xT(y)+T(y)x)
%=\frac{1}{2}\tau(T(x)y+yT(x))
%=\tau(T(x) \circ y)).
%$$
%\end{proof}

\paragraph{Existence of Jordan conditional expectations}
Let $A$ be a $\JBW$-algebra and $B$ a $\JBW$-subalgebra of $A$. Suppose that $\tau$ is a faithful normal tracial state on $A$. If we also denote by $\tau$ the restriction of $\tau$ on $B$, it is essentially showed in \cite[Theorem 4.2]{HaS95} (combined with \cite[Remark 3.7]{HaH84}) that there exists a faithful normal Jordan conditional expectation $Q \co A \to A$ onto $B$ such that $\tau \circ Q=\tau$. See \cite[Theorem p.~78]{Edw86} for a previous prelimiary result without proof. By complexification, we obtain the following result.

\begin{prop}
\label{prop-Jordan-conditional-expectation-existence-bis}
Let $\cal{M}$ and $\cal{N}$ be $\JBW^*$-algebras such that $\cal{N}$ is a subalgebra of $\cal{M}$. Let $\tau$ be a normalized normal finite faithful trace on $\cal{M}$. Then there exists a trace preserving normal faithful Jordan conditional expectation $Q \co \cal{M} \to \cal{M}$ on $\cal{N}$.
\end{prop}

%%%%%%%%%%%%%%%%%%%%%%%%%%%%%%%%%%%%%%%%%%%%%%%%%%%%%%%%%%%%%%%%%%%%%%%%%%%%%%%%%%%%%%%%%%%%%
%\section{Some results on projections on strictly convex Banach spaces}
%\label{Section-strictly}

%\cite{BaP1} \cite{BLi1} \cite{Asp1} \cite{Cud1} \cite{Cior1} \cite{Pat1}

%%%%%%%%%%%%%%%%%%%%%%%%%%%%%%%%%%%%%%%%%%%%%%%%%%%%%%%%%%%%%%%%%%%%%%%%%%%%%%%%%%
\section{A lifting of contractive positive projections on noncom. $\L^p$-spaces}
\label{Sec-a-local}
%%%%%%%%%%%%%%%%%%%%%%%%%%%%%%%%%%%%%%%%%%%%%%%%%%%%%%%%%%%%%%%%%%%%%%%%%%%%%%%%%%

In this section, we prove Theorem \ref{main-th-ds-intro-bis}. We follow \cite{ArR19} with some clarifications. The dependence with respect to $\varphi$ of the Haagerup trace $\tr_\varphi$ defined in \eqref{Def-tr} is the source of technical complications. 

Suppose $1 < p < \infty$. Let $\cal{M}$ be a $\sigma$-finite ($=$ countably decomposable) von Neumann algebra and $P \co \L^p(\cal{M}) \to \L^p(\cal{M})$ be a positive contractive projection. We define the support $s_P$ of $\Ran P$ as the supremum in $\cal{M}$ of the supports of the positive elements in $\Ran P$:
\begin{equation}
\label{support-s(P)}
s_P\ov{\mathrm{def}}{=}\bigvee_{h \in \Ran P, h \geq 0} s(h).
\end{equation}
%It is easy\footnote{\thefootnote. It is obvious that $s_P \leq \vee_{h \in \Ran P} s_\ell(h)$. For the converse inclusion, note that if $h \in \Ran P$, we can write $h=h_1-h_2+\i(h_3-h_4)$ with $h_1,h_2,h_3,h_4 \geq 0$. Hence $h=P(h_1)-P(h_2)+\i(P(h_3)-P(h_4))$ where $P(h_1),P(h_2),P(h_3),P(h_4) \geq 0$.} to check that
%$$
%s_P=\bigvee_{h \in \Ran P} s_\ell(h).
%$$

\begin{lemma}
\label{Lemma-sP}
For any $y \in \L^p(\cal{M})$ we have $P(y)=P(s_Pys_P)$.
\end{lemma}

\begin{proof}
Note that $P^* \co \L^{p^*}(\cal{M}) \to \L^{p^*}(\cal{M})$ is also a positive contractive projection. Recall again that a noncommutative $\L^p$-space is a smooth strictly convex reflexive Banach space (if $1<p<\infty$). By Lemma \ref{Lemma-smooth}, the map $J_{\L^p(\cal{M})}$ of \eqref{comput-J} induces a bijection from $\Ran P$ onto $\Ran P^*$ and this map and its inverse preserve the positivity. This remark and the formula \eqref{support-s(P)} imply that  $s_{P^*}=s_P$. Thus for every $y \in \L^p(\cal{M})$, we obtain that
\begin{align*}
\MoveEqLeft
\bnorm{P(y)}_{\L^p(M)}^2
\ov{\eqref{Def-JX}}{=} \tr\big(P(y)J_{\L^p(M)}(P(y))\big)
=\tr\big(yP^*(J_{\L^p(M)}(P(y)))\big)
=\tr\big(yJ_{\L^p(M)}(P(y))\big) \\ 
&=\tr\big(y s_{P^*}J_{\L^p(M)}(P(y))s_{P^*}\big)
=\tr\big(y s_PJ_{\L^p(M)}(P(y))s_P\big)
= \tr\big(s_P y s_P J_{\L^p(M)}(P(y))\big).
\end{align*}
Hence if $y s_P=0$ or $s_Py=0$ we have $P(y)=0$. Now, for any $y \in \L^p(\cal{M})$, we obtain\footnote{\thefootnote. Note that $s_P(1-s_P)y(1-s_P)=0$ and similarly for the others.} 
\begin{align*}
%\MoveEqLeft
P(y)
&=P\big((1-s_P\big)y\big(1-s_P)\big)+ P\big((1-s_P)ys_P\big)+ P\big(s_P y(1-s_P)\big)+P(s_Pys_P) \\
&=P(s_Pys_P).
\end{align*}
\end{proof}

Let $\chi$ be a faithful normal state on $\cal{M}$ which exists by \cite[Exercise 7.6.46]{KaR97b}. Here, we use the concrete realization $\L^p(\cal{M})=\L^p(\cal{M},\chi)$. The following is \cite[Proposition 3.4]{ArR19}

\begin{lemma}
\label{lemma-second}
There exists a positive element $k$ of $\Ran P$ such that $s(k)=s(P)$. 
\end{lemma}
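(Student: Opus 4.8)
The plan is to construct $k$ as a weighted countable sum of positive elements of $\Ran P$ whose supports exhaust $s_P$, the passage from the arbitrary supremum in \eqref{support-s(P)} to a countable one being the only place where $\sigma$-finiteness is genuinely used.

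First I would record two elementary facts. Since $\Ran P=\ker(\Id_{\L^p(M)}-P)$ is the kernel of a bounded operator, it is a closed linear subspace of $\L^p(M)$; being a linear subspace, it is stable under sums, so the sum of two positive elements of $\Ran P$ is again a positive element of $\Ran P$. Using that $s(h_1+h_2)=s(h_1)\vee s(h_2)$ for positive operators $h_1,h_2$, this shows that the family $\mathcal{F}\ov{\mathrm{def}}{=}\{s(h):h\in\Ran P,\ h\geq 0\}$ of projections of $M$ is upward directed, with supremum $s_P$. Now I would invoke normality of the faithful state $\chi$: this gives $\chi(s_P)=\sup_{h\in\Ran P,\,h\geq 0}\chi(s(h))$, so there is a sequence of norm-one positive elements $h_n\in\Ran P$, which by directedness of $\mathcal{F}$ we may choose so that $(s(h_n))_n$ is nondecreasing, with $\chi(s(h_n))\to\chi(s_P)$. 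Setting $e\ov{\mathrm{def}}{=}\bigvee_n s(h_n)\leq s_P$, normality gives $\chi(e)=\lim_n\chi(s(h_n))=\chi(s_P)$, and since $\chi$ is faithful and $e\leq s_P$ this forces $e=s_P$.

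Finally I would set $k\ov{\mathrm{def}}{=}\sum_{n=1}^{\infty}2^{-n}h_n$. The series converges absolutely in $\L^p(M)$ since $\sum_n 2^{-n}\norm{h_n}_p=1$; each partial sum is a positive element of the closed subspace $\Ran P$, hence $k\in\Ran P$ and $k\geq 0$. For each $n$ we have $k-2^{-n}h_n=\sum_{m\neq n}2^{-m}h_m\geq 0$, so $2^{-n}h_n\leq k$ and therefore $s(h_n)=s(2^{-n}h_n)\leq s(k)$; taking the supremum over $n$ yields $s_P=e=\bigvee_n s(h_n)\leq s(k)$. Conversely, $k$ is a positive element of $\Ran P$, so $s(k)\leq s_P$ by the very definition \eqref{support-s(P)} of $s_P$. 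Hence $s(k)=s_P$, as desired. The one routine point to verify along the way is that $0\leq a\leq b$ in $\L^p(M)\subseteq\L^0(\mathcal{M},\tau)$ implies $s(a)\leq s(b)$, which follows from the standard order properties of positive $\tau$-measurable operators (the inequality forces $\ker b\subseteq\ker a$, hence $s(a)\leq s(b)$). The main obstacle is really just the extraction of the countable cofinal family, and this is handled cleanly by the directedness of $\mathcal{F}$ together with the normality and faithfulness of $\chi$.
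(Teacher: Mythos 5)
Your proof is correct and follows essentially the same route as the paper: both reduce to a countable family of positive elements of $\Ran P$ by means of the faithful normal state $\chi$ and then take the weighted sum $k=\sum_{n}2^{-n}h_n$, whose support dominates all the $s(h_n)$. The only (minor) difference lies in the last verification: the paper attains the supremum of $\chi(s(\cdot))$ over positive elements of $\Ran P$ and concludes by a maximality/contradiction argument using only monotonicity and faithfulness of $\chi$, whereas you invoke normality of $\chi$ on the upward directed family of supports to get $\chi(s_P)=\sup_h\chi(s(h))$ and then faithfulness — both arguments are sound.
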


We consider a positive element $k$ as in Lemma \ref{lemma-second}. We have $P(k)=k$ and $k \in \L^p(\cal{M},\chi)_+$. We can suppose that $k \not=0$. Note that $s(k)Ms(k) \to \C$, $x \mapsto \tr_{\chi}(k^p x)$ is a faithful normal linear functional on $s(k)Ms(k)$. Using the procedure \eqref{Extension-weight1}, we can consider a normal faithful linear functional $\varphi$ on $\cal{M}$ such that $s(k)$ belongs to the centralizer of $\varphi$ and such that the reduced state $\varphi_{s(k)}$ on $s(k)Ms(k)$ satisfies
\begin{equation}
\label{Divers-2367}
\varphi_{s(k)}(x)
=\tr_{\chi}(k^p x), \quad x \in s(k)Ms(k).
\end{equation}
Multiplying $k$ by a constant, we can suppose that $\varphi$ is a state. From \eqref{kappa}, we have a canonical map $\kappa$ which induces an order and isometric identification $\kappa \co \L^p(\cal{M},\chi) \to \L^p(\cal{M},\varphi)$ for all $p$. We let
\begin{equation}
\label{Def-h-psi-h}
h \ov{\mathrm{def}}{=} \kappa(k),
\qquad \psi \ov{\mathrm{def}}{=}\varphi_{s(k)}
\end{equation}
and $M_h \ov{\mathrm{def}}{=}s(h)\cal{M}s(h)$. By transport of structure, we have\footnote{\thefootnote. Here $P \co \L^p(\cal{M},\varphi) \to \L^p(M,\varphi)$.} 
\begin{equation}
\label{Equa-inter-6789-1}
P(h)=h 
\quad \text{and} \quad s(h)=s(k).
\end{equation}
In particular, with Lemma \ref{Lemma-sP} and Lemma \ref{lemma-second}, we obtain the first point of Theorem \ref{main-th-ds-intro-bis}. Furthermore, For any $x \in \cal{M}_h$ we have
\begin{equation}
\label{Equa-inter-6789}
\psi(x)
\ov{\eqref{Def-h-psi-h}}{=} \varphi_{s(k)}(x)
\ov{\eqref{Divers-2367}}{=} \tr_{\chi}(k^p x)
\ov{\eqref{Trace-preserving}}{=} \tr_{\varphi}(\kappa(k^p x))
=\tr_{\varphi}(\kappa(k)^p x)
\ov{\eqref{Def-h-psi-h}}{=} \tr_{\varphi}(h^p x).
\end{equation}
%\textbf{Observe that the first equality gives $\psi(s(h)xs(h)) \ov{\eqref{Divers-2367}}{=} \tr_{\varphi_{s(h)}}(h^p x)$ by Lemma \ref{Lemme-trace coincide}.}
%Consider a normal semifinite faithful weight $\chi$ on $M$ such that $s(h)$ belongs to the centralizer of $\chi$ and such that the reduced weight $\psi_h \ov{\mathrm{def}}{=} s(h)\chi s(h)$ (see \eqref{Poids-ephie}) on $M_h\ov{\mathrm{def}}{=}s(h)Ms(h)$ satisfies
%\begin{equation}
%\label{Def-etat-psih}
%\psi_h(s(h)xs(h))
%=\frac{1}{\norm{h}_p^p} \tr_{\chi}(\tilde{\kappa}(h)^p x), \quad x \in M
%\end{equation}
%where $\hat{\kappa} \co \L^1(M,\varphi) \to \L^1(M,\chi)$ is the canonical map. Note that this weight $\psi_h$ is a normal faithful state.
Since $s(h)$ belongs to the centralizer of $\varphi$, the noncommutative $\L^p$-space $\L^p(\cal{M}_h) \ov{\mathrm{def}}{=} \L^p(\cal{M}_h,\psi)$ can be identified order and isometrically with the subspace $s(h)\L^p(M,\varphi)s(h)$ of $\L^p(\cal{M},\varphi)$. By applying Theorem \ref{Th-relevement-cp} to the restriction $P|_{\L^p(\cal{M}_h)} \co \L^p(\cal{M}_h) \to \L^p(\cal{M})$ and to the positive element $h$ of $\L^p(\cal{M}_h)$ which has support $s(h)=1_{\cal{M}_h}$, we see that there exists a unique linear map $Q \co \cal{M}_h \to s(P(h))\cal{M}s(P(h))=\cal{M}_h$ such that
\begin{equation}
\label{relevement-E_h}
\quad P\big(h^{\frac{1}{2}}xh^{\frac{1}{2}}\big)
=h^{\frac{1}{2}}Q(x)h^{\frac{1}{2}}, \qquad x \in \cal{M}_h.
\end{equation}
Moreover, this map $Q$ is unital, contractive, normal and positive. 

\begin{lemma}
\label{Lemma-faithful}
The map $Q \co \cal{M}_h \to \cal{M}_h$ is faithful.
\end{lemma}

\begin{proof}
Recall that for $1<p<\infty$, the norm of the space $\L^p(\cal{M})$ 
%is uniformly convex by \cite[Cor. 5.2]{PiX}, hence strictly convex by \cite[Proposition 5.2.6]{Meg}. It is easy to deduce that the $\L^p$-norm 
is strictly monotone\footnote{\thefootnote. That means that if $0 \leq x \leq y$ with $x \not=y$ then we have $\norm{x}_p < \norm{y}_p$.}. Now, we will show that if $h_0 \in \L^p(\cal{M})$ satisfy $0 \leq h_0 \leq h$ and $P(h_0)=0$ then $h_0=0$. We have
$$
h
\ov{\eqref{Equa-inter-6789-1}}{=} P(h)
=P(h)-P(h_0)
=P(h-h_0)
$$ 
Since $P$ is contractive, we deduce that $\norm{h}_p = \norm{P(h-h_0)}_p \leq \norm{h-h_0}_p$. Since $0 \leq h-h_0 \leq h$ we infer that $\norm{h}_p=\norm{h-h_0}_p$ and finally $h_0=0$ by strict monotonicity of the $\L^p$-norm. 

Now, we can deduce that $Q$ is faithful. Indeed, if $x \in \cal{M}_h^+$ and $Q(x)=0$ we have
$$
P\big(h^{\frac{1}{2}}xh^{\frac{1}{2}}\big)
\ov{\eqref{relevement-E_h}}{=} h^{\frac{1}{2}} Q(x) h^{\frac{1}{2}}
=0.
$$
By \cite[1.6.9]{Dix77}, we have $0 \leq x \leq \norm{x}_\infty$ so $0 \leq h^{\frac{1}{2}}xh^{\frac{1}{2}} \leq \norm{x}_{\infty}h$. We see that $h^{\frac{1}{2}}xh^{\frac{1}{2}}=0$ by the first part of the proof. Since $\cal{M}_h=s(h)Ms(h)$, we conclude by Lemma \ref{lemma2-GL} that $x=0$. 
\end{proof}

\begin{lemma}
The map $Q \co \cal{M}_h \to \cal{M}_h$ is a projection.
\end{lemma}

\begin{proof}
For any $x \in \cal{M}_h$, we have 
$$
P\big(h^{\frac{1}{2}}xh^{\frac{1}{2}}\big)
=P^2\big(h^{\frac{1}{2}}xh^{\frac{1}{2}}\big)
\ov{\eqref{relevement-E_h}}{=} P\big(h^{\frac{1}{2}} Q(x) h^{\frac{1}{2}}\big)
\ov{\eqref{relevement-E_h}}{=} h^{\frac{1}{2}} Q^2(x) h^{\frac{1}{2}}.
$$
Using the uniqueness of $Q$ given by Theorem \ref{Th-relevement-cp}, we infer that $Q^2=Q$, i.e. $Q$ is a projection. 
\end{proof}

Now, we prove that $Q$ is $\psi$-invariant i.e. the third point of Theorem \ref{main-th-ds-intro-bis}. 

\begin{lemma} 
\label{Lemma-prservation-state}
We have $\psi \circ Q=\psi$.
\end{lemma}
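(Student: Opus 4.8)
The plan is to deduce the identity $\psi \circ Q = \psi$ from the contractivity of $P$ together with the explicit form of the duality map on $\L^p$. First I would recall that $P(h) = h$ and that, by Lemma \ref{Lemma-J-lp}, since $h$ is positive with support $s(h) = 1_{M_h}$ and $\norm{h}_p^p = \psi(1) = 1$ (after our normalization), the normalized duality map sends $h$ to $J_{\L^p(M)}(h) = h^{p-1}$, an element of $\L^{p^*}(M_h) \subset \L^{p^*}(M)$. Because $h \in \Ran P$, Lemma \ref{Lemma-smooth} gives that $J_{\L^p(M)}(h) = h^{p-1}$ belongs to $\Ran P^*$, i.e. $P^*(h^{p-1}) = h^{p-1}$.

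Next I would use this fixed-point property to pair against the elements $h^{\frac12} x h^{\frac12}$. For any $x \in M_h$, we compute
\begin{align*}
\tr_\varphi\big(h^{p-1} h^{\frac12} Q(x) h^{\frac12}\big)
&\ov{\eqref{relevement-E_h}}{=} \tr_\varphi\big(h^{p-1} P(h^{\frac12} x h^{\frac12})\big)
= \big\langle P(h^{\frac12} x h^{\frac12}), h^{p-1} \big\rangle \\
&= \big\langle h^{\frac12} x h^{\frac12}, P^*(h^{p-1}) \big\rangle
= \tr_\varphi\big(h^{p-1} h^{\frac12} x h^{\frac12}\big).
\end{align*}
Using the tracial property of $\tr_\varphi$ to cycle the factors, the left-hand side equals $\tr_\varphi\big(h^{\frac12} h^{p-1} h^{\frac12} Q(x)\big) = \tr_\varphi\big(h^p Q(x)\big)$ and similarly the right-hand side equals $\tr_\varphi(h^p x)$. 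Hence $\tr_\varphi(h^p Q(x)) = \tr_\varphi(h^p x)$ for every $x \in M_h$, which is exactly point 3 of Theorem \ref{main-th-ds-intro-bis}. Finally, by \eqref{Equa-inter-6789} we have $\psi(x) = \tr_\varphi(h^p x)$ for all $x \in M_h$, so the displayed identity reads $\psi(Q(x)) = \psi(x)$, i.e. $\psi \circ Q = \psi$.

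The only genuinely delicate points are bookkeeping ones: one must check that $h^{\frac12} x h^{\frac12}$ indeed lies in $\L^p(M_h) = s(h)\L^p(M,\varphi)s(h)$ so that the pairing with $h^{p-1} \in \L^{p^*}(M_h)$ is legitimate, and that the various products $h^{p-1} h^{\frac12} \cdots$ are handled correctly as elements of $\L^1(M,\varphi)$ via the duality and multiplication properties recalled in Section \ref{Haagerup-noncommutative}; here the fact that $s(h)$ belongs to the centralizer of $\varphi$ and that $h \in \L^p(M_h)$ makes all powers $h^\alpha$ commute appropriately and stay supported under $s(h)$. I expect the main (minor) obstacle to be justifying the use of the tracial identity $\tr_\varphi(ab) = \tr_\varphi(ba)$ for the unbounded products involved, which follows from \cite[Chap. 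II, Prop. 21]{Terp} applied to $h^{p-1}h^{1/2} \in \L^{p^*}(M_h)$ paired against $h^{1/2}Q(x), h^{1/2}x \in \L^p(M_h)$, together with the density statement in Lemma \ref{lemma2-GL} if one prefers to argue first on $h^{\frac12}Mh^{\frac12}$ and then pass to the limit; but since we only test on $x \in M_h$ this passage is not even needed.
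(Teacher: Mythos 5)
Your proof is correct and follows essentially the same route as the paper: you obtain the fixed point $P^*(h^{p-1})=h^{p-1}$ from Lemma \ref{Lemma-smooth} together with Lemma \ref{Lemma-J-lp}, then pair against $h^{\frac{1}{2}}xh^{\frac{1}{2}}$ using \eqref{relevement-E_h}, the duality bracket and the tracial property, exactly as in the paper's argument. (The normalization $\norm{h}_p^p=\psi(1)=1$ you invoke is not actually arranged in the text — only $\varphi$ is normalized to a state on $M$ — but this is immaterial, since the scalar factor $\norm{h}_p^{2-p}$ in $J_{\L^p(M)}(h)$ cancels in the fixed-point identity.)
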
  

\begin{proof}
Since $h$ is positive, by Lemma \ref{Lemma-J-lp}, we have $J_{\L^p(\cal{M})}(h)=\norm{h}^{2-p}_{p}h^{p-1}$. By \cite[Corollary 5.2]{PiX03}, the Banach space $\L^p(\cal{M}_h)$ is smooth and strictly convex. Using the contractive dual map $P^* \co \L^{p^*}(\cal{M}) \to \L^{p^*}(\cal{M})$ and Lemma \ref{Lemma-smooth}, we see that $P^*(\norm{h}^{2-p}_{p}h^{p-1})=\norm{h}^{2-p}_{p}h^{p-1}$, that is
\begin{equation}
\label{equa-inter-400}
P^*(h^{p-1})
=h^{p-1}.
\end{equation}
For any $k \in \L^p(\cal{M}_h)$, it follows that
\begin{equation}
\label{equa-inter-401}
\tr_\varphi\big(h^{p-1}P(k)\big)
=\tr_\varphi\big(P^*(h^{p-1})k\big)
\ov{\eqref{equa-inter-400}}{=}\tr_\varphi(h^{p-1} k).
\end{equation}
%In particular, for any $x \in M_h$, we have
%\begin{align*}
%\MoveEqLeft
%\tr\big(h^{p-1}(hQ_h(x)+Q_h(x) h)\big)
%\ov{\eqref{relevement-E_h}}{=} \tr\big(h^{p-1}P(h x+x h)\big)
%=\tr\big(h^{p-1}P(h x)\big)+\tr(h^{p-1} P(x h))\\
%&\ov{\eqref{equa-inter-401}}{=} \tr\big(h^{p-1} h x\big)+\tr(h^{p-1} x h)
%=2\tr\big(h^p x)\big),
%\end{align*} 
%that is $\tr\big(h^pQ_h(x)\big)=\tr(h^p x)$ hence $\psi_h\big(Q_h(x)\big)=\psi_h(x)$.
In particular, for any $x \in \cal{M}_h$, we have
\begin{align*}
\MoveEqLeft
\psi(Q(x)) \ov{\eqref{Equa-inter-6789}}{=} \tr_\varphi\big(h^pQ(x)\big) 
=\tr_\varphi\big(h^{p-1}(h^{\frac{1}{2}}Q(x) h^{\frac{1}{2}})\big)
\ov{\eqref{relevement-E_h}}{=} \tr_\varphi\big(h^{p-1}P(h^{\frac{1}{2}} x h^{\frac{1}{2}})\big) \\
&\ov{\eqref{equa-inter-401}}{=} \tr_\varphi\big(h^{p-1}(h^{\frac{1}{2}} x h^{\frac{1}{2}})\big)
=\tr_\varphi(h^p x)
\ov{\eqref{Equa-inter-6789}}{=} \psi(x).
\end{align*}
%using the notation $u_h=s(h) P'(s(h)) s(h)$
%In the case $p=1$, using the contractivity of $P^* \co M \to M$, we note that by \cite[1.6.9]{Dix}
%$$
%0 \leq P^*(s(h)) 
%\leq \bnorm{P^*(s(h))}_\infty 1 
%\leq \norm{s(h)}_\infty \leq  1.
%$$
%Hence $0 \leq s(h)P^*(s(h))s(h) \leq s(h)$. Moreover, we have 
%\begin{align*}
%\norm{h}_1\psi_h(s(h))
%&\ov{\eqref{Def-etat-psih}}{=} \tr\big(hs(h)\big)
%=\tr\big(P(h)s(h)\big)\\
%&=\tr\big(h P^*(s(h))\big) 
%=\tr\big(h s(h)P^*(s(h))s(h)\big) 
%\ov{\eqref{Def-etat-psih}}{=}\norm{h}_1\psi_h\big(s(h)P^*(s(h))s(h)\big).
%\end{align*}
%Since $\psi_h$ is faithful on $M_h$ it results that $s(h)P^*(s(h))s(h)=s(h)$. If $x \in \L^1(M)$ we deduce that
%\begin{align}    
%\label{Divers-3000}
%\tr \big(s(h)P(s(h)xs(h))s(h)\big)
	%&=\tr \big(P(s(h)xs(h))s(h)\big)
	%=\tr \big(s(h)xs(h) P^*(s(h))\big)\\
	%&=\tr \big(xs(h) P^*(s(h)) s(h)\big)
	%=\tr\big(xs(h)\big)
	%=\tr\big(s(h)xs(h)\big). \nonumber
%\end{align}
%If $y \in M_h$ we have $h^{\frac{1}{2}}yh^{\frac{1}{2}} \in \L^1(M_h) \subset \L^1(M)$ and we infer that
%\begin{align*}
%\norm{h}_1\psi(y)
%&\ov{\eqref{Def-etat-psih}}{=} \tr(hy)
%=\tr \big(h^{\frac{1}{2}}yh^{\frac{1}{2}}\big)
%\ov{\eqref{Divers-3000}}{=}\tr\big(s(h)P\big(h^{\frac{1}{2}}yh^{\frac{1}{2}}\big)s(h)\big)
%\ov{\eqref{relevement-E_h}}{=} \tr\big(s(h)h^{\frac{1}{2}}Q_h(y)h^{\frac{1}{2}}s(h)\big) \\
%&=\tr \big(h^{\frac{1}{2}} Q_h(y) h^{\frac{1}{2}}\big)
%=\tr\big(hQ_h(y)\big)
%\ov{\eqref{Def-etat-psih}}{=} \norm{h}_1\psi_h\big(Q_h(y)\big).
%\end{align*}
%Then we deduce that $\psi_h \circ Q_h=\psi_h$.
\end{proof}

So, we have proved the ``only if'' part of Theorem \ref{main-th-ds-intro-bis}.

\vspace{0.2cm}

Conversely, suppose that the conditions of Theorem \ref{main-th-ds-intro-bis} are satisfied. We can suppose $\norm{h}_p=1$. We introduce the reduced weight $\psi$ on the von Neumann algebra $\cal{M}_h \ov{\mathrm{def}}{=} s(h)\cal{M}s(h)$ induced by the state $\tr_{\varphi}(h^p\,\cdot)$.

In the sequel, we will use the density operator $h_{\psi} \in \L^1(\cal{M}_h,\psi)$ associated with the weight $\psi$ on $\cal{M}_h$. From \eqref{kappa}, we have a canonical map $\kappa$ which induces an order and isometric identification $\kappa \co \L^p(\cal{M}_h,\psi) \to \L^p(\cal{M}_h,\varphi_{s(h)})$ for all $p$. If $x \in \cal{M}_h$, using Lemma \ref{Lemme-trace coincide} in the third equality, we see that
\begin{equation}
\label{Magic-Formula}
\tr_{\psi}(h_{\psi} x) 
\ov{\eqref{HJX-1.13}}{=} \psi(x) 
= \tr_{\varphi}(h^p x)
=\tr_{\varphi_{s(h)}}(h^p x)%\, \cdot
\ov{\eqref{Trace-preserving}}{=} \tr_{\psi}(\kappa^{-1}(h^p x))
=\tr_{\psi}(\kappa^{-1}(h)^p x).
\end{equation}
We conclude that
\begin{equation}
\label{Magic-formula-2}
h_{\psi}
=\kappa^{-1}(h)^p.
\end{equation}
With the condition 3 of Theorem \ref{main-th-ds-intro-bis}, we can consider by \eqref{Map-extension-Lp} with $\cal{M}$ instead of $\mathfrak{m}_\varphi$ the contractive positive operator $Q_p \co \L^p(\cal{M}_h,\psi) \to \L^p(\cal{M}_h,\psi)$ induced by the map $Q \co \cal{M}_h \to \cal{M}_h$ and defined by
\begin{equation}
\label{Def-esperance-h}
Q_p\big(h_{\psi}^{\frac{1}{2p}} xh_{\psi}^{\frac{1}{2p}}\big)
\ov{\eqref{Map-extension-Lp}}{=} h_{\psi}^{\frac{1}{2p}}Q(x)h_{\psi}^{\frac{1}{2p}},\quad x \in \cal{M}_h. 
\end{equation}
For any $x \in \cal{M}_h$, note that
\begin{equation*}
\label{}
Q_p^2\big(h_{\psi}^{\frac{1}{2p}} xh_{\psi}^{\frac{1}{2p}}\big)
\ov{\eqref{Def-esperance-h}}{=} Q_p\big(h_{\psi}^{\frac{1}{2p}} Q(x)h_{\psi}^{\frac{1}{2p}}\big)
\ov{\eqref{Def-esperance-h}}{=}h_{\psi}^{\frac{1}{2p}} Q^2(x)h_{\psi}^{\frac{1}{2p}}
=h_{\psi}^{\frac{1}{2p}} Q(x)h_{\psi}^{\frac{1}{2p}}
\ov{\eqref{Def-esperance-h}}{=} Q_p\big(h_{\psi}^{\frac{1}{2p}} xh_{\psi}^{\frac{1}{2p}}\big).
\end{equation*}
We deduce that $Q_p^2=Q_p$, i.e. that the map $Q_p$ is a projection. Since $s(h)$ \textit{belongs} to the centralizer of $\varphi$, we have a order isometric identification of $\L^p\big(\cal{M}_h,\varphi_{s(h)}\big)$ in the space $\L^p(\cal{M},\varphi)$.

For any $x \in \cal{M}_h$, we have
\begin{align*}
\MoveEqLeft
P\kappa\big(h_{\psi}^{\frac{1}{2p}}xh_{\psi}^{\frac{1}{2p}}\big) 
\ov{\eqref{Magic-formula-2}}{=} P\kappa\big(\kappa^{-1}(h)^{\frac{1}{2}}x \kappa^{-1}(h)^{\frac{1}{2}}\big)
= P\big(h^{\frac{1}{2}}x h^{\frac{1}{2}}\big)
\ov{\eqref{relevement-E_h-intro}}{=} h^{\frac{1}{2}}Q(x) h^{\frac{1}{2}} \\
&\ov{\eqref{Magic-formula-2}}{=} \kappa(h_{\psi})^{\frac{1}{2p}}Q(x) \kappa(h_{\psi})^{\frac{1}{2p}} 
=\kappa\big(h_{\psi}^{\frac{1}{2p}}Q(x) h_{\psi}^{\frac{1}{2p}}\big) \ov{\eqref{Def-esperance-h}}{=} \kappa Q_p\big(h_{\psi}^{\frac{1}{2p}}x h_{\psi}^{\frac{1}{2p}}\big).
\end{align*}  
Hence, by density we conclude that we have the following commutative diagram.
$$
\xymatrix @R=1cm @C=2cm{
\L^p(\cal{M},\varphi)\ar@{^{(}->}[r]^{P} &\L^p(\cal{M},\varphi)\\
\L^p\big(\cal{M}_h,\varphi_{s(h)}\big)\ar@{^{(}->}[u]  &  \L^p\big(\cal{M}_h,\varphi_{s(h)}\big)\ar@{^{(}->}[u]		\\
\L^p(\cal{M}_h,\psi)  \ar[u]^{\kappa}   \ar[r]^{Q_p} & \L^p(\cal{M}_h,\psi) \ar[u]_{\kappa} \\			
%     \L^p(M_h,\psi)  \ar[r]^{\E_{h,p}}\ar[u]^{\hat{\kappa}}&  \L^p(M_h,\psi) \ar[u]_{\hat{\kappa}}					
}
$$
In particular, we have the inclusion $P(\L^p(\cal{M}_h,\varphi_{s(h)})) \subset \L^p(\cal{M}_h,\varphi_{s(h)})$ and in addition the restriction $P|_{\L^p(\cal{M}_h)} \co \L^p(\cal{M}_h) \to \L^p(\cal{M}_h)$ is a positive contractive projection where we use the notation $\L^p(\cal{M}_h)=\L^p(\cal{M}_h,\varphi_{s(h)})$. Now, we consider the positive contractive map $R \co \L^p(\cal{M},\varphi) \to s(h)\L^p(\cal{M})s(h)$, $z \mapsto s(h)zs(h)$ and the canonical isometry $j \co \L^p(\cal{M}_h) \to \L^p(\cal{M})$. Note that $R \circ j=\Id_{\L^p(\cal{M}_h)}$. Then the point 1 of Theorem \ref{main-th-ds-intro-bis} says that 
\begin{equation}
\label{Facto-de-P}
P
=j \circ P|_{\L^p(\cal{M}_h)} \circ R.
\end{equation}
Now, we deduce that
\begin{align*}
\MoveEqLeft
P^2            
\ov{\eqref{Facto-de-P}}{=} j P|_{\L^p(\cal{M}_h)} R j  P|_{\L^p(\cal{M}_h)}  R
=j \big(P|_{\L^p(\cal{M}_h)}\big)^2 R
=j P|_{\L^p(\cal{M}_h)}R
\ov{\eqref{Facto-de-P}}{=} P.
\end{align*} 
We conclude that $P$ is a projection. The formula \eqref{Facto-de-P} shows that $P$ is positive and contractive. 

The first part of the last sentence of Theorem \ref{main-th-ds-intro-bis} is a consequence of \eqref{relevement-E_h} with $x=1$ and the second part can be deduced from Proposition \ref{Prop-fundamental} which also says that $Q$ is a Jordan conditional expectation. The proof is complete.

\begin{remark} \normalfont
\label{Rem1}
If the contractive projection $P \co \L^p(\cal{M}) \to \L^p(\cal{M})$ is completely copositive, then by Theorem \ref{Th-relevement-cp}, the previous faithful positive normal unital projection $Q_h \co \cal{M}_h \to \cal{M}_h$ is also completely copositive, hence decomposable within the meaning\footnote{\thefootnote. A map is decomposable if it is the sum of a completely positive map and a completely copositive map. This notion is different of the one of \cite {ArK23} and \cite{Arh22}.} of \cite[Definition 1.2.8]{Sto13}. By \cite[Theorem 2.2.4]{Sto13}, we conclude that the $\JW$-algebra $Q((\cal{M}_h)_\sa)$ is necessarily reversible. %A spin factor is not necearilly non-reversible. %are those which have direct summands of the type $\I_2$. But we recall that there exists reversible $\JW$-factors of type $\I_2$, i.e. spin factors 

 %By \cite{Sta1}, $\JW$-algebras of type $\I_2$ are exactly the direct sums of $\JW$-algebras of the form $\L^\infty(\Omega,\mu,V)$, where $\mu$ is a Radon measure on a locally compact space $\Omega$, $V$ is a spin factor.
\end{remark}

\begin{remark} \normalfont
\label{Rem2}
The paper \cite{CNR04} (see also \cite{BuP02} for related facts) furnish additional information on the range $Q(s(h)\cal{M}s(h))$ if we have additional knowledge on the von Neumann algebra $\cal{M}$. More precisely, we have the following properties.
\begin{enumerate}
\item[i.] If $\cal{M}$ is of type $\I$ then by combining \cite[Proposition 6.7.2]{Li92} and \cite[Proposition 2.8]{CNR04}, we see that the $\JW$-algebra $Q((s(h)\cal{M}s(h))_\sa)$ is also of type $\I$.

\item[ii.] If $\cal{M}$ is semifinite then with \cite[Proposition 6.5.9]{Li92} and \cite[Proposition 2.7]{CNR04} we infer that the $\JW$-algebra $Q((s(h)\cal{M}s(h))_\sa)$ is also semifinite.

\item[iii.] If $\cal{M}$ is finite then using \cite[Proposition 6.3.1]{Li92} and \cite[Proposition 2.7]{CNR04} we deduce that the $\JW$-algebra $Q((s(h)\cal{M}s(h))_\sa)$ is finite, i.e. modular.
\end{enumerate}	
\end{remark}

\begin{remark} \normalfont
\label{Rem3} The case where the von Neumann algebra $\cal{M}$ is finite and equipped with a normalized normal finite faifthful trace and where the projection $P \co \L^p(\cal{M}) \to \L^p(\cal{M})$ satisfies $P(1)=1$ is much simpler and instructive. Indeed, the previous proof shows that the map $P$ is equal to the $\L^p$-extension $Q_p \co \L^p(\cal{M}) \to \L^p(\cal{M})$ of a trace preserving normal faithful Jordan conditional expectation $Q \co \cal{M}\to \cal{M}$ which is reminiscent of the classical result \cite[Corollary 5.53 p.~222]{AbA02} which says that a positive contractive projection $P \co \L^p(\Omega) \to \L^p(\Omega)$ on a classical $\L^p$-space associated to a probability space $\Omega$ which makes constant invariant is induced by a conditional expectation.
\end{remark}

\begin{remark} \normalfont
Our assumption of $\sigma$-finiteness can be removed using weights and a gluing argument as in \cite{ArR19}.
\end{remark}

%\begin{remark} \normalfont
%Note, that is easy to give examples of non positive contractive projections on the Schatten space $S^p$ which are non positive. It is clear that the map $P \ov{\mathrm{def}}{=}\frac{1}{2}(\Id_{S^p} -\sigma) \co S^p \to S^p$ is a contractive projection on the subspace of antisymmetric matrices of $S^p$ which is not positive.
%\end{remark}

\begin{remark} \normalfont
If $\cal{M}$ is commutative, we recover essentially the structure of contractive projections of \cite[Theorem 1]{Lac74} in the positive case.
\end{remark}

Note the following module map property twisted by the modular group $(\sigma_{t}^\varphi)_{t \in \R}$ which shows a new phenomenon. Recall that an element $x$ of $\cal{M}$ is an entire analytic vector with respect to the normal faithful state $\varphi$ if the function $t \mapsto \sigma_t^\varphi(x)$ extends to a entire function from $\mathbb{C}$ into $\cal{M}$. By \cite[p.~32]{Str81}, the family $\cal{M}_a$ of entire analytic vectors is a weak* dense $*$-subalgebra of the von Neumann algebra $\cal{M}$. %It is proved in \cite[Lemma 1.1]{JuX03} that $h_\varphi^{\frac{1}{2p}} \cal{M}_a h_\varphi^\frac{1}{2p}$ is dense in the Banach space $\L^p(\cal{M})$ for any $0 \leq \theta \leq 1$.
A folklore result used in the proofs of \cite[Lemma 1.1]{JuX03} and \cite[Proposition 5.5]{HJX10} relying on \cite[Proposition 1.4]{GoL99} says that if $x \in \cal{M}_a$ we have 
\begin{equation}
\label{first-commutation}
xh_\varphi^\alpha
=h_\varphi^\alpha\sigma_{\i\alpha}^\varphi(x)
\end{equation}
for any $\alpha \in \mathbb{C}$ with $\Re \alpha >0$.

%\begin{prop}
%\label{Prop-modular-tracial}
%Let $\cal{M}$ be a von Neumann algebra equipped with a normal finite faithful trace $\varphi$. Suppose $1 \leq p \leq \infty$. Let $Q \co \cal{M} \to \cal{M}$ be a normal Jordan conditional expectation onto a $\JW^*$-subalgebra $\cal{N}$ of the $\JW^*$-algebra $(\cal{M},\circ)$ such that $\tau=\tau \circ Q$. Then  
%\begin{equation}
%\label{}
%Q_p(axa)
%=aQ_p(x)a, \quad a \in Q(\cal{M}), x \in \L^p(\cal{M}).
%\end{equation}
%\end{prop}

\begin{prop}
\label{Prop-modular}
Let $\cal{M}$ be a von Neumann algebra equipped with a normal faithful state $\varphi$. Suppose $1 \leq p \leq \infty$. Let $Q \co \cal{M} \to \cal{M}$ be a normal Jordan conditional expectation onto a $\JW^*$-subalgebra $Q(\cal{M})$ of the $\JW^*$-algebra $(\cal{M},\circ)$ such that $\varphi=\varphi \circ Q$. Then for any entire analytic vector $a$ belonging to $Q(\cal{M})$ such that $\sigma_{\frac{\i}{2p}}^\varphi(a)$ also belongs to $Q(\cal{M})$ (any element of $Q(\cal{M})$ if $p=\infty$) we have
\begin{equation}
\label{}
Q_p\big(ax\sigma_{\frac{\i}{p}}^\varphi(a)\big)
=aQ_p(x)\sigma_{\frac{\i}{p}}^\varphi(a), \quad x \in \L^p(\cal{M}).
\end{equation}
\end{prop}

\begin{proof}
%Note that $\cal{N}$ \textbf{is ??invariant??} by the modular group (by Haagerup/Hanche \cite{HaH84} invariant by the cosine semigroup ?hence ? by the modular semigroup. 
The case $p=\infty$ is a consequence of the formula $aba=2(a \circ b) \circ a-a^2 \circ b$. Suppose $1 \leq p <\infty$. For any $y \in \cal{M}$, using the case $p=\infty$ in the fourth equality, we have
 \begin{align*}
\MoveEqLeft
Q_p\big(ah_\varphi^{\frac{1}{2p}} y h_\varphi^{\frac{1}{2p}}\sigma_{\frac{\i}{p}}^\varphi(a)\big)          
\ov{\eqref{first-commutation}}{=} Q_p\big(h_\varphi^{\frac{1}{2p}} \sigma_{\frac{\i}{2p}}^\varphi(a)y\sigma_{-\frac{\i}{2p}}^\varphi(\sigma_{\frac{\i}{p}}^\varphi(a)) h_\varphi^{\frac{1}{2p}}\big) \\
&=Q_p\big(h_\varphi^{\frac{1}{2p}} \sigma_{\frac{\i}{2p}}^\varphi(a)y\sigma_{\frac{\i}{2p}}^\varphi(a) h_\varphi^{\frac{1}{2p}}\big) 
\ov{\eqref{Map-extension-Lp}}{=} h_\varphi^{\frac{1}{2p}} Q\big(\sigma_{\frac{\i}{2p}}^\varphi(a)y\sigma_{\frac{\i}{2p}}^\varphi(a)\big) h_\varphi^{\frac{1}{2p}} \\
&=h_\varphi^{\frac{1}{2p}} \sigma_{\frac{\i}{2p}}^\varphi(a)Q(y)\sigma_{\frac{\i}{2p}}^\varphi(a) h_\varphi^{\frac{1}{2p}}
\ov{\eqref{first-commutation}}{=} ah_\varphi^{\frac{1}{2p}} Q(y) h_\varphi^{\frac{1}{2p}}\sigma_{\frac{\i}{p}}^\varphi(a)
\ov{\eqref{Map-extension-Lp}}{=} aQ_p(h_\varphi^{\frac{1}{2p}}yh_\varphi^{\frac{1}{2p}})\sigma_{\frac{\i}{p}}^\varphi(a).
\end{align*} 
We conclude by density.
\end{proof}

\begin{remark} \normalfont
The assumption that $\sigma_{\frac{\i}{2p}}^\varphi(a)$ also belongs to $Q(\cal{M})$ is maybe useless. Note that the space of entire analytic vectors of $Q(\cal{M})$ is weak* dense in $Q(\cal{M})$. We sketch the argument. By \cite[Theorem 4.2]{HaS95}, the $\JW$-algebra $A$ associated to $Q(\cal{M})$ is invariant under the one-parameter cosine family $(\rho_t^\varphi)_{t \in \R}$ associated with $\varphi$. Since $\cal{M}$ is the von Neumann algebra, recall that by \cite[Proposition 3.6]{HaH84} the cosine family $(\rho_t^\varphi)_{t \in \R}$ is given by $\rho_t^\varphi(x)=\frac{1}{2}(\sigma^\varphi_t(x)+\sigma^\varphi_{-t}(x))$ where $x \in A$. Now, for any $\gamma >0$ the element
\begin{align*}
\MoveEqLeft
x_\gamma
\ov{\mathrm{def}}{=}\frac{1}{(2\pi\gamma^2)^{1/2}} \int_\R \e^{-\frac{t^2}{2\gamma^2}} \sigma_t^\varphi(x)\d t
%=\frac{1}{(2\pi\gamma^2)^{1/2}} \int_0^{+\infty} \e^{-\frac{t^2}{2\gamma^2}} \sigma_t^\varphi(x)\d t+\frac{1}{(2\pi\gamma^2)^{1/2}}\int_{0}^{\infty} \e^{-\frac{t^2}{2\gamma^2}} \sigma_t^\varphi(x)\d t \\        
=\frac{1}{(2\pi\gamma^2)^{1/2}} \int_0^{\infty} \e^{-\frac{t^2}{2\gamma^2}} (\sigma^\varphi_t(x)+\sigma^\varphi_{-t}(x))\d t
\end{align*}
belongs to $A$ and approximate $x \in A$ in the weak* topology when $\gamma \to 0$ by \cite[p.~72]{Sun87} \cite[p.~33]{Str81}. %Note that the net $(x_\gamma)$ is bounded.
%After we use 
%$$
%\norm{a_ixa_i-x}_p 
%\leq \norm{a_ixa_i-a_ix}_p+\norm{a_ix-x}_p
%\leq \norm{a_i}_\infty \norm{xa_i-x}+\norm{a_ix-x}_p \to 0.
%$$
\end{remark}

We have the following (easy) particular case.

\begin{cor}
\label{car-modular-tracial}
Let $\cal{M}$ be a von Neumann algebra equipped with a normal finite faithful trace $\tau$. Suppose $1 \leq p \leq \infty$. Let $Q \co \cal{M} \to \cal{M}$ be a normal Jordan conditional expectation onto a $\JW^*$-subalgebra $\cal{N}$ of the $\JW^*$-algebra $(\cal{M},\circ)$ such that $\tau=\tau \circ Q$. Then  
\begin{equation}
\label{}
Q_p(axa)
=aQ_p(x)a, \quad a \in Q(\cal{M}), x \in \L^p(\cal{M}).
\end{equation}
\end{cor}

%%%%%%%%%%%%%%%%%%%%%%%%%%%%%%%%%%%%%%%%%%%%%%%%%%%%%%%%%%%%%%%%%%%%%%%%%%%%%%%%%%%
\section{An analysis of the lifting contractive projection}
\label{Sec-a-precise}

Let $\cal{M}$ be a von Neumann algebra and $\varphi$ be a normal faithful positive linear form on $\cal{M}$. Let $Q \co \cal{M} \to \cal{M}$ be a $\varphi$-preserving normal unital positive projection on a $\JW^*$-subalgebra $\cal{N}$. We let $A \ov{\mathrm{def}}{=} \cal{N}_\sa$ which is a $\JW$-subalgebra of $(\cal{M}_\sa,\circ)$. In this section, we will make an analysis of the structure of the $\L^p$-extension of $Q$, using results of Haagerup and St\o rmer. We also obtain information on the range.

We begin to use structure results on the $\JW$-algebra $A$. By \cite[Theorem 6.4]{Sto66}, \cite[Lemma 2.3]{Sto68}, there exist projections $e_1,e_2,e_3$ in the center $\Zc(A)$ of the $\JW$-algebra $A$ with sum $1$ such that
\begin{enumerate}
	\item $A_1=e_1A$ is the selfadjoint part of the von Neumann algebra $A_1''$,
	\item $A_2=e_2A$ is purely real\footnote{\thefootnote. In particular $A_2$ is the selfadjoint part of the real von Neumann algebra $\kR(A_2)$.}, %reversible and $\kR(fA) \cap \i \kR(fA)=\{0\}$,
	\item $A_3=e_3A$ is totally non reversible, hence of type 2 by Theorem \ref{thm-factors}.
\end{enumerate}
We recall that $\varphi_{e_1},\varphi_{e_2}$ and $\varphi_{e_3}$ denote the reduced weights of $\varphi$ on the von Neumann algebras $e_1 \cal{M} e_1,e_2 \cal{M} e_2$ and $e_3 \cal{M} e_3$. Note that theses weights are faithful normal positive functionals.

\begin{lemma}
\label{First-lemma} 
Let $(e_i)_{i \in I}$ be a family of projections of the the center $\Zc(A)$ of the $\JW$-algebra $A$ with sum $1$.
\begin{enumerate}
\item For any $x \in \cal{M}$ and any $i \in I$, we have $Q(e_i x e_i) \in e_i \cal{N}$.
\item For any $x \in \cal{M}$, we have 
\begin{equation}
\label{Decompose-Q}
Q(x)
=\sum_{i \in I}Q(e_i x e_i).
\end{equation}	

  \item If $i \in I$ and if $y$ belongs to $e_i \cal{N}$, we have $Q(y)=y$. 
	
	\item The projections $e_i$ belongs to the centralizer $\cal{M}^\varphi$ of $\varphi$.
%\item If $1 \leq i \leq 3$, there exists $0 \leq \lambda_i \leq 1$ such that $Q(e_i)=\lambda_i 1$. Moreover, if $\lambda_i \not=0$, the map $Q_i \co e_i M e_i \to e_i M e_i$, $e_i x e_i \mapsto \frac{1}{\lambda_i}Q(e_ixe_i)e_i$ is a faithful normal unital positive projection on the $\JW^*$-algebra $e_iN$ such that $\varphi_{e_i} \circ Q_i = \frac{1}{\lambda_i} \varphi_{e_i}$.
\end{enumerate}
\end{lemma}

\begin{proof}
1. Since $e_i \in A$, we have $Q(e_i)=e_i$. By \cite[Theorem 2.2.2 (i)]{Sto13}, the selfadjoint element $e_i$ belongs to the definite set \eqref{Def-definite-set} of $Q$. For any $x \in \cal{M}$, we deduce by the well-known Broise's observation \cite[Lemma 4.1]{Sto80a} \cite[Proposition 2.1.7 (ii)]{Sto13} that 
\begin{equation}
\label{Divers-150}
Q(e_ixe_i)
=Q(e_i)Q(x)Q(e_i)
=e_iQ(x)e_i
=e_iQ(x)
\end{equation}
where we use in the last equality the fact that the element $e_i$ of $\Zc(A)$ commute with the elements of $\Ran Q=\cal{N}=A+\i A$. In particular, $Q(e_i x e_i)$ belongs to $e_i\cal{N}$. Similarly, we have $Q(e_i x e_i)=Q(x)e_i$. 

2. Now, for any $x \in \cal{M}$, we have
\begin{equation}
\label{Divers-151}
\sum_{i \in I} Q(e_i x e_i)
\ov{\eqref{Divers-150}}{=} \sum_{i \in I} e_iQ(x)
= \bigg( \sum_{i \in I} e_i\bigg)Q(x)
=Q(x).
\end{equation}
%Now, \eqref{Decompose-Q} is a direct consequence of \eqref{Divers-151} and \eqref{Divers-150}.

3. For any $x \in \cal{M}$, replacing $x$ by $e_i x$ in \eqref{Divers-151}, we deduce that 
\begin{equation}
\label{Divers-152}
Q(e_i x) 
\ov{\eqref{Divers-151}}{=} \sum_{j \in I} Q(e_j e_i x e_j)
=Q(e_ixe_i)
\end{equation}
and similarly $Q(x e_i)=Q(e_i x e_i)$. If $y \in e_i \cal{N}$, there exists $x \in \cal{N}$ such that $y=e_i x$. So we have 
$$
Q(y)
=Q(e_i x)
\ov{\eqref{Divers-152}}{=} Q(e_ixe_i)
\ov{\eqref{Divers-150}}{=} e_iQ(x)
=e_i x
=y.
$$

4. For any $x \in \cal{M}$, using the preservation of $\varphi$ in the first and the last equalities, we see that
\begin{equation*}
\label{}
\varphi(e_ix)
=\varphi(Q(e_i x))
\ov{\eqref{Divers-152}}{=} \varphi(Q(e_i x e_i))
=\varphi(Q(x e_i))
=\varphi(x e_i).
\end{equation*}
By \eqref{Charac-centralizer}, we conclude that $e_i$ belongs to the centralizer $\cal{M}^\varphi$.
%3. Note that each projection $e_i$ belongs to $M \cap A'$. So by \cite[Lemma 4.2]{Sto8} (note that the assumption ``subfactor'' is unnecessary), we deduce the existence of $\lambda_i$ and that $Q_i \co e_i M e_i \to e_i M e_i$ is a normal unital positive projection on the $\JW^*$-algebra $e_iN$. Moreover, we have $Q_i(e_i)=\frac{1}{\lambda_i}Q(e_ie_ie_i)e_i=\frac{1}{\lambda_i}Q(e_i)e_i=\frac{1}{\lambda_i}\lambda 1 e_i=e_i$. So $Q_i$ is unital. Let $e_i x e_i$ be a positive element of $e_i M e_i$ satisfying $Q_i(e_i x e_i)=0$, that is $ \frac{1}{\lambda_i}Q(e_ixe_i)e_i=0$. We infer that $Q(e_ixe_i)e_i=0$, hence $Q(e_ixe_i)$ and finally $e_ixe_i$ by the faithfulness of $Q$. We conclude that $Q_i$ is faithful. Furthermore, we have $\lambda_i=\norm{\lambda_i 1}=\norm{Q(e_i)} \leq \norm{e_i} \leq 1$. Now, we have
%\begin{align*}
%\MoveEqLeft
%\varphi_{e_i}(Q_i(e_i x e_i))
%=\varphi_{e_i}\bigg(\frac{1}{\lambda_i}Q(e_ixe_i)e_i\bigg)
%=\frac{1}{\lambda_i}\varphi_{e_i}\big(Q(e_ixe_i)e_i\big)
%?=?\frac{1}{\lambda_i}\varphi_{e_i}\big(Q(e_ixe_i)\big) \\
%&=\frac{1}{\lambda_i}\varphi\big(Q(e_ixe_i)\big)
%=\frac{1}{\lambda_i}\varphi(e_ixe_i) 
%=\frac{1}{\lambda_i}\varphi_{e_i}(e_i x e_i).
%\end{align*} 
%
%3. It is left to the reader.
\end{proof}

%\begin{remark} \normalfont
%\label{Extension-lemma}
%This result admits an extension for family $(e_i)_{i \in I}$ of central projections of $A$ such that $\sum_{i \in I} e_i=1$. The properties 1, 2 and 3 before Lemma \ref{First-lemma} are not used in the proof. 
%\end{remark}

By the part 1 of this result, we deduce that we have a canonical isometric identification of each noncommutative $\L^p$-space $\L^p(e_i \cal{M}e_i)$ with the subspace $e_i\L^p(\mathcal{M})e_i$ of $\L^p(\cal{M})$. Note that each restriction $Q|e_i \cal{M} e_i \co e_i \cal{M} e_i \to e_i \mathcal{M} e_i$ is a faithful normal unital positive projection on $Ne_i$ which preserves the normal positive faithful linear form $\varphi|e_i \cal{M} e_i$. So to understand the projection $Q$, it suffices by Lemma \ref{First-lemma} to examine three cases separately.  

\vspace{0.2cm}

%unital positive projection $Q \co \to M$on the $\JW^*$-algebra $e_iA$$Q \co M \to M$ in each case. 
%Warning, the restriction of the state is not necessatily a state on each summand; must be rescaled \cite[Lemma 4]{CCGS1}

We need the following result for the two last cases. Here, we denote by $\L^{p,D}(\cal{M},\tau_0)$ the Dixmier noncommutative $\L^p$-space associated with a von Neumann algebra $M$ equipped with a normal semifinite faithful trace $\tau_0$. If $\tau_0$ is finite, recall that $\mathfrak{m}_{\tau_0}=\cal{M}$.

\begin{lemma}
\label{Lemma-identification}
Let $\cal{M}$ be a von Neumann algebra equipped with a normal semifinite faithful trace $\tau_0$. Let $T \co \cal{M} \to \cal{M}$ be a positive normal contraction which preserves $\tau_0$ and a normal linear form $\varphi=\tau_0(d\, \cdot)$ on $\cal{M}$ where $d \in \L^{1,D}(\cal{M},\tau)$. We denote by $h_{\varphi}$ the density operator of the weight $\varphi$. Suppose $1 \leq p < \infty$. Then the map $\mathfrak{m}_{\tau_0} \mapsto \L^p(\cal{M},\varphi)$, $x \mapsto h_{\varphi}^{\frac{1}{2p}} x h_{\varphi}^{\frac{1}{2p}}$ extends to a positive isometric map $\Phi \co \L^{p,D}(\cal{M},\tau_0) \to \L^p(\cal{M},\varphi)$ and if $T_{p,H} \co \L^p(\mathcal{M},\varphi) \to \L^p(\mathcal{M},\varphi)$ and $T_{p,D} \co \L^{p,D}(\cal{M},\tau_0) \to \L^{p,D}(\cal{M},\tau_0)$ denote the $\L^p$-extension of $T$ we have the following commutative diagram.
$$
\xymatrix @R=1cm @C=2cm{
\L^p(\cal{M},\varphi) \ar[r]^{T_{p,H}} &  \L^p(\cal{M},\varphi) \\
\L^{p,D}(\cal{M},\tau_0) \ar[u]^{\Phi}   \ar[r]_{T_{p,D}}   &  \L^{p,D}(\cal{M},\tau_0)\ar[u]_{\Phi} \\						
}
$$
\end{lemma}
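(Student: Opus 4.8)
The plan is to construct the map $\Phi$ explicitly on the canonical dense subalgebra, identify it with a known isometry, and then check the commutation of the diagram first on a dense subspace and finally extend by continuity. First I would recall that since $\varphi=\tau_0(d\,\cdot)$ is a normal semifinite faithful weight on $M$ (here normal faithful positive linear form, so $d\in\L^1_D(M,\tau_0)_+$ with $d$ bounded inverse on its support, and in our applications $d$ is actually in the centralizer), Haagerup's $\L^p$-space $\L^p(M,\varphi)$ can be described concretely via the density operator $h_\varphi$, and there is a standard isometric order isomorphism between the Dixmier (tracial) space $\L^p_D(M,\tau_0)$ and $\L^p(M,\varphi)$ implementing the change from the trace $\tau_0$ to the weight $\varphi$. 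Concretely, the map $\Phi$ sends $d^{\frac{1}{2p}}\,y\,d^{\frac{1}{2p}}\in\L^p_D(M,\tau_0)$ (equivalently, on the dense subspace $\mathfrak m_{\tau_0}$, the element $x$) to $h_\varphi^{\frac{1}{2p}}\,x\,h_\varphi^{\frac{1}{2p}}$, and one checks it is a positive isometry onto $\L^p(M,\varphi)$; this is exactly the content of the change-of-density formula for Haagerup $\L^p$-spaces (see the discussion in the paragraph ``Change of weight'' and \cite[Remark 5.6]{HJX}, or the Connes--Hilsum type identification). Positivity and isometry are routine consequences of the fact that $\Phi$ is built from the Mazur-type map $x\mapsto |x|^{1/p}$ and the functional calculus, together with $\tr_\varphi(h_\varphi^p\,\cdot)$ restricting appropriately; I would not grind through this.

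Next I would verify the commutativity on the dense subspace $\mathfrak m_{\tau_0}$ (equivalently on elements of the form $d^{\frac{1}{2p}}y d^{\frac{1}{2p}}$). By definition of the $\L^p$-extension $T_{p,D}$ via \eqref{Map-extension-Lp} applied to the trace $\tau_0$, we have $T_{p,D}(d^{\frac{1}{2p}}y d^{\frac{1}{2p}})=d^{\frac{1}{2p}}T(y)d^{\frac{1}{2p}}$ — here one uses that $T$ preserves both $\tau_0$ and $\varphi$, so the abstract construction of \eqref{Map-extension-Lp} is available for both realizations. Applying $\Phi$ gives $h_\varphi^{\frac{1}{2p}}T(y)h_\varphi^{\frac{1}{2p}}$. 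On the other hand $\Phi(d^{\frac{1}{2p}}y d^{\frac{1}{2p}})=h_\varphi^{\frac{1}{2p}}y h_\varphi^{\frac{1}{2p}}$ and, by the definition of $T_{p,H}$ via \eqref{Map-extension-Lp} applied to the weight $\varphi$ (legitimate since $\varphi\circ T=\varphi$), $T_{p,H}(h_\varphi^{\frac{1}{2p}}y h_\varphi^{\frac{1}{2p}})=h_\varphi^{\frac{1}{2p}}T(y)h_\varphi^{\frac{1}{2p}}$. So the two composites agree on a dense subspace, and since all four maps ($\Phi$ on both sides, $T_{p,D}$, $T_{p,H}$) are bounded, the diagram commutes everywhere by density of $\mathfrak m_{\tau_0}$ (resp. $h_\varphi^{\frac{1}{2p}}\mathfrak m_{\tau_0}h_\varphi^{\frac{1}{2p}}$) in the respective $\L^p$-spaces.

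The main obstacle I expect is the careful bookkeeping of densities and the verification that the concrete $\Phi$ defined on $\mathfrak m_{\tau_0}$ really does extend to an isometry of $\L^p_D(M,\tau_0)$ onto all of $\L^p(M,\varphi)$, respecting positivity — i.e. identifying the Dixmier space $\L^p_D(M,\tau_0)$ as the ``untwisted'' model of the Haagerup space built from $\varphi$. This is classical (it amounts to the statement that for a semifinite $M$ the Haagerup $\L^p$-space attached to any normal faithful weight is isometrically the same as the tracial $\L^p$-space, with the explicit intertwiner given by multiplication by $h_\varphi^{1/p}$ symmetrically), but writing it down cleanly requires invoking the right references from \cite{Terp}, \cite{HJX} and handling the case where $\varphi$ is merely a positive linear form rather than a trace. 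Once $\Phi$ is in hand, the rest of the argument is the short density computation above. Finally, I would remark that in the two cases where this lemma is applied (the purely real summand $A_2$ and the type-$2$ summand $A_3$), the relevant $M$ — namely $\kR(A_2)$ or the algebra generated by $A_3$ — carries a natural normal semifinite faithful trace by Remark \ref{Rem2}(ii)--(iii), so the hypotheses are met and $d$ (the density of the restricted $\varphi$) lies in $\L^1_D(M,\tau_0)$ as required.
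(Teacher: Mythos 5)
The diagram-chasing half of your argument is the same as the paper's: on a dense subspace both composites send $x$ to $h_{\varphi}^{\frac{1}{2p}}T(x)h_{\varphi}^{\frac{1}{2p}}$, using \eqref{Map-extension-Lp} for the $\varphi$-extension, and one concludes by boundedness and density. One bookkeeping slip, though: your identity $T_{p,D}\big(d^{\frac{1}{2p}}yd^{\frac{1}{2p}}\big)=d^{\frac{1}{2p}}T(y)d^{\frac{1}{2p}}$ is not the definition of the Dixmier extension and is false in general (the tracial extension does not commute with symmetric multiplication by $d$); what is needed, and what the paper uses, is simply that $T_{p,D}(x)=T(x)$ for $x\in\mathfrak{m}_{\tau_0}\subset\L^p_D(M,\tau_0)$, which holds because $T$ is $\tau_0$-preserving.

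The genuine gap is that you declare the key assertion — that $x\mapsto h_{\varphi}^{\frac{1}{2p}}xh_{\varphi}^{\frac{1}{2p}}$ extends to a positive \emph{isometry} $\Phi\co\L^p_D(M,\tau_0)\to\L^p(M,\varphi)$ — to be ``classical'' and explicitly decline to prove it, whereas this is the entire content of the lemma and is exactly what the paper's proof is devoted to. The references you invoke do not contain it: the ``Change of weight'' paragraph compares two Haagerup realizations $\L^p(M,\varphi_1)$ and $\L^p(M,\varphi_2)$ (not the Dixmier space with a Haagerup space), and \cite[Remark 5.6]{HJX} is about extending maps, not about this identification. Moreover, the classical tracial/Haagerup identification is naturally expressed through the density of the dual weight of the \emph{trace} $\tau_0$, not through the density $h_\varphi$ of $\varphi$, so connecting it to the specific map of the lemma requires an actual computation; this is what the paper carries out, by writing $M\rtimes_{\sigma^{\tau_0}}\R=M\otvn\L^\infty(\R)$ via the Fourier transform, computing the dual weight \eqref{trace-identification}, the canonical trace \eqref{Identification-tilde-tau} and the density \eqref{inter-12467}, invoking Terp's isometry $\eta\co\L^p_D(M,\tau_0)\to\Theta(\L^p(M,\tau_0))$, $x\mapsto x\ot\exp^{\frac{1}{p}}$, and composing with the change-of-weight map $\kappa$ of \eqref{kappa} together with the identity \eqref{h-tau-0}. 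Precisely this last step is where the choice of density enters and where all the delicacy of the statement sits, so waving it off as a ``routine consequence of Mazur-type maps'' leaves the proof without its core. Finally, your closing remark is off target: in the applications (Propositions \ref{Prop-analysis-good-centers} and \ref{Prop-analysis-I2}) the traces come from the finiteness of $A''$ (uniqueness of the normalized trace on a finite factor) and from the canonical traces of spin factors, not from Remark \ref{Rem2}.
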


\begin{proof}
First, we need to recall some information on the crossed product  $\cal{M} \rtimes_{\sigma^{\tau_0}} \R$ which is written in few lines in \cite[pp.~62-63]{Ter81}. Note that $\cal{M} \rtimes_{\sigma^{\tau_0}} \R$ is equal to the tensor product $\cal{M} \otvn \cal{A}$ where $\cal{A}$ is the von Neumann algebra generated by the translations $\lambda_s \co \L^2(\R) \to \L^2(\R)$ for $s \in \R$. With the Fourier transform $\cal{F} \co \L^2(\R) \to \L^2(\R)$ we can write $\cal{F}^{-1}\cal{A}\cal{F}=\L^\infty(\R)$ and 
\begin{equation}
\label{mult-translation}
\cal{F}^{-1}\lambda_s\cal{F}
=\e^{\i  s \cdot},\quad s \in \R.
\end{equation}
So we have a $*$-isomorphism $\Theta \co \cal{M} \otvn \cal{A} \to \cal{M} \otvn \L^\infty(\R)$, $y \ot z \mapsto y \ot \cal{F}^{-1} z \cal{F}$. As indicated in \cite[p.~62]{Ter81}, it is easy to check that the transformation $\theta_s \ov{\mathrm{def}}{=} \Theta^{-1} \circ \hat{\sigma}_s^{\tau_0} \circ \Theta$ of the dual action $\hat{\sigma}_s^{\tau_0} \co \cal{M} \otvn \cal{A} \to \cal{M} \otvn \cal{A}$ defined in \eqref{Dual-action} under $\Theta$ is determined by
\begin{equation}
\label{action-thetat}
\theta_s(x \ot f)
=x \ot \lambda_s(f), \quad s \in \R, x \in \cal{M}, f \in \L^\infty(\R).
\end{equation}
%Indeed, we have
%\begin{align*}
%\MoveEqLeft
%\theta_s(x \ot f)            
%=\Theta^{-1} \circ \hat{\sigma}_s^{\tau_0} \circ \Theta(x \ot f)
%=\Theta^{-1} \circ \hat{\sigma}_s^{\tau_0}\big(x \ot \mathcal{F}^{-1}f\mathcal{F}\big)
%\ov{\eqref{Dual-action}}{=} \Theta \big(W(s)\big(x \ot \mathcal{F}f\mathcal{F}^{-1}\big)W(s)^*\big) \\
%&\ov{\eqref{Def-W}}{=} \Theta \big(x \ot M_{\e^{-\i s \cdot}}\mathcal{F}f\mathcal{F}^{-1}M_{\e^{\i s \cdot}}\big)
%=x \ot \mathcal{F}^{-1}M_{\e^{-\i s \cdot}}\mathcal{F}^{-1}f\mathcal{F}M_{\e^{\i s \cdot}}\mathcal{F} 
%\ov{\eqref{mult-translation}}{=} x \ot \lambda_s(f).
%\end{align*}
%By ???, the dual weight $\hat{\tau_0}$ of $\tau_0$ is defined by
%\begin{equation}
%\label{}
%\hat{\tau_0}(x)
%=\varphi\bigg(\int_{-\infty}^\infty \hat{\sigma}_t^{\tau_0}(x) \d t\bigg),\quad x \in M \otvn \mathcal{A}, x \geq 0. 
%\end{equation}
Recall that $\hat{\tau_0}$ denotes the dual weight on $\cal{M} \otvn \cal{A}$ defined in \eqref{Def-poids-dual}. Let $\tilde{\hat{\tau_0}} \ov{\mathrm{def}}{=} \hat{\tau_0} \circ \Theta^{-1}$ be the weight on $\cal{M} \otvn \L^\infty(\R)$ obtained by transport of structure. 
%Since $\Theta$ leaves invariant the elements $y \ot 1$, we have for any $x \in M \otvn \L^\infty(\R)$ with $x \geq 0$
%\begin{align*}
%\MoveEqLeft           
%\tilde{\hat{\tau_0}}(x \ot f)
%=\hat{\tau_0} \circ \Theta^{-1}(x \ot f)
%\ov{\eqref{Def-poids-dual} \eqref{Operator-valued}}{=} \tau_0\bigg(\int_{-\infty}^\infty \hat{\sigma}_s^{\tau_0}(\Theta^{-1}(x \ot f)) \d s\bigg) \\
%&=\tau_0\bigg(\Theta \int_{-\infty}^\infty \hat{\sigma}_s^{\tau_0}(\Theta^{-1}(x \ot f)) \d s\bigg) 
%=\tau_0\bigg( \int_{-\infty}^\infty\Theta \circ \hat{\sigma}_s^{\tau_0} \circ  \Theta^{-1}(x \ot f) \bigg) \\
%&=\tau_0\bigg(\int_{-\infty}^\infty \theta_s(x \ot f) \d s\bigg)
%\ov{\eqref{action-thetat}}{=} \tau_0 \bigg(\int_{-\infty}^\infty x \ot \lambda_s(f) \d s\bigg) 
%= \bigg(\tau_0 \ot \int_{-\infty}^\infty\bigg)(x  \ot f)
%\end{align*}  
%(note that $(\tau_0 \ot \Id)\circ \Theta=\tau_0 \ot \Id$. 
It is not difficult to prove that
\begin{equation}
\label{trace-identification}
\tilde{\hat{\tau_0}}
=\tau_0 \ot \int_{-\infty}^\infty.
\end{equation}
Let $\tau \ov{\mathrm{def}}{=} \tau_{\tau_0}$ be the canonical trace on the von Neumann algebra $\cal{M} \rtimes_{\sigma^{\tau_0}} \R=\cal{M} \otvn \cal{A}$. Set $\tilde{\tau} \ov{\mathrm{def}}{=} \tau \circ \Theta^{-1}$ be the normal semifinite faithful trace on $\cal{M} \otvn \L^\infty(\R)$ obtained by transport of structure. Then we have
\begin{equation}
\label{}
\big(\D \tilde{\hat{\tau_0}} : \D\tilde{\tau}\big)_t
=\Theta\big(\D\hat{\tau_0} : \D\tau)_t\big)
\ov{\eqref{trace-carac}}{=} \Theta(1 \ot \lambda_t)
=1 \ot \cal{F}^{-1}  \lambda_t \cal{F}
\ov{\eqref{mult-translation}}{=} 1 \ot \e^{\i t \cdot}
=(1 \ot \exp)^{\i t}, \quad t \in \R.
\end{equation}
Hence, by \eqref{Radon-Nikodym-1}, the Pedersen-Takesaki derivative $\displaystyle \frac{\d\tilde{\hat{\tau_0}}}{\d\tilde{\tau}}$ is equal to $1 \ot \exp$. 
%is equal to $1 \ot \tilde{D}$, where $\tilde{D}$ is defined by
%\begin{equation}
%\label{Def-tilde-D}
%(\tilde{D}\xi)(s)
%=\e^s \xi(s),\quad s \in \R, \xi \in \L^2(\R).
%\end{equation}
By \cite[p.~62]{Str81}, it follows that
\begin{equation}
\label{Identification-tilde-tau}
\tilde{\tau}
=\tilde{\hat{\tau_0}}\big(\cdot (1 \ot \exp)^{-1}\big)
\ov{\eqref{trace-identification}}{=} \tau_0 \ot \e^{-s} \d s. 
\end{equation}
Moreover, recall that $h_{\tau_0} = \displaystyle \frac{\d\hat{\tau_0}}{\d\tau}$. By transport of structure, we have 
\begin{equation}
\label{inter-12467}
1 \ot \exp
=\Theta(h_{\tau_0}).
\end{equation}
The isomorphism $\Theta \co \cal{M} \otvn \cal{A} \to \cal{M} \otvn \L^\infty(\R)$ extends to an isomorphism $\Theta \co \L^0(\cal{M} \otvn \cal{A},\tau) \to \L^0(\cal{M} \otvn \L^\infty(\R),\tilde{\tau})$ which allows to identify isometrically the Haagerup noncommutative $\L^p$-space $\L^{p,H}(\cal{M},\tau_0)$ with the range $\Theta(\L^p(\cal{M},\tau_0))$. 
By \cite[pp.~62-63]{Ter81}, we have an order and isometric isomorphism $\eta \co \L^{p,D}(\cal{M},\tau_0) \to \Theta(\L^p(\cal{M},\tau_0))$, $x \mapsto x \ot \exp^{\frac{1}{p}}$. From \eqref{kappa}, we also have a canonical map $\kappa$ which induces an order and isometric identification $\kappa \co \L^p(\cal{M},\tau_0) \to \L^p(\cal{M},\varphi)$ for all $p$ such that 
\begin{equation}
\label{h-tau-0}
\kappa(h_{\tau_0})
=h_\varphi.
\end{equation}
Now, for any $x \in \mathfrak{m}_{\tau_0}$ we can compute 
\begin{align*}
\MoveEqLeft
\kappa \circ \Theta^{-1} \circ \eta(x)          
=\kappa \circ \Theta^{-1}\big(x \ot \exp^{\frac{1}{p}}\big) 
=\kappa \circ \Theta^{-1}\big((1 \ot \exp^{\frac{1}{2p}})(x \ot 1)(1 \ot \exp^{\frac{1}{2p}})\big) \\
&=\kappa \big(\Theta^{-1}(1 \ot \exp)^{\frac{1}{2p}}\Theta^{-1}(x \ot 1)\Theta^{-1}(1 \ot \exp)^{\frac{1}{2p}}\big) \\
&\ov{\eqref{inter-12467}}{=}\kappa \big(h_{\tau_0}^{\frac{1}{2p}}x h_{\tau_0}^{\frac{1}{2p}}\big) 
= \kappa(h_{\tau_0})^{\frac{1}{2p}}x\kappa(h_{\tau_0})^{\frac{1}{2p}}
\ov{\eqref{h-tau-0}}{=}h_\varphi^{\frac{1}{2p}} x h_\varphi^{\frac{1}{2p}}
=\Phi(x).           
\end{align*} 
By composition, we conclude that $\Phi$ induces an order isometric isomorphism from $\L^{p,D}(\cal{M},\tau_0)$ into $\L^p(\cal{M},\varphi)$. If $x \in \mathfrak{m}_{\tau_0}$, we have 
\begin{align*}
\MoveEqLeft
T_{p,H} \circ \Phi(x)
=T_{p,H}\big(h_\varphi^{\frac{1}{2p}}xh_\varphi^{\frac{1}{2p}}\big)
\ov{\eqref{Map-extension-Lp}}{=} h_\varphi^{\frac{1}{2p}} T(x) h_\varphi^{\frac{1}{2p}}            
=\Phi(T(x))
=\Phi \circ T_{p,D}(x).
\end{align*} 
The proof is complete.
%We deduce that
%\begin{align*}
%\MoveEqLeft
%T_{p,H} \circ \kappa \circ \Theta^{-1} \circ \eta(x)          
%=T_{p,H} \circ \kappa \circ \Theta^{-1}\big(x \ot \tilde{D}^{\frac{1}{p}}\big) \\
%&=T_{p,H} \circ \kappa \circ \Theta^{-1}\big((1 \ot \tilde{D}^{\frac{1}{2p}})(x \ot 1)(1 \ot \tilde{D}^{\frac{1}{2p}})\big) \\
%&=T_{p,H} \circ \kappa \big(D^{\frac{1}{2p}}xD^{\frac{1}{2p}}\big) \\
%&=T_{p,H} \big(\kappa(D)^{\frac{1}{2p}}x\kappa(D)^{\frac{1}{2p}}\big)\\
%&=T_{p,H} \big(h_\varphi^{\frac{1}{2p}}xh_\varphi^{\frac{1}{2p}}\big)\\
%&=h_\varphi^{\frac{1}{2p}} T(x) h_\varphi^{\frac{1}{2p}}.
%\end{align*} 
%Moreover, we have
%\begin{align*}
%\MoveEqLeft
%\Phi \circ T_{p,D}(x)
%=\kappa \circ \Theta^{-1} \circ \eta \circ T_{p,D}(x)            
%=\kappa \circ \Theta^{-1} \circ \eta(T(x))  \\
%&=\kappa \circ \Theta^{-1} \big( T(x) \ot \tilde{D}^{\frac{1}{p}} \big) \\
%&=\kappa \circ \Theta^{-1} \big( \big((1 \ot \tilde{D}^{\frac{1}{2p}})(T(x) \ot 1)(1 \ot \tilde{D}^{\frac{1}{2p}})\big)  \big) \\
%&= \kappa \big(D^{\frac{1}{2p}}T(x)D^{\frac{1}{2p}}\big)\\
%&=\kappa(D)^{\frac{1}{2p}}T(x)\kappa(D)^{\frac{1}{2p}}\\
%&= h_\varphi^{\frac{1}{2p}} T(x) h_\varphi^{\frac{1}{2p}}.
%\end{align*}
%\begin{align*}
%\MoveEqLeft
%\tau(dx)            
%=\varphi(x)
%=\tr_\varphi(xh_\varphi )
%=\tr_{\tau}\big(\kappa^{-1}(h_\varphi)x\big) \\
%&=\bigg(\tau \ot \int_\R \bigg)\big(\Theta\kappa^{-1}(h_\varphi)x\big) \\
%&=\bigg(\tau \ot \int_\R \bigg)\big(\Theta\kappa^{-1}(h_\varphi)x\big)
%\end{align*} 
%
 %We deduce that
%$$
%\kappa(D)
%=h_\varphi.
%$$
\end{proof}

\paragraph{Case 1} %Suppose that $M$ is a von Neumann algebra equipped with a normal state $\varphi$. 
We consider a normal positive faithful linear form $\varphi$ on a von Neumann algebra $\cal{M}$. We suppose that $Q \co \cal{M} \to \cal{M}$ is a $\varphi$-preserving normal unital faithful positive projection on a $\JW^*$-subalgebra $\cal{N}$ such that the associated $\JW$-algebra $A = \cal{N}_\sa$ is the selfadjoint part of the von Neumann algebra $A''$. Note that $\cal{N}=A +\i A=(A'')_\sa+\i (A'')_\sa=A''$. Consequently, the range $\cal{N}$ of $Q$ is a von Neumann algebra. By Tomiyama's theorem \cite[p.~117]{Str81}, we conclude that $Q \co \cal{M} \to \cal{M}$ is a normal faithful conditional expectation on $N$. Since $\varphi$ is preserved, we can consider by \eqref{Map-extension-Lp} its $\L^p$-extension $Q_p \co \L^p(\cal{M}) \to \L^p(\cal{M})$. Its range is isometrically isomorphic to a noncommutative $\L^p$-space.

\vspace{0.2cm}

\paragraph{Case 2} 
We consider a normal positive faithful linear form $\varphi$ on a von Neumann algebra $\cal{M}$. We suppose that $Q \co \cal{M} \to \cal{M}$ is a $\varphi$-preserving normal unital faithful positive projection on a $\JW^*$-subalgebra $\cal{N}$ such that the associated $\JW$-algebra $A = \cal{N}_\sa$ is purely real. We only look the case where $A$ is a $\JW$-factor which is not isomorphic to the selfadjoint part of a von Neumann algebra and where $\cal{M}$ is finite. %We will investigate this topic in a subsequent publication . 

\begin{prop}
\label{Prop-analysis-good-centers}
Let $\cal{M}$ be a finite von Neumann algebra equipped with a faithful normal positive linear form $\varphi$. Let $Q \co \cal{M} \to \cal{M}$ be a $\varphi$-preserving faithful normal unital positive projection on a $\JW^*$-subalgebra $\cal{N}$ such that the associated $\JW$-algebra $A \ov{\mathrm{def}}{=} \cal{N}_\sa$ is a purely real factor which is not isomorphic to the selfadjoint part of a von Neumann algebra. Suppose $1 < p <\infty$. Then the range of the $\L^p$-extension $Q_p \co \L^p(\cal{M}) \to \L^p(\cal{M})$, $h_{\varphi}^{\frac{1}{2p}}xh_{\varphi}^{\frac{1}{2p}} \mapsto h_{\varphi}^{\frac{1}{2p}} P(x) h_{\varphi}^{\frac{1}{2p}}$  
is isometrically isomorphic to an interpolation space of the form $(\cal{N},\cal{N}_*)_{\frac{1}{p}}$.
\end{prop}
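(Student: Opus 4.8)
The plan is to realize the range of $Q_p$ as a complex interpolation space by passing through a tracial model and then applying the abstract interpolation lemma (Lemma \ref{Lemma-interpolation}). First I would exploit the structure hypothesis: since $A = N_\sa$ is a purely real $\JW$-factor, the real algebra $\kR(A)$ is a real von Neumann algebra with $\kR(A)\cap \i\kR(A)=\{0\}$, so there is a $*$-antiautomorphism $\alpha$ of order $2$ on $A''$ with $N = \{x\in A'' : \alpha(x)=x\}$, and the canonical projection $P_\can = \frac{\Id+\alpha}{2}\co A''\to A''$ is a normal unital positive projection onto $N$. Because $M$ is finite, $A''$ is finite (Remark \ref{Rem2}(iii)), so $A''$ carries a normal faithful finite trace $\tau_0$; averaging $\tau_0$ over $\{\Id,\alpha\}$ we may assume $\tau_0\circ\alpha=\tau_0$, and then $P_\can$ is $\tau_0$-preserving. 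By Proposition \ref{Prop-selfadjoint} (or directly, since $\alpha$ is $\tau_0$-preserving) $P_\can$ is selfadjoint with respect to $\tau_0$, hence its $\L^p$-extensions (in the Dixmier picture) are consistent contractive projections $P_{\can,p}\co \L^p_D(A'',\tau_0)\to \L^p_D(A'',\tau_0)$ for the whole scale $1\le p\le\infty$, with ranges nested compatibly.

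Second, I would transfer this picture from the first von Neumann algebra $A''$ onto $M$. The composition $Q\co M\to M$ has range $N\subset A''\subset M$; I want to compare $Q$ with $P_\can$ composed with a conditional expectation $M\to A''$. Here is the subtle point: $Q$ need not be selfadjoint with respect to $\varphi$ (this is emphasized right before Proposition \ref{Prop-selfadjoint}), so one cannot directly interpolate the $\L^p$-extensions of $Q$ on $M$. The way around this is to note that the \emph{range} $N$ is what matters: $\Ran Q_p$ as a subspace of $\L^p(M,\varphi)$ should coincide with $\Phi_p(\Ran P_{\can,p})$ where $\Phi_p\co \L^p_D(A'',\tau_0)\to \L^p(A'',\varphi|_{A''})\hookrightarrow\L^p(M,\varphi)$ is the order-isometric embedding of Lemma \ref{Lemma-identification} (applicable because $\varphi|_{A''} = \tau_0(d\,\cdot)$ for a suitable $d$, $A''$ being finite). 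One would check, using \eqref{Map-extension-Lp} and the compatibility of $\Phi_p$ with the $p$-th power / Mazur map structure, that $\Phi_p$ intertwines $P_{\can,p}$ with $Q_p$ up to the inclusion $A''\subset M$, so that $\Ran Q_p$ is isometrically the image under $\Phi_p$ of $\Ran P_{\can,p}$.

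Third, apply Lemma \ref{Lemma-interpolation} to the interpolation couple $(E_0,E_1) = (A'', \L^1_D(A'',\tau_0))$ (so $(E_0,E_1)_{1/p}=\L^p_D(A'',\tau_0)$ by the classical interpolation identification for tracial $\L^p$-spaces) and the contractive projection $P_\can$, which by construction satisfies $P_\can(E_i)\subset E_i$ with contractive restrictions. The lemma gives
\[
\Ran P_{\can,p} \;=\; (E_0,E_1)_{1/p}\cap \Ran P_\can \;\cong\; \big(A''\cap \Ran P_\can,\ \L^1_D(A'',\tau_0)\cap\Ran P_\can\big)_{1/p}
\]
isometrically. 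Now $A''\cap\Ran P_\can = N$ and $\L^1_D(A'',\tau_0)\cap\Ran P_\can$ is, via the trace pairing, exactly the predual $N_*$ (here one uses that $P_\can$ is $\tau_0$-selfadjoint, so its $\L^1$-range is the annihilator complement pairing with $N$, which is $N_*$; this is where the earlier remark that $N_*$ is the predual of the $\JW^*$-algebra $N$ is used). Combining with the isometry $\Phi_p$ from the previous step yields $\Ran Q_p \cong (N,N_*)_{1/p}$ isometrically.

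**The main obstacle.** The hard part will be step two: carefully justifying that $\Ran Q_p$ inside $\L^p(M,\varphi)$ really coincides with $\Phi_p(\Ran P_{\can,p})$ despite $Q$ failing to be $\varphi$-selfadjoint. One must check that the two $\L^p$-extension procedures — the Haagerup one for $(M,\varphi)$ and the Dixmier/tracial one for $(A'',\tau_0)$ — produce the \emph{same} closed subspace of $\L^p$, which requires that the weight $\psi = \varphi|_{A''}$ restricted to $N$ and the trace $\tau_0|_N$ induce, after the $h^{1/2p}\cdot h^{1/2p}$ sandwiching, compatible copies of $\L^p(N)$; this hinges on the fact that the range $N$ and the data $h$ are aligned (the support/centralizer conditions from Theorem \ref{main-th-ds-intro-bis}), and on a density argument via Lemma \ref{lemma2-GL}. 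Once this identification of ranges is in place, the interpolation step is formal.
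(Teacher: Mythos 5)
Your outline matches the paper's strategy in its tracial half (trace-invariance of $P_\can$ on the finite factor $A''$, selfadjointness via Proposition \ref{Prop-selfadjoint}, the Dixmier--Haagerup comparison of Lemma \ref{Lemma-identification}, identification of the $\L^1$-range with $N_*$ through the preadjoint, and Lemma \ref{Lemma-interpolation}), but the step you yourself flag as ``the main obstacle'' is precisely where your argument has a genuine gap, and you offer no mechanism to close it. The paper's proof hinges on the Haagerup--St\o rmer factorization theorem (\cite[Theorem 3.2]{HaS1}, \cite[Theorem 3.6]{Sto4}): since $A$ is reversible and purely real, there exists a (unique) faithful normal conditional expectation $\E \co M \to A''$ with $Q = P_\can \circ \E$. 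This single fact delivers everything you are missing: (i) $Q|_{A''} = P_\can$, so the canonical projection preserves $\varphi'' = \varphi|_{A''}$ --- without the factorization it is not clear that the restriction of your given projection $Q$ to $A''$ is the canonical one, nor that $P_\can$ preserves $\varphi''$; (ii) $\varphi = \varphi'' \circ Q = \varphi'' \circ \E$, whence by Takesaki's theorem $\sigma_t^{\varphi''}(A'') = A''$, which is exactly what legitimizes the isometric identification of $\L^p(A'',\varphi'')$ as a subspace of $\L^p(M,\varphi)$; your invocation of Lemma \ref{Lemma-identification} for the inclusion $\L^p(A'',\varphi|_{A''}) \hookrightarrow \L^p(M,\varphi)$ is off target, since that lemma compares the Dixmier and Haagerup pictures \emph{within one algebra} and says nothing about embedding the $\L^p$-space of a subalgebra without modular invariance; and (iii) $Q_p = P_{\can,p} \circ \E_p$ with $\E_p$ surjective onto $\L^p(A'',\varphi'')$, which gives $\Ran Q_p = \Ran P_{\can,p}$ on the nose, replacing the unproved ``intertwining up to the inclusion'' you propose to check by density. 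Without the factorization, the non-selfadjointness of $Q$ with respect to $\varphi$ (which you correctly identify as the obstruction) is not circumvented by saying ``the range is what matters'': you still have no argument that $\Ran Q_p$ lands inside the embedded copy of $\L^p(A'',\varphi'')$, nor that it coincides there with $\Ran P_{\can,p}$.

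A secondary, harmless deviation: you obtain $\tau_0 \circ \alpha = \tau_0$ by averaging over $\{\Id,\alpha\}$, whereas the paper gets it for free from uniqueness of the normalized trace on the finite factor $A''$ (the factoriality of $A''$ coming from pure realness of $A$); both work. But the core of the proof --- producing the $\varphi$-preserving conditional expectation $\E$ onto $A''$ and the identity $Q = P_\can \circ \E$, then transferring the problem to $P_\can$ --- is absent from your proposal, so as written it does not constitute a proof.
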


\begin{proof}
Note that $A \subset \cal{N} \subset A''$. So, in this paragraph, we can consider $Q$ as a map $Q \co \cal{M} \to A''$. We consider the normal faithful positive linear form $\varphi'' \ov{\mathrm{def}}{=} \varphi|A''$. Note that by Proposition \ref{Prop-center}, we have $\Zc(A)=(\Zc(A''))_\sa$. By Theorem \ref{Th-HaS95-3.2}, there exists a (unique) faithful normal conditional expectation $\E \co \cal{M} \to A''$ on the von Neumann algebra $A''$ such that $Q = P_\can \circ \E$ where the canonical projection $P_\can \co A'' \to A''$ on $\cal{M}$ is defined in Section \ref{Sec-Jordan}. We deduce that $Q|A''=P_{\can}$. %and that $\Ran Q|\U(A)=\Ran Q=A$. 
We infer that $P_\can$ preserves the linear form $\varphi''$. Now, we have
$$
\varphi
=\varphi'' \circ Q
=\varphi'' \circ P_\can \circ \E
=\varphi'' \circ \E.
$$
From Takesaki's theorem \cite[Theorem 10.1 p.~130]{Str81}, we deduce that $\varphi''$ is semifinite and that the von Neumann algebra $A''$ is invariant under the modular group, i.e.
$$
\sigma_t^{\varphi''}(A'')
=A'', \quad t \in \R.
$$
%By \cite[page 957]{JX}, 
We conclude that the noncommutative $\L^p$-space $\L^p(A'',\varphi'')$ can be naturally isometrically identified with a subspace of $\L^p(\cal{M})$. So it suffices to understand the range of the $\L^p$-extension of $P_{\can}$.

By Proposition \ref{prop-Ayu151}, the von Neumann algebra $A''$ is a factor. Since $\cal{M}$ is finite, the von Neumann subalgebra $A''$ is finite. By \cite[Theorem 8.2.8]{KaR97b}, we deduce that $A''$ admits a unique normalized finite trace $\tau$. Consider the $*$-antiautomorphism $\alpha \co A'' \to A''$ defined in Section \ref{Sec-Jordan}. Since $\tau \circ \alpha$ is also a normalized finite trace on $A''$, we conclude that $\alpha$ is trace preserving, that is $\tau \circ \alpha=\tau$. By linearity, it is immediate that the normal unital positive projection $P_\can \co A'' \to A''$ is also trace preserving. By Proposition \ref{Prop-selfadjoint} and Proposition \ref{Prop-Jordan-se=se}, note that $P_\can$ is selfadjoint with respect to $\tau$.

The (faithful) normal positive linear form $\varphi''$ can be written $\varphi''=\tau(d \,\cdot)$ where $d$ is an element of the Dixmier noncommutative $\L^1$-space $\L^{1,D}(A'',\tau)$. We denote by $h_{\varphi''}$ the density operator associated with $\varphi''$.  Now by Lemma \ref{Lemma-identification}, the linear map $A'' \mapsto \L^p(A'',\varphi'')$, $x \mapsto h_{\varphi''}^{\frac{1}{2p}} x h_{\varphi''}^{\frac{1}{2p}}$ extends to a positive isometric map $\Phi \co \L^{p,D}(A'',\tau) \to \L^p(A'',\varphi'')$ and we have the following commutative diagram.
$$
\xymatrix @R=1cm @C=2cm{
\L^p(A'',\varphi'') \ar[r]^{P_{\can,p}} &  \L^p(A'',\varphi'') \\
\L^{p,D}(A'',\tau) \ar[u]^{\Phi} \ar[r]_{P_{\can,p,D}}   &  \L^{p,D}(A'',\tau) \ar[u]_{\Phi} \\							
}
$$
Since $P_\can$ is selfadjoint, we know by \cite[p.~43]{JMX06} that $P_{\can,1,D} \co \L^{1,D}(A'') \to \L^{1,D}(A'')$ identifies to the preadjoint $(P_\can)_* \co (A'')_* \to (A'')_*$. By Proposition \ref{Prop-Fabian}, the range of the latter map is isometric to the predual $(\Ran P_\can)_*=\cal{N}_*$ of the range of $P_\can$. With Lemma \ref{Lemma-interpolation}, we deduce that the range of $P_{\can,p,D}$ is isometric to some interpolation space $(\cal{N},\cal{N}_*)_{\frac{1}{p}}$. Using the commutative diagram, we deduce the same thing for $P_{\can,p}$. The proof is complete.
\end{proof}

%\begin{remark} \normalfont
%In order to remove the tracial assumption, maybe we need to construct a faithful state preserved by $\alpha$ such that $\Ran \alpha$ is a subspace of the centralizer of the state $\varphi$. 
%\end{remark}

\begin{remark} \normalfont
It seems to the author that we can use direct theory to generalize to the case of a reversible purely real $\JW^*$-algebra $\cal{N}$ such that $A$ does not have direct summands isomorphic to the selfadjoint part of a von Neumann algebra. Probably, with the first case 1, we can obtain the case of a reversible purely real $\JW^*$-algebra $\cal{N}$.
\end{remark}

\paragraph{Case 3} We consider a normal positive faithful linear form $\varphi$ on a von Neumann algebra $\cal{M}$. We suppose that $Q \co \cal{M} \to \cal{M}$ is a $\varphi$-preserving normal unital faithful positive projection on a $\JW^*$-subalgebra $\cal{N}$ such that the associated $\JW$-algebra $A = \cal{N}_\sa$ is of type $\I_2$. We only look the case where $A$ generates the von Neumann algebra $\cal{M}$ (and where $\cal{M}$ has separable predual). It seems to the author that it is possible to obtain a slightly more general statement with \cite[Lemma 2.2]{HaS95} but we do not know if this result allows to remove definitively this probably unnecessary assumption.

\begin{prop}
\label{Prop-analysis-I2}
Let $\cal{M}$ be a von Neumann algebra with separable predual equipped with a faithful normal positive linear form $\varphi$. Let $Q \co \cal{M} \to \cal{M}$ be a faithful normal unital positive projection which preserves $\varphi$ on a $\JW^*$-algebra $\cal{N}$ such that the associated $\JW$-algebra $A \ov{\mathrm{def}}{=} \cal{N}_\sa$ is of type $\I_2$. Suppose that $A''=\cal{M}$. Then the range of the $\L^p$-extension $Q_p \co \L^p(\cal{M}) \to \L^p(\cal{M})$, $h_{\varphi}^{\frac{1}{2p}}xh_{\varphi}^{\frac{1}{2p}} \mapsto h_{\varphi}^{\frac{1}{2p}} P(x) h_{\varphi}^{\frac{1}{2p}}$ is isometrically isomorphic to an interpolation space of the form $(\cal{N},\cal{N}_*)_{\frac{1}{p}}$.
\end{prop}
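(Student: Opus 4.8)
The plan is to follow the proof of Proposition~\ref{Prop-analysis-good-centers}, the only new feature being that here $M$ is no longer assumed to be a factor. First, I would put $Q$ into canonical form. Since $A''=M$, the only normal faithful conditional expectation of $M$ onto $A''$ is $\Id_M$, so the Haagerup--St\o rmer factorization of positive projections \cite[Theorem~3.2]{HaS1} (in the refined form \cite[Lemma~2.2]{HaS1}, made available by the structure theory of type~$\I_2$ $\JW$-algebras) should produce a period-two $*$-antiautomorphism $\alpha$ of $M$ with $N=\{x\in M:\alpha(x)=x\}$ and
$$
Q=P_{\can}\ov{\mathrm{def}}{=}\frac{\Id+\alpha}{2}.
$$
This map is faithful (if $x\geq 0$ and $x+\alpha(x)=0$ then $x=-\alpha(x)\leq 0$, hence $x=0$), normal, unital and positive, so it is a Jordan conditional expectation by Proposition~\ref{Prop-fundamental}; moreover $\varphi\circ\alpha=\varphi$ because $\varphi\circ Q=\varphi$.

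Next, I would produce a suitable trace. The von Neumann algebra generated by a $\JW$-algebra of type~$\I_2$ is finite: by the structure theory it decomposes as a direct integral of finite type~$\I$ factors and copies of the hyperfinite $\II_1$ factor, and the separability of the predual of $M$ is precisely what makes this direct-integral decomposition available. Consequently $M$ admits a faithful normal tracial state, which after averaging over the order-two action of $\alpha$ may be taken $\alpha$-invariant; call it $\tau$. Then $\tau\circ P_{\can}=\frac12(\tau+\tau\circ\alpha)=\tau$, so $P_{\can}$ is a trace-preserving normal Jordan conditional expectation, and Proposition~\ref{Prop-selfadjoint} shows that it is selfadjoint with respect to $\tau$. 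Writing $\varphi=\tau(d\,\cdot)$ with $d\in\L^1_D(M,\tau)$, and using that $\L^p(M,\varphi)=\L^p(M)$ and $Q_p=P_{\can,p}$ since $A''=M$, Lemma~\ref{Lemma-identification} applied to $T=P_{\can}$ and $\tau_0=\tau$ yields a positive isometry $\Phi\co\L^p_D(M,\tau)\to\L^p(M)$ intertwining $P_{\can,p,D}$ with $P_{\can,p}$.

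Finally, the interpolation step is identical to Case~2. Since $P_{\can}$ is selfadjoint, $P_{\can,1,D}$ identifies with the preadjoint $(P_{\can})_*$ on $M_*$, and the range of the latter is the predual $N_*$ of $N=\Ran P_{\can}$ by \cite[5.10 page 148]{FHHMPZ}. Applying Lemma~\ref{Lemma-interpolation} to the interpolation couple $(M,M_*)$ (for which $\L^p_D(M,\tau)=(M,M_*)_{\frac1p}$, and which is left invariant by $P_{\can}$, resp. $(P_{\can})_*$, acting contractively on each endpoint), the range of $P_{\can,p,D}$ is isometrically isomorphic to $(N,N_*)_{\frac1p}$; transporting this along $\Phi$ identifies $\Ran Q_p$ with $(N,N_*)_{\frac1p}$, which is the assertion.

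The step I expect to be the main obstacle is the first one: extracting the $*$-antiautomorphism $\alpha$---equivalently, the identification $Q=P_{\can}$---and the finiteness of $M$ from the bare hypothesis that $A$ is of type~$\I_2$. This is exactly where the two extra assumptions ($A''=M$ and separable predual) are used, and it depends on the still incomplete structure theory of type~$\I_2$ $\JW$-algebras together with the Haagerup--St\o rmer theorem; once it is secured, the remainder of the proof is a routine repetition of the argument for Case~2.
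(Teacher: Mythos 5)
Your reduction of the type $\I_2$ case to the purely real machinery of Proposition \ref{Prop-analysis-good-centers} breaks down at exactly the step you flag, and it cannot be repaired along those lines. The factorization $Q=P_\can\circ\E$ with $P_\can=\frac{\Id+\alpha}{2}$ is a statement about the reversible (purely real) situation: the fixed-point set of a period-two $*$-antiautomorphism is always a \emph{reversible} $\JW$-algebra (it is closed under the symmetric products $a_1a_2\cdots a_k+a_k\cdots a_1$ because $\alpha$ reverses products), whereas a type $\I_2$ algebra is built from spin factors and is in general non-reversible -- indeed, in the St\o rmer decomposition the totally non-reversible summand is precisely of type 2. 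So for a non-reversible spin factor generating $M$ there is no $\alpha$ with $N=\{x\in M:\alpha(x)=x\}$, and $Q$ is not of the form $\frac{\Id+\alpha}{2}$; the fact that the only normal conditional expectation onto $A''=M$ is the identity does not convert \cite[Theorem 3.2]{HaS1} into such a canonical form here. Your finiteness step is also unjustified as stated: finiteness of $M=A''$ does not follow from the structure theory of $A$ alone, since the von Neumann algebra generated by an infinite-dimensional spin factor need not be finite (see the footnote in the paper's proof); in the paper it is deduced from \cite[Theorem 2.1]{HaS1}, which uses the existence of the faithful normal projection $Q$. Finally, with $\alpha$ gone, your device for producing a $Q$-invariant trace (averaging a tracial state over $\alpha$) disappears as well, and $\tau\circ Q$ is not a trace for a general Jordan conditional expectation, so the hypotheses of Proposition \ref{Prop-selfadjoint} are not secured by your argument.

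For comparison, the paper's route is: apply Stacey's theorem \cite[Theorem 2]{Sta} to write $A\cong\oplus_i\L^\infty_\R(\Omega_i,\mu_i,S_i)$ with $S_i$ spin factors (this is where separability of the predual enters), deduce finiteness of $A''$ from \cite[Theorem 2.1]{HaS1} and thereby identify the fibers $S_i''$ as matrix algebras or the hyperfinite $\II_1$ factor, decompose $Q$ as a direct integral of fiberwise projections onto $S_i$ (a result quoted from \cite{Arh2}), and then \emph{construct} the $Q$-invariant trace fiberwise as $(\tau_{S_i})_\C\circ Q_{i,\sa,\omega,\C}$ via \cite[Lemma 2.3]{HaS1}, where $\tau_{S_i}$ is the unique normalized trace of the spin factor. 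From that point on, your endgame (selfadjointness via Proposition \ref{Prop-selfadjoint}, the transfer Lemma \ref{Lemma-identification}, the identification of the range of the $\L^1$-extension with $N_*$ via \cite[5.10 page 148]{FHHMPZ}, and Lemma \ref{Lemma-interpolation}) coincides with the paper's and is fine -- but it only starts once a $Q$-invariant trace is in hand, which is precisely what your proposal does not supply.
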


%$h_{\varphi}^{\frac{1}{2p}}x h_{\varphi}^{\frac{1}{2p}} \mapsto h_{\varphi}^{\frac{1}{2p}} P(x) h_{\varphi}^{\frac{1}{2p}}$

\begin{proof}
By Example \ref{ex-type-II-JW}, there exist an index set $I$, a family $(\Omega_i)_{i \in I}$ of second countable locally compact spaces, a family of Radon measures $(\mu_i)_{i \in I}$ on the spaces $\Omega_i$ and a family $(S_i)_{i \in I}$ of spin factors, each of dimension at most countable (and strictly greater than 1) giving an isomorphism 
$$
A
=\oplus_{i \in I} \L^\infty_\R(\Omega_i,\mu_i,S_i).
$$ 
We need to take account of the generated von Neumann algebra $\cal{M}=A''$. So it is clear that we have a $*$-isomorphism $A''=\oplus_{i \in I} \L^\infty(\Omega_i,\mu_i,S_i'')$ for some \textit{concrete} spin factor $S_i$. By Theorem \ref{th-proj-finite}, the von Neumann algebra $A''$ is necessarily \textit{finite}, hence by \cite[Proposition 6.3.1 3)]{Li92} each summand of $A''$ is also finite. Note that by \cite[p.~68]{Sak98} we have a $*$-isomorphism $\L^\infty(\Omega_i,\mu_i,S_i'')=\L^\infty(\Omega_i,\mu_i) \otvn S_i''$. By \cite[Proposition 2.6.1]{Sak98}, we conclude that each von Neumann algebra $S_i''$ is finite. 

If $\cal{A}$ is the CAR algebra over the complex Hilbert space $\ell^2$, recall that by \cite[Theorem 6.2.2]{HOS84} the $\mathrm{C}^*$-algebra generated by the spin factor $S_i$ is $*$-isomorphic to
\begin{equation}
\label{}
\begin{cases}
\M_{2^{n-1}} \oplus \M_{2^{n-1}} & \text{ if } \dim S_i=2n \\
\M_{2^n} & \text{ if } \dim S_i=2n+1 \\
\cal{A} &\text{ if } \dim S_i= \infty
\end{cases}.
\end{equation}
In the case where $\dim S_i = \infty$, by \cite[Proposition 12.1.3]{KaR97b} and since $A''$ \textit{is finite}\footnote{\thefootnote. We warn the reader that the von Neumann algebra generated by an infinite dimensional spin factor is not necessarily a factor of type $\II_1$.}, the von Neumann algebra $S_i''$ is a factor of type $\II_1$ which is of course hyperfinite by definition \cite[p.~895]{KaR97b}. By \cite[Proposition 12.1.4]{KaR97b}, we can suppose that the spin factor $S_i$ is canonically embedding in the unique hyperfinite factor $\cal{R}$ of type $\II_1$ with separable predual. 

Now, it is easy to check that we have a $*$-isomorphism $A''=\oplus_i\L^\infty(\Omega_i,\mu_i,S_i'')$ where
\begin{equation}
\label{}
S_i''
\ov{\mathrm{def}}{=}
\begin{cases}
\M_{2^{n-1}} \oplus \M_{2^{n-1}} & \text{ if } \dim S_i=2n \\
\M_{2^n} & \text{ if } \dim S_i=2n+1 \\
\cal{R} &\text{ if } \dim S_i= \infty
\end{cases}
\end{equation}
and that with Lemma \ref{First-lemma} we can reduce the problem to the case of a faithful normal unital positive projection map 
$$
Q_i \co \L^\infty(\Omega_i,S_i'') \to \L^\infty(\Omega_i,S_i'')
$$ 
on $\L^\infty(\Omega_i,(S_i)_\C)$ which preserves a faithful normal positive linear form $\varphi_i$ where $(S_i)_\C$ is the $\JW^*$-algebra associated to $S_i$.

Note that by \cite[Corollary p.~178]{Dix81} we have a $*$-isomorphism $\L^\infty(\Omega_i,S_i'')=\int_{\Omega_i}^\oplus S_i'' \d\mu_i(\omega)$ (we can suppose that the support of $\mu_i$ is $\Omega_i$). With \cite[Lemma 14.1.19]{KaR97a} and \cite[Proposition 8.34 p.~285]{Tak02}), 
%, \cite[Theorem A.8]{Sut1}
we can decompose the form $\varphi_i$ as a direct integral $\varphi_i =\int_{\Omega_i}^\oplus \varphi_{i,\omega} \d \mu_i(\omega)$ of faithful normal positive linear forms $\varphi_{i,\omega}$ on the von Neumann algebra $S_i''$. We denote by 
$$
Q_{i,\sa} \co \L^\infty_\R(\Omega_i,(S_i'')_\sa) \to \L^\infty_\R(\Omega_i,(S_i'')_\sa)
$$ 
the restriction of $Q_i$ on the $\JW$-algebra $\L^\infty(\Omega_i,S_i'')_\sa=\L^\infty_\R(\Omega_i,(S_i'')_\sa)$ which is a projection on the $\JW$-algebra $\L^\infty_\R(\Omega_i,S_i)$. We can decompose $Q_{i,\sa}$ 
%$P \co \L^\infty_\R(\Omega,A) \to \L^\infty_\R(\Omega,A)$ as a direct integral $P=\int_{\Omega}^{\oplus} P_{\omega} \d \mu(\omega)$ of faithful normal projections on $A$ where $P_{\omega} \co A \to A$ preserves the restriction of $\varphi_{\omega}$ on $A$. 
%We can decompose $Q_{i,\sa} \co \L^\infty_\R(\Omega_i,\mu_i,(S_i'')_\sa) \to \L^\infty_\R(\Omega_i,\mu_i,(S_i'')_\sa)$ 
as a direct integral $Q_{i,\sa}=\int_{\Omega_i}^{\oplus} Q_{i,\sa,\omega} \d \mu_i(\omega)$ of faithful normal projections $Q_{i,\sa,\omega} \co (S_i'')_\sa \to (S_i'')_\sa$ on $S_i$ preserving the restriction of $\varphi_{i,\omega}$ on $(S_i'')_\sa$. By \cite[Lemma 4.4.13]{HOS84}, these maps are Jordan conditional expectations. By complexification, we deduce that $Q_i=\int_{\Omega_i}^{\oplus} Q_{i,\sa,\omega,\C} \d\mu_i(\omega)$ where $Q_{i,\sa,\omega,\C} \co S_i'' \to S_i''$. By Lemma \ref{Lemma-complexification-conditional-expectation}, note that $Q_{i,\sa,\omega,\C}$ is a Jordan conditional expectation onto $S_{i,\C}$.

%By \cite[Theorem 4.2]{HaS95} (by normalizing $\varphi_{i,\omega}$ as a state), we can consider a faithful normal projection $Q_\omega \co (S_i'')_\sa \to (S_i'')_\sa$ preserving the restriction of $\varphi_{i,\omega}$ on $S_i$. By \cite[Lemma 4.4.13]{HOS1}, this map is a Jordan conditional expectation. Observe that the well-defined map $\int_\Omega^{\oplus} Q_\omega \d \mu(\omega) \co \L^\infty_\R(\Omega_i,(S_i'')_\sa) \to \L^\infty_\R(\Omega_i,(S_i'')_\sa)$ is a faithful projection which preserves the form $\varphi_i$. By uniqueness, we infer that  

We equip the spin factor $S_i$ with its unique tracial state $\tau_{S_i}$, see Example \ref{ex-trace-spin}. If $(\tau_{S_i})_\C$ denote its complexification on its  associated $\JW^*$-algebra $(S_i)_\C$ then by \cite[Lemma 2.2]{HaS95}, the functional $\tau_i \ov{\mathrm{def}}{=} (\tau_{S_i})_\C \circ Q_{i,\sa,\omega,\C}$\footnote{\thefootnote. Here, we consider $Q_{i,\sa,\omega,\C}$ as a map $Q_{i,\sa,\omega,\C} \co S_i'' \to (S_i)_\C$.} is a normalized normal faithful trace on the von Neumann algebra $S_i''$. Note that
$$
\tau_i \circ Q_{i,\sa,\omega,\C}
=(\tau_{S_i})_\C \circ Q_{i,\sa,\omega,\C} \circ Q_{i,\sa,\omega,\C}
=(\tau_{S_i})_\C \circ Q_{i,\sa,\omega,\C}
=\tau_i,
$$
i.e. $Q_{i,\sa,\omega,\C}$ preserves $\tau_i$. We infer that $Q_i=\int_{\Omega_i}^{\oplus} Q_{i,\sa,\omega,\C} \d\mu_i(\omega)$ preserves the trace $\int_{\Omega_i} \ot \tau_i$. By Proposition \ref{Prop-selfadjoint} and Proposition \ref{Prop-Jordan-se=se}, note that $Q_i$ is selfadjoint with respect to the normal semifinite faithful trace $\int_{\Omega_i} \ot \tau$.

Each (faithful) normal positive linear form $\varphi_{i,\omega}$ can be written $\tau_i(d_{i,\omega}\, \cdot)$ where $d_{i,\omega}$ is an element of the Dixmier noncommutative $\L^1$-space $\L^{1,D}(S_i'',\tau_i)$. Introducing $d_i \ov{\mathrm{def}}{=}\int_{\Omega_i}^\oplus d_{i,\omega} \d \mu_i(\omega)$, we have
\begin{align*}
\MoveEqLeft
\varphi_i
=\int_{\Omega_i}^\oplus \varphi_{i,\omega} \d \mu_i(\omega)
=\int_{\Omega_i}^\oplus \tau_i(d_{i,\omega} \cdot) \d \mu_i(\omega) \\
&=\bigg(\int_{\Omega_i} \ot \tau_i\bigg)\bigg(\int_{\Omega_i}^\oplus d_{i,\omega} \d \mu_i(\omega)\, \cdot \bigg)
=\bigg(\int_{\Omega_i} \ot \tau_i\bigg)(d_i \, \cdot).            
\end{align*}
We denote by $h_{\varphi_i}$ the density operator associated with $\varphi_i$. Now by Lemma \ref{Lemma-identification}, we have a positive isometric map $\Phi \co \L^p(\Omega_i,\mu_i,\L^{p,D}(S_i'',\tau_i)) \to \L^p(\L^\infty(\Omega,\mu_i,S_i''),\varphi_i)$, $x \mapsto h_{\varphi_i}^{\frac{1}{2p}} x h_{\varphi_i}^{\frac{1}{2p}}$ and we have the following commutative diagram.
$$
\xymatrix @R=1cm @C=2cm{
\L^p(\L^\infty(\Omega_i,\mu_i,S_i'')),\varphi_i) \ar[r]^{Q_{i,p}} &  \L^p(\L^\infty(\Omega_i,\mu_i,S_i'')),\varphi_i) \\
\L^p(\Omega_i,\mu_i,\L^{p,D}(S_i'',\tau_i)) \ar[u]^{\Phi} \ar[r]_{Q_{i,p,D}}   &   \L^p(\Omega_i,\mu_i,\L^{p,D}(S_i'',\tau_i)) \ar[u]_{\Phi} \\					
}
$$
Since $Q_i$ is selfadjoint, we know by \cite[p.~43]{JMX06} that the map $Q_{i,1,D} \co \L^1(\Omega_i,\mu_i,\L^{1,D}(S_i'',\tau_i)) \to \L^1(\Omega_i,\mu_i,\L^{1,D}(S_i'',\tau_i))$ identifies to the preadjoint $(Q_i)_* \co (\L^\infty(\Omega_i,\mu_i,S_i''))_* \to (\L^\infty(\Omega_i,\mu_i,S_i''))_*$. By Proposition \ref{Prop-Fabian}, the range of the latter map is isometric to the predual $(\Ran Q_i)_*=\L^\infty(\Omega_i,\mu_i,S_{i,\C})_*$ of the range of $Q_i$. With Lemma \ref{Lemma-interpolation}, we deduce that the range of $Q_{i,p,D}$ is isometric to some interpolation space $(\L^\infty(\Omega_i,\mu_i,S_{i,\C}),\L^\infty(\Omega_i,\mu_i,S_{i,\C})_*)_{\frac{1}{p}}$. Using the commutative diagram, we deduce the same thing for $Q_{i,p}$. The proof is complete.
%Using \cite[page 43]{JMX06}, we know that the $\L^1$-extension map $Q_{i,1,D} \co \L^1(\Omega_i,\mu_i,\L^1_D(S_i'',\tau)) \to \L^1(\Omega_i,\mu_i\L^1_D(S_i'',\tau))$ identifies to the preadjoint $Q_{i,*} \co (\L^\infty(\Omega_i,S_i''))_* \to (\L^\infty(\Omega_i,S_i''))_*$. By \cite[5.10 page 148]{FHHMPZ01}, the range of the latter map is isometric to the predual $(\Ran Q_i)_*=(\L^\infty(\Omega_i,S_{i,\C}))_*$. With \cite[Lemma 4.8]{ArK23}, we conclude that the range of $Q_{i,p,D}$ is isometric to some interpolation space $(\L^\infty(\Omega_i,_mu_i,S_{i,\C}),\L^\infty(\Omega_i,\mu_i,S_{i,\C})_*)_{\frac{1}{p}}$. Using the commutative diagram, we deduce the same thing for $Q_{i,p}$. The proof is complete.
\end{proof}

%%%%%%%%%%%%%%%%%%%%%%%%%%%%%%%%%%%%%%%%%%%%%%%%%%%%%%%%%%%%%%%%%%%%%%%%%%%%%%%%%%%%%%
\section{Open questions}
\label{sec-open-questions}

It is natural to state the following conjecture in view of our results. In a next version of this preprint, we will hope finish the case where $\cal{M}$ is a finite von Neumann algebra.

\begin{conj}
\label{conj1}
Suppose $1 < p < \infty$ with $p \not=2$. Let $X$ be a Banach space. Then $X$ is isometric to a positively contractively complemented subspace of a Haageup noncommutative $\L^p$-space $\L^{p,H}(\cal{M},\varphi)$ where $\cal{M}$ is a $\sigma$-finite von Neumann algebra equipped with a normal faithful state $\varphi$ if and only if $X$ is isometric to a nonassociative $\L^p$-space associated with a $\JW^*$-algebra equipped with a normal faithful state.
\end{conj}

%The answer to the following problem is probably similar to the case of noncommutative $\L^p$-spaces since we expect that these spaces are precisely the 1-complemented subspaces of noncommutative $\L^p$-spaces. 

%\begin{prob}
%\label{conj1}
%Make a thorough study of geometric properties of nonassociative $\L^p$-spaces.
%\end{prob}

%The following question is very natural.
%
%\begin{quest}
%\label{quest-1}
%Suppose $1 < p < \infty$ with $p \not=2$. Is it true that each positively contractively complemented subspace of a tracial nonassociative $\L^p$-space is isometric to a nonassociative $\L^p$-space (not necessarily tracial)?
%\end{quest}
%
%Naively, we could ask more generally if the category of nonassociative $\L^p$-spaces and positive contractions is projectively stable in the sense of \cite[p.~295]{NeR11}. However, in this paper, we only define $\L^p(\cal{M})_+$ for \textit{tracial} nonassociative $\L^p$-spaces and the definition of $\L^p(\cal{M})_+$ in the non-tracial setting is unclear at the present moment.

%\begin{remark} \normalfont
 %If $\varphi$ is a normal faithful state on $\cal{M}$ which induces a normal finite faifthul trace on $A_+$ then the restriction of the norm \eqref{norm-H-varphi} in the case $p=2$ is the norm introduced in \cite[Definition 2]{Ioc86}.
%\end{remark}

%%%%%%%%%%%%%%%%%%%%%%%%%%%%%%%%%%%%%%%%%%%%%%%%%%%%%%%%%%%%%%%%%%%%%%%%%%%%%%%%%%%%%%%
%\section{From local to global: the $\sigma$-finite case}
%\label{From-local-to-global}
%%%%%%%%%%%%%%%%%%%%%%%%%%%%%%%%%%%%%%%%%%%%%%%%%%%%%%%%%%%%%%%%%%%%%%%%%%%%%%%%%%%%%%%

\vspace{0.2cm}

\textbf{Acknowledgment}.
The author acknowledges support by the grant ANR-18-CE40-0021 (project HASCON) of the French National Research Agency ANR. Finally, I would like to thank David Blecher, Bruno Iochum, Yves Raynaud, Eric Ricard for useful very short discussions and Quanhua Xu for providing me an expanded version of his paper \cite{HJX10} from his \textit{own} initiative. Finally, the author would like to thank Miguel Cabrera Garc{\'i}a and {\'A}ngel Rodr{\'\i}guez Palacios for providing some information on $\JBW^*$-algebras and $\JBW^*$-triples.

%%%%%%%%%%%%%%%%%%%%%%%%%%%%%%%%%%%%%%%%%%%%%%%%%%%%%%%%%%%%%%%%%%%%%%%%%%%%%%%%%%%%
\small

{\footnotesize

%\vspace{0.2cm}

\noindent C\'edric Arhancet\\ 
\noindent 13 rue Didier Daurat, 81000 Albi, France\\
URL: \href{http://sites.google.com/site/cedricarhancet}{https://sites.google.com/site/cedricarhancet}\\
cedric.arhancet@protonmail.com\\

}

\end{document}